\documentclass[a4paper]{article}

\usepackage{amsmath,amsfonts,amsthm,amssymb,graphicx,tikz-cd,stmaryrd}
\usepackage{hyperref}
\usepackage{aliascnt}
\usepackage{cleveref}

\theoremstyle{definition}
\newtheorem{theorem}{Theorem}[section]
\newaliascnt{definition}{theorem}
\newtheorem{definition}[definition]{Definition}
\aliascntresetthe{definition}
\newaliascnt{lemma}{theorem}
\newtheorem{lemma}[lemma]{Lemma}
\aliascntresetthe{lemma}
\newaliascnt{proposition}{theorem}
\newtheorem{proposition}[proposition]{Proposition}
\aliascntresetthe{proposition}
\newaliascnt{corollary}{theorem}
\newtheorem{corollary}[corollary]{Corollary}
\aliascntresetthe{corollary}
\newaliascnt{remark}{theorem}
\newtheorem{remark}[remark]{Remark}
\aliascntresetthe{remark}
\newaliascnt{example}{theorem}
\newtheorem{example}[example]{Example}
\aliascntresetthe{example}
\newaliascnt{problem}{theorem}

\aliascntresetthe{problem}

\crefname{lemma}{Lemma}{Lemmas}
\crefname{definition}{Definition}{Definitions}
\crefname{theorem}{Theorem}{Theorems}
\crefname{proposition}{Proposition}{Propositions}
\crefname{corollary}{Corollary}{Corollaries}
\crefname{remark}{Remark}{Remarks}
\crefname{example}{Example}{Examples}
\crefname{problem}{Problem}{Problems}

\Crefname{lemma}{Lemma}{Lemmas}
\Crefname{definition}{Definition}{Definitions}
\Crefname{theorem}{Theorem}{Theorems}
\Crefname{proposition}{Proposition}{Propositions}
\Crefname{corollary}{Corollary}{Corollaries}
\Crefname{remark}{Remark}{Remarks}
\Crefname{example}{Example}{Examples}
\Crefname{problem}{Problem}{Problems}

\crefalias{enumi}{cond}
\crefformat{cond}{#2 #1 #3}
\Crefformat{cond}{#2 #1 #3}

\crefformat{equation}{#2(#1)#3}
\Crefformat{equation}{#2(#1)#3}

\newcommand{\auth}{ \author{Rin Gotou \thanks{\texttt{u661233h@alumni.osaka-u.ac.jp}}}}
\newcommand{\zahl}{\mathbb{Z}}
\newcommand{\real}{\mathbb{R}}
\newcommand{\cpx}{\mathbb{C}}
\newcommand{\id}{\fct{id}}
\newcommand{\xto}[1]{\xrightarrow{#1}}
\newcommand{\oring}{\mathcal{O}}
\newcommand{\pone}{\mathbb{P}^1}
\newcommand{\proj}{\mathbb{P}}
\newcommand{\af}{\mathbb{A}}
\newcommand{\inc}{\hookrightarrow}
\newcommand{\ratmap}{\dashrightarrow}
\newcommand{\ol}[1]{\overline{#1}}
\newcommand{\fct}[1]{\operatorname{#1}}
\newcommand{\cat}[1]{\mathcal{#1}}
\newcommand{\shf}[1]{\mathcal{#1}}
\newcommand{\oshf}{\shf{O}}
\newcommand{\maxid}{\mathfrak{m}}
\DeclareMathOperator{\spec}{Spec}
\DeclareMathOperator{\mrat}{Rat}
\newcommand{\crat}{\ol{\fct{Rat}}}
\newcommand{\pmcrat}{\ol{\fct{M}_\pm \fct{Rat}}}
\newcommand{\glt}{\fct{GL}_2}
\newcommand{\slt}{\fct{SL}_2}
\newcommand{\pglt}{\fct{PGL}_2}
\newcommand{\gmult}{\mathbb{G}_m}
\newcommand{\eqn}[1]{\[ #1 \]}
\newcommand{\eqnl}[1]{\begin{equation} #1 \end{equation}}
\newcommand{\eqns}[1]{\begin{align*} #1 \end{align*}}
\newcommand{\eqnsl}[1]{\begin{align} #1 \end{align}}
\newcommand{\eqnslg}[2]{\begin{equation} #2 \begin{aligned} #1 \end{aligned} \end{equation}}

\DeclareMathOperator{\ord}{ord}
\DeclareMathOperator{\chow}{Chow}
\DeclareMathOperator{\supp}{supp}
\DeclareMathOperator{\red}{red}
\DeclareMathOperator{\diff}{diff}
\DeclareMathOperator{\spe}{sp}
\DeclareMathOperator{\val}{val}
\DeclareMathOperator{\sgn}{sgn}
\DeclareMathOperator{\chara}{char}
\DeclareMathOperator{\stab}{stab}
\DeclareMathOperator{\depth}{depth}
\DeclareMathOperator{\conf}{Conf}
\DeclareMathOperator{\cc}{CC}
\newcommand{\ponean}{\proj^{1,an}}
\newcommand{\fsfld}{ K }
\newcommand{\fib}[1]{\mathcal{F}_{#1}}
\newcommand{\chquo}{\sslash_{Ch}}
\newcommand{\edgecap}[6]{%
\draw (#1) -- (#2)
node[pos=0.15,sloped,#6,font=\scriptsize] {#3}
node[pos=0.5, sloped,#6,font=\scriptsize] {#4}
node[pos=0.85,sloped,#6,font=\scriptsize] {#5};
}

\title{Chow Quotient Moduli Space of Dynamical Systems on the Projective Line and Rescaling Limits}
\auth
\date{}

\begin{document}
\maketitle
\begin{abstract}
We study Chow-quotient compactifications of the moduli space of degree-$d$ rational self-maps of the projective line. We compute the degree of every three-dimensional orbit cycle in the Clebsch--Gordan compactification in terms of the main component and the hole depths. As consequences, we obtain the generic orbit class and a rational map on Chow quotients induced by iteration. We also introduce a marked Chow quotient designed to record collections of rescaling limits, describe it by decorated trees, and extend the fixed-point multiplier map to the normalization of the unmarked Chow quotient.
\end{abstract}
\section{Introduction}
Let $k$ be an algebraically closed field of $\chara k = 0$ and $d \geq 2$ an integer. The moduli space of the dynamical systems $\fct{rat}_d := \mrat_d/\pglt$ is the quotient of the parameter space $\mrat_d$ of rational self-maps of degree $d$ on $\pone$ with respect to the conjugation action of $G := \operatorname{Aut}(\pone ) = \pglt$.

As in moduli spaces of other objects, boundary points are expected to classify degenerations. In a dynamical setting, rescaling limits (\cite{kiwi2015rescaling}) provide an appropriate notion of degeneration.
For a meromorphic family of dynamical systems $\varphi_t(z)$ regarded as a $\cpx((t))$-point of $\mrat_d$, a rescaling limit is a rational map $\psi$ of degree at least two obtained as
\[ \psi=\lim_{t\to0}(\gamma_t^{-1}\circ\varphi_t^q\circ\gamma_t),\qquad q\geq1, \]
for a parametrized conjugation $\gamma_t\in G(\cpx((t)))$.
For a given family $\varphi_t(z)$, the combination of dynamical systems that appear as rescaling limits is summarized as a dynamical system on \emph{a tree of spheres} (\cite{fujimura-taniguchi2013rational}, \cite{Arfeux2017DynTreeSphere}).
From the Berkovich perspective, one obtains many constraints on combinations of renormalizations (\cite{DeMarco2005IterAtBoundary}, \cite{DeMarcoFaber2014DegenCpxDynSys}, \cite{kiwi2015rescaling}, \cite{Rumely17}).

A basic problem is to classify possible combinations of degenerations in a single moduli space. The algebraic compactifications constructed by geometric invariant theory (\cite{silverman1996p1moduli}, \cite{schmitt2017cptf_stmaps}) do not by themselves retain enough boundary data to describe every combination of rescaling limits (\cite{kiwi-nie2023indet-loci}).

The Deligne-Mumford moduli space $\ol{M}_{0,n}$ of the stable rational curves is a successful example of controlling combinations of different degenerations, which parameterizes trees of spheres: projective lines connected by points with marked points.

Analogous moduli spaces for trees of spheres with dynamical systems are obtained by marking points: fixed points in \cite{fujimura-taniguchi2013rational}, three or more points together with their inverse images in \cite{Arfeux2017CptfTreesOfSpheresCovers}, and dynamical portraits in \cite{DoyleSilverman2020ModuliSpDynSysPortraits}. These markings track the degeneration near each point. However, applying intersection-theoretic tools requires an analysis of their embeddings into moduli spaces of the form $\ol{M}_{0,n}$. More universal boundary constructions using Berkovich geometry have recently been proposed (\cite{FavreGong2025non}, \cite{Roy2025ActionGroupeCptfHybarXiv}); their boundary points retain substantially more non-Archimedean information than the scaling data considered here.

We focus on the Chow quotient construction 
\[ \ol{M}_{0,n} = (\pone )^{n} \chquo G \]
from \cite{kapranov1993ChowQuotGrassI}. After reviewing the general definition and properties of the Chow quotient in \Cref{sec:ChowQuot}, we define the moduli space 
\[ Y_d := \crat_d \chquo G, \]
which parametrizes fixed-point rescalings (the case $q=1$ of Kiwi's rescalings) as well as degenerating rescalings with certain hole configurations. It is equipped with rational iteration maps.

The perturbation--translation--specialization (PTS) principle of \cite{YiHu2005ChowQuotTop} explains how components of a Chow limit can be detected after passing to a valued field. In the present setting, perturbation means lifting to a generic point over a valued extension, while translation and specialization correspond to conjugation and reduction. We use only the component-generic form stated in \Cref{prop:PTS}.

The computation of cycle degrees of $G$-orbits in $\crat_d$ is crucial. We obtain the orbit degree from two fundamental invariants of $[F]\in\crat_d$: the main component $F^{\mathrm{main}}$ and the hole depth $\depth_P[F]$ at each $P\in\pone$, in the terminology of \cite{DeMarco2005IterAtBoundary} (see also \cite{kiwi2015rescaling}).
\begin{theorem}[\Cref{cor:OrdDepth}, \Cref{thm:degorbit}] Let $[F] \in \crat_d$ satisfy $\dim G\cdot[F]=3$. Write $\ol{G\cdot[F]}_{\mathrm{red}}$ for the reduced orbit closure and
\[ \ol{G\cdot[F]}:=(\#\stab_G[F])\ol{G\cdot[F]}_{\mathrm{red}} \]
for the orbit cycle. Their degrees as the algebraic cycles of $\crat_d \simeq \proj^{2d+1}$ satisfy
\[ \deg \ol{G \cdot [F]} = (\#\stab_G[F])\deg \ol{G \cdot [F]}_{\mathrm{red}}
= (d^3-d) -  \sum_{P \in \pone} \delta_{d+1}(c_P),  \]
where
\eqns{
    \delta_{d+1}(c) = c(c-1)(3d+1-2c),\ c_P = \fct{depth}_P[F] + \begin{cases}1 & (\text{if }F^{\text{main}}(P) = P ), \\
    0 & (\text{otherwise}). \end{cases}
}
\end{theorem}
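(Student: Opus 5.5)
The degree of the orbit cycle $\ol{G\cdot[F]}$ in $\proj^{2d+1}$ is the number of points $g\in G$ with $g\cdot[F]\in L$, for a generic codimension-3 linear subspace $L$. Each point of $G\cdot[F]\cap L$ is hit $\#\stab_G[F]$ times, which accounts for the stabilizer factor. So I would compute the pullback of three generic hyperplanes along the orbit map
\[ \mu_F\colon \proj(M_2)\dashrightarrow \proj^{2d+1},\qquad A\mapsto A\cdot F = A\circ F\circ \operatorname{adj}(A), \]
which extends the conjugation action to all $2\times 2$ matrices. The components of $\mu_F$ are bihomogeneous of degree $d+1$ in the entries of $A$: degree $1$ from $A$ and degree $d$ from $\operatorname{adj}(A)$. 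So $\mu_F$ is given by a linear system of degree $d+1$ on $\proj^3=\proj(M_2)$. If it had no base locus, the degree would be $(d+1)^3$. The correct answer is
\[ (d+1)^3-\bigl[(d+1)^3-(d^3-d)\bigr] = d^3-d, \]
minus the corrections. The plan is therefore to identify the base scheme of $\mu_F$ and compute its contribution via excess intersection, or equivalently via a blow-up resolution.

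The base locus lies on the quadric $Q=\{\det A=0\}\cong\pone\times\pone$, whose points are rank-one matrices $A=v\otimes w^\vee$. For such $A$, $\operatorname{adj}(A)$ is also rank one, and $A\circ F\circ\operatorname{adj}(A)$ is the constant map with value $[v]$, with coefficient $\langle w, F(\ker\text{-direction})\rangle$. So $\mu_F$ restricted to $Q$ is a degenerate map to the locus of constant maps. It vanishes along the curve where $w^\vee$ annihilates $F$ evaluated at the image point of $\operatorname{adj}(A)$, i.e. where the relevant point $P$ satisfies $F^{\text{main}}(P)=P$ (the ``$+1$'' in $c_P$). The holes of $F$ contribute further vanishing: at a hole $P$ of depth $\depth_P[F]$, the common factor $H$ of $F$ vanishes to order $\depth_P$ there.

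The generic (hole-free) computation should first establish $d^3-d$, by checking that the base scheme is $Q\cap\{\text{degree } d \text{ generic contribution}\}$ with a uniform excess of $(d+1)^3-(d^3-d)=3d^2+3d+1+d$. The extra contributions are then localized at finitely many points $P$: for each $P$, the pair $(\ker A,\operatorname{im}A)$ sits over $P$. At such a point the components of $\mu_F$ lie in $\maxid^{c_P}$ in suitable local coordinates, with the multiplicity pattern weighted along $Q$. I would compute the local excess by a weighted blow-up, or by reducing to a monomial/toric model in the two local coordinates transverse to the fiber plus the coordinate $\det A$. The local contribution should come out as the cubic polynomial $\delta_{d+1}(c)=c(c-1)(3d+1-2c)$. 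This can be cross-checked from the Segre class of a scheme defined by ideals like $(x,y)^c+(\det)$-type data in a degree-$(d+1)$ system, since $\delta_{d+1}$ vanishes for $c=0,1$ as it must. Finally, summing over all $P$ gives the formula. Alternatively, I would verify the formula by degeneration: $\deg$ is constant in flat families of orbit closures, so I could check it on test maps such as $F=(X^{d-c}H, \ldots)$.

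The main obstacle is the local excess computation at a point $P$ with $c_P\ge 2$. One has to show that the base scheme there is exactly of the predicted form, with no embedded contributions, and that its Segre-class contribution equals $c(c-1)(3d+1-2c)$ independently of finer data of $F$ near $P$. I would try to carry this out by normalizing coordinates so that $P=0$ and writing $F=H\cdot F^{\text{main}}$. I would then resolve $\mu_F$ by blowing up the point of $Q$ over $P$ successively $c_P$ times, tracking how the self-intersection of the pulled-back hyperplane class drops at each step.
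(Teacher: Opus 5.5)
Your proposal stops where the proof actually begins. The base scheme of $\mu_F$ and its excess contribution are asserted (``should come out as $\delta_{d+1}(c)$'') rather than computed, and the geometry you sketch for that step is not correct.

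\emph{Shape of the base locus.} For rank-one $A$ one has $\operatorname{im}\operatorname{adj}(A)=\ker A$, and $A\circ F\circ\operatorname{adj}(A)$ vanishes exactly when $f_+$ vanishes at $\ker A$. So the base locus is the union of the lines $L_P=\{A\in Q:\ker A=P\}$ for $P\in V(f_+)$, with $\operatorname{im}A$ free along $L_P$. Consequently:
\begin{itemize}
\item it is one-dimensional already for generic $F$;
\item it consists of lines of a single ruling of $Q$, so it is not of the form $Q\cap S$ for a surface $S$;
\item there is no distinguished ``point of $Q$ over $P$'' to blow up.
\end{itemize}
When $f_+=0$ (the identity map with holes, which the theorem includes), $\det A$ divides every component of $\mu_F$ and all of $Q$ is base locus. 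Your sketch does not treat this case.

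\emph{What each line must contribute.} The line $L_P$ has to account for $(3d+1)\ord_Pf_++\delta_{d+1}(c_P)$; compare $q_d(a,b)$ in the proof of \Cref{thm:degorbit}. The scheme structure producing this depends on exactly the finer data you propose to ignore:
\begin{itemize}
\item for a simple fixed point without hole it is the reduced line, dropping $3(d+1)-2=3d+1$;
\item for a double fixed point of $F^{\mathrm{main}}$ without hole ($c_P=1$) it is the double structure $2L_P\subset Q$, locally $(\det A)+I_{L_P}^2$, dropping $2(3d+1)$;
\item holes contribute further powers of $I_{L_P}$.
\end{itemize}
Resolving this requires blow-ups along $L_P$, then along curves in the exceptional divisors, together with a proof that the residual system becomes base-point free. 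This has to be done for every combination of depth and fixed-point multiplicity, and none of it is in the proposal.

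\emph{The fallback.} Checking by degeneration is not a proof either. Orbit closures do not form a flat family across loci where some $c_P$ jumps (the degree drops there), so test maps only verify instances.

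\emph{Your route could work, but the case analysis is the proof.} It is the Aluffi--Faber method for binary forms. Write $F=\prod_P\ell_P^{m_P}\cdot F^{\mathrm{main}}$. The coefficient ideal of $(\ell_P\circ\operatorname{adj}A)^{m}$ is $I_{L_P}^{m}$, so the depth enters as a power of $I_{L_P}$, and a fixed point of $F^{\mathrm{main}}$ adds one more. This is how $c_P$ in its depth form would emerge. For example, for a hole of depth $m$ at a non-fixed point of $F^{\mathrm{main}}$, one blow-up along $L_P$ with class $(d+1)H-mE$ leaves a base-point-free system near $L_P$ and gives the drop
\[
3(d+1)m^2-2m^3=(3d+1)m+\delta_{d+1}(m).
\]
The remaining cases (multiple fixed points of $F^{\mathrm{main}}$ at holes, $f_+=0$) still have to be worked out.

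\emph{How the paper does it.} The paper sidesteps base schemes entirely:
\begin{itemize}
\item It lifts $[F]$ to the marked $\pone$-bundle $\pmcrat_d\to(\pone)^{2d}$ by factoring $f_\pm$.
\item It uses $(\xi-D_+)(\xi-D_-)=0$ to write $\xi^3$ in terms of $H_iH_jH_k$ and $\xi H_iH_j$.
\item It evaluates these on the orbit closure by $3$-transitivity and by a one-parameter torus orbit whose degree is the weight-interval length $(d+1)-c_i-c_j$ (\Cref{lem:IntersecNum}). Thus $c_P=\min(\ord_Pf_+,\ord_Pf_-+1)$ appears as an extreme torus weight.
\item The rest is a symmetric-function identity together with \Cref{cor:OrdDepth}.
\end{itemize}
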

For $[F]\in\mrat_d$, the stabilizer is finite and the correction terms vanish. Thus
\[ \deg\ol{G\cdot[F]}_{\mathrm{red}}=\frac{d^3-d}{\#\fct{Aut}(F)},
\qquad \deg\ol{G\cdot[F]}=d^3-d. \]
In particular, the orbit-cycle morphism extends across the locus of maps with nontrivial automorphisms and induces an injective morphism $\fct{rat}_d\to Y_d$; see \Cref{cor:autfreeChow}.
The stabilizer-weighted degree $d(d+1)(d-1)$ for maps without holes also appears in \cite[Theorem 1.2]{Deopurkar2026EquivClassOrbCloGL2}. Here we compute the correction terms at the boundary and use the orbit cycles to study Chow quotients and iteration.
\begin{corollary}[\Cref{cor:iterdeg}] The iteration map on the parameter space 
\[ \Phi_n : \crat_d \ratmap \crat_{d^n} : \varphi  \mapsto \varphi ^n \qquad (n\geq2) \]
induces a rational map
\[ \Psi_n : Y_d \ratmap Y_{d^n}. \]
\end{corollary}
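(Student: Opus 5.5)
The map $\Psi_n$ will be built on the dense open subset of $Y_d$ coming from $\fct{rat}_d$, and we then check that it comes from a rational map of varieties. Recall that the Chow quotient $Y_d=\crat_d\chquo G$ is the closure, inside the Chow variety $\chow(\crat_d)$, of the image of the orbit-cycle map $[F]\mapsto \ol{G\cdot[F]}$ restricted to a dense open subset of generic orbits.

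\emph{Step 1 (iteration is equivariant).} The iteration $\Phi_n$ is a rational map $\crat_d\ratmap\crat_{d^n}$ that is regular on $\mrat_d$, because $\varphi^n$ has exact degree $d^n$ there. It is $G$-equivariant for the conjugation action: $(\gamma^{-1}\varphi\gamma)^n=\gamma^{-1}\varphi^n\gamma$. Hence $\Phi_n(G\cdot\varphi)=G\cdot\varphi^n$.

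\emph{Step 2 (orbit cycles go to orbit cycles).} For $\varphi\in\mrat_d$ with trivial automorphism group, the orbit $G\cdot\varphi^n$ is again three-dimensional, since $\stab_G(\varphi^n)$ is finite for $\varphi^n\in\mrat_{d^n}$. By the Theorem (more precisely \Cref{cor:autfreeChow} applied in degree $d^n$), the cycle $\ol{G\cdot\varphi^n}$ has degree $(d^{n})^3-d^n$, the generic orbit degree for $Y_{d^n}$. So the assignment
\[ \ol{G\cdot\varphi}\;\mapsto\;\ol{G\cdot\varphi^n} \]
sends a point of the open part of $Y_d$ to a point of the correct component of $\chow(\crat_{d^n})$, and that point lies in $Y_{d^n}$.

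The multiplicity needs some care. $\stab(\varphi^n)$ may be larger than $\stab(\varphi)$, but the cycle $\ol{G\cdot\varphi^n}$ is defined with the stabilizer weight. This weight is exactly what the Theorem shows makes the degree constant, so the image always lands in the closure of the generic orbit cycles.

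\emph{Step 3 (algebraicity).} We need the composite $U\to\chow(\crat_{d^n})$ to be a morphism on a dense open $U\subset\fct{rat}_d$. The plan is to use the universal family. Over $U':=\mrat_d^{\mathrm{aut\text{-}free}}$, the family $\{\ol{G\cdot\varphi^n}\}$ is the closure of the image of the action map
\[ G\times U'\to\crat_{d^n}\times U',\qquad (\gamma,\varphi)\mapsto(\gamma^{-1}\varphi^n\gamma,\varphi). \]
Shrinking $U'$ makes this family flat with constant degree, by generic flatness together with Step 2. By the universal property of the Chow variety (char $0$, families of cycles over a normal base) it induces a morphism $U'\to\chow(\crat_{d^n})$. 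This morphism is $G$-invariant, so it descends to $U'/G$, an open subset of $\fct{rat}_d$.

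Via the injective morphism $\fct{rat}_d\to Y_d$ of \Cref{cor:autfreeChow}, which is an open immersion on a dense open subset (after possibly passing to normalization; in characteristic $0$ birationality suffices), we obtain a rational map $Y_d\ratmap \chow(\crat_{d^n})$. Its image lies in $Y_{d^n}$ because each image point is the orbit cycle of a generic point. This map is $\Psi_n$.

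The main obstacle I expect is Step 3: checking that $U'/G\to Y_d$ is birational onto its image, so that a rational map from $Y_d$ is really defined, and checking that the constructed family is a well-defined family of cycles. The first point should follow from injectivity in \Cref{cor:autfreeChow} together with characteristic $0$ (generic smoothness). The second point is where the degree formula is essential: constant degree ensures that the flat-limit closure carries no extra components on a dense open.
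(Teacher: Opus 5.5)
Your proposal is correct. Its core is the same as the paper's: apply \Cref{thm:degorbit} (via \Cref{cor:autfreeChow} in degree $d^n$) to see that, for generic $\varphi$, the orbit cycle $\ol{G\cdot\varphi^n}$ has degree $d^{3n}-d^n$, which is the generic orbit-cycle class of $\crat_{d^n}$.

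\textbf{Where the routes differ.} The difference is in the algebraicity step.
\begin{itemize}
\item The paper records $\delta(G,\Phi_n(\crat_d))=\delta(G,\crat_{d^n})$ and invokes \Cref{prop:morphChquo}. So $\Psi_n$ is by definition the cycle pushforward $\Phi_{n*}$, which is what later feeds \Cref{cor:iterorbit} through \Cref{prop:ChowPushRat}.
\item You build the map by hand: a family of orbit cycles over the automorphism-free locus, the universal property of the Chow variety, descent to $\fct{rat}_d$, and transport to $Y_d$ through $\iota_d$. The map $\iota_d$ is injective and dominant, hence birational in characteristic $0$.
\end{itemize}
Your route effectively proves the special case of \Cref{prop:morphChquo} needed here, which the paper states without proof. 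The price is that you need $\fct{rat}_d$ and the injectivity part of \Cref{cor:autfreeChow} in degree $d$ as extra input. The two constructions give the same map, since
$\Phi_{n*}\ol{G\cdot\varphi}=\#\stab_G(\varphi)\,[\stab_G(\varphi^n):\stab_G(\varphi)]\,\ol{G\cdot\varphi^n}_{\mathrm{red}}=\ol{G\cdot\varphi^n}$.

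\textbf{Two points to tighten.}
\begin{itemize}
\item \emph{Multiplicity in Step 3.} The reduced closure of the image of $G\times U'$ has fibers $\ol{G\cdot\varphi^n}_{\mathrm{red}}$, not the weighted cycle you stressed in Step 2. Either check that $\stab_G(\varphi^n)$ is generically trivial, or weight the family by its generic order. Equivalently, use the cycle pushforward of $G\times U'$.
\item \emph{Membership in $Y_{d^n}$.} This does not follow from ``correct degree'' alone. Nor is $\ol{G\cdot\varphi^n}$ ``the orbit cycle of a generic point'': $\Phi_n(\crat_d)$ is a thin subvariety of $\crat_{d^n}$. It comes from \Cref{cor:autfreeChow} in degree $d^n$. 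A limit of generic orbit cycles containing $\varphi^n$ in its support dominates $\ol{G\cdot\varphi^n}$ by \Cref{prop:stabilizermultiplicity}, and equality of degrees then forces equality of cycles.
\end{itemize}
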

An ingredient of the PTS principle is orbit-wise iteration on the moduli spaces. We recall from \cite{DeMarco2005IterAtBoundary} that the indeterminacy locus of $\Phi_n$ is independent of $n$ and is
\[ I(d) := \{ F \in \crat_d \mid F^{\mathrm{main}} \text{ is the constant map }F^{\mathrm{main}}(z) = c,\ \fct{depth}_c F \geq 1 \}. \]
Each point $Z$ of the Chow quotient $Y_d$ indicates a finite formal sum of reduced $G$-orbit closures, counted with their Chow multiplicities. The following gives component-wise compatibility of supports with the iteration rational map.
\begin{corollary}[\Cref{cor:iterorbit}]
    Let $(Z,Z')$ be a point of the graph variety $\Gamma_{\Psi_n} \subset Y_d \times Y_{d^n}$ of $\Psi_n$. If a point $[F] \in \crat_d \setminus I(d)$ is included in the family of conjugacy classes which the point $Z$ indicates, then $\Phi_n ([F])$ is included in the family of conjugacy classes which the point $Z'$ indicates.
\end{corollary}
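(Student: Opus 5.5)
We argue by specialization along a one-parameter family, comparing Chow limits before and after iteration. Since $(Z,Z')\in\Gamma_{\Psi_n}$, and $\Gamma_{\Psi_n}$ is the closure of the graph of $\Psi_n$ over a dense open set $U\subset Y_d$ where $\Psi_n$ is defined, we choose a smooth pointed curve $(C,0)$ and a morphism $C\to\Gamma_{\Psi_n}$ sending $0\mapsto(Z,Z')$ and the generic point $\eta$ into the graph over $U$. Shrinking $U$, we may assume that its points are orbit cycles $\ol{G\cdot\varphi_\eta}$ of maps $\varphi_\eta\in\mrat_d$, and that $\Psi_n$ sends such a cycle to $\ol{G\cdot\varphi_\eta^n}$. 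This compatibility follows from \Cref{cor:iterdeg}: $\Phi_n$ is $G$-equivariant, and on $\mrat_d$ the orbit degrees are $d^3-d$ and $d^{3n}-d^n$ by the degree theorem. After a finite base change, we work over the complete DVR $R=\widehat{\oshf}_{C,0}$ with fraction field $K$, so that $\varphi_\eta$ becomes a $K$-point $\varphi$ of $\mrat_d$.

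\textbf{Step 1.} Apply the component-generic PTS principle (\Cref{prop:PTS}) to $Z$. Let $[F]$ lie in the support of $Z$ with $[F]\notin I(d)$. Pick a component $\ol{G\cdot[F_0]}$ of $Z$ containing $[F]$, with $[F_0]$ generic in that component. PTS gives a finite extension $K'/K$ and $\gamma\in G(K')$ such that $\gamma^{-1}\circ\varphi\circ\gamma$ specializes to $[F_0]$. Since $F$ lies in the closure of $G\cdot[F_0]$, a further curve-selection argument lets us reach $[F]$ itself. Concretely, we take a two-parameter degeneration, or compose $\gamma$ with a family $g_s\in G$ satisfying $g_s\cdot F_0\to F$, and then pass to a rank-one valuation on a larger field. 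This yields $\gamma'$ with $\gamma'^{-1}\circ\varphi\circ\gamma'$ reducing to $[F]$.

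\textbf{Step 2.} Since $[F]\notin I(d)$, $\Phi_n$ is defined at $[F]$, and hence continuous there in the valuative sense. The reduction of $\Phi_n(\gamma'^{-1}\varphi\gamma')=\gamma'^{-1}\varphi^n\gamma'$ is therefore $\Phi_n([F])$.

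\textbf{Step 3.} The family $\gamma'^{-1}\varphi^n\gamma'$ lies in the orbit $G\cdot\varphi^n$ over $K'$, whose orbit cycle is the $\eta$-point of the second factor. The Chow limit of these cycles at $0$ is $Z'$. Any specialization of a $K'$-point of the generic orbit closure lies in the support of the limit cycle, because the total family of cycles over $C$ is closed in $C\times\crat_{d^n}$ and the Chow limit is its flat-limit fibre, set-theoretically. Hence $\Phi_n([F])\in\supp Z'$, so it lies in some $G$-orbit closure in the family indicated by $Z'$.

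The main obstacle is Step 1. PTS in its component-generic form only reaches generic points of components of $Z$, while $[F]$ may be a special boundary point of the orbit closure, possibly with a positive-dimensional stabilizer. I would first try to avoid this by working with the total space: let $\mathcal{Z}\subset C\times\crat_d$ be the closure of the family of orbit cycles, and $\mathcal{Z}'$ the analogous family in $C\times\crat_{d^n}$. On the generic fibre, $\Phi_n$ maps $\mathcal{Z}_\eta$ into $\mathcal{Z}'_\eta$. Hence $\mathrm{id}\times\Phi_n$, defined away from $C\times I(d)$, maps $\mathcal{Z}\setminus (C\times I(d))$ into the closed set $\mathcal{Z}'$ by continuity. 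Restricting to the fibre over $0$ then gives the claim for every $[F]\in\supp Z\setminus I(d)$ directly. In this form the PTS step is needed only to identify the supports of the fibres with the supports of $Z$ and $Z'$, which is exactly the defining property of Chow limits.
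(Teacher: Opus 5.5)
Your proposal is correct once it rests on the total-space argument of your last paragraph, and that argument follows a genuinely different route from the paper.

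In your argument, $(\mathrm{id}\times\Phi_n)^{-1}(\mathcal Z')$ is closed in $C\times(\crat_d\setminus I(d))$ and contains $\mathcal Z_\eta\setminus(C\times I(d))$. Since $\mathcal Z$ is the closure of $\mathcal Z_\eta$, it therefore contains $\mathcal Z\setminus(C\times I(d))$. The only real input is that the special fibres of the closed families are $\supp Z$ and $\supp Z'$. That is the compatibility of Chow forms with specialization, which the paper itself invokes in the proof of \Cref{prop:PTS}.

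The paper argues as follows:
\begin{enumerate}
\item It fixes an irreducible component $W\ni[F]$ of $\supp Z$, and uses \Cref{thm:Ydatum} and surjectivity of $\Pi_*$ to see that $W$ has a dense three-dimensional orbit $G\cdot[F_W]$ with finite stabilizer.
\item It notes $[F_W]\notin I(d)$, because $I(d)$ is closed and $G$-invariant.
\item It applies \Cref{prop:ChowPushRat} (a PTS lift specializing exactly to $[F_W]$, then regularity of $\Phi_n$ there) to get $\Phi_n([F_W])\in\supp Z'$.
\item It reaches $[F]$ by equivariance, density of $G\cdot[F_W]$ in $W$, continuity of $\Phi_n$ on $W\setminus I(d)$, and closedness of $\supp Z'$.
\end{enumerate}
That last density step is the simple remedy for the obstacle you flagged in Step~1: no valued lift specializing to the special point $[F]$ itself is needed. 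Your two-parameter degeneration is only sketched; it could be made rigorous, e.g.\ by substituting $s=t^{1/M}$ with $M\gg0$, but it is superfluous.

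As for what each route buys: yours works purely with supports. It never needs to know that the components of $Z$ are orbit closures with finite stabilizer, so it bypasses the structure theorem altogether. The paper's route through \Cref{prop:ChowPushRat} fits its PTS framework and also gives the coefficient statement $\ol{G\cdot\Phi_n([F_W])}\leq Z'$ whenever $\Phi_n([F_W])$ has finite stabilizer. A support-level continuity argument does not provide that.
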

Furthermore, as an analogue of the moduli spaces in \cite{fujimura-taniguchi2013rational}, the normalization of $Y_d$ has the fixed point multiplier morphism.
\begin{theorem}[\Cref{thm:multwelldef}] Let $\nu_d:Y_d^\nu\to Y_d$ be the normalization. The fixed point multiplier map $\Lambda_{1,d} : \fct{rat}_d \to \af^{d+1}$ extends to a morphism $\ol{\Lambda}_d^\nu : Y_d^\nu \to \proj^{d+1}$.
\end{theorem}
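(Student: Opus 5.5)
Our plan is to define $\ol{\Lambda}_d^\nu$ first on the open dense subset $\fct{rat}_d\subset Y_d$ (embedded via \Cref{cor:autfreeChow}) as the composite of $\Lambda_{1,d}$ with $\af^{d+1}\subset\proj^{d+1}$. We then show that the resulting rational map $Y_d^\nu\ratmap\proj^{d+1}$ has no indeterminacy. The coordinates of $\Lambda_{1,d}$ are the elementary symmetric functions $\sigma_1,\dots,\sigma_{d+1}$ of the $d+1$ fixed-point multipliers. We also need the relation among them (the holomorphic index formula $\sum 1/(1-\lambda_i)=1$). The natural target is then the weighted or ordinary projective space in which we record $[1:\sigma_1:\dots:\sigma_{d+1}]$. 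For points at infinity, following Fujimura--Taniguchi, the limit is the ratio of the dominant $\sigma_j$'s.

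Since $Y_d^\nu$ is normal, Zariski's main theorem reduces the problem to a valuative statement. The rational map extends to a morphism exactly when, for every point $Z\in Y_d^\nu$, all limits of $\Lambda_{1,d}$ along formal arcs $\spec k[[t]]\to Y_d^\nu$ with generic point in $\fct{rat}_d$ and special point $Z$ give the same point of $\proj^{d+1}$. Equivalently, the closure of the graph has finite fibres over $Z$, and hence singleton fibres by normality and connectedness of the fibres. We will therefore verify that this limit depends only on $Z$, or more precisely only on the Chow cycle $\nu_d(Z)$ together with the local branch.

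The key step uses the PTS principle of \Cref{prop:PTS}. An arc gives a family $\varphi_t\in\mrat_d(k((t)))$, and after conjugation by suitable $\gamma_t\in G(k((t)))$ each component $\ol{G\cdot[F_i]}$ of the limit cycle $\nu_d(Z)=\sum m_i\ol{G\cdot[F_i]}_{\mathrm{red}}$ is realized as a reduction $F_i=\lim\gamma_t^{-1}\varphi_t\gamma_t$. Each fixed point of $\varphi_t$ has a valuation-theoretic position. After conjugating, it lands as a fixed point of some $F_i^{\mathrm{main}}$ that is not a hole, or in a hole region. Its multiplier then tends to the multiplier of $F_i^{\mathrm{main}}$ at that fixed point, or diverges at an order controlled by the hole depths. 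The degree formula of \Cref{thm:degorbit}, through the terms $c_P$, records exactly which fixed points of $F^{\mathrm{main}}$ are absorbed into holes. We will use it to show that the components of $Z$ account for all $d+1$ fixed points. The leading-order behaviour of each $\sigma_j$ is then a product of finite multipliers read off from the $F_i^{\mathrm{main}}$ together with divergence orders determined by the depths. The resulting projective point $[1:\sigma_1:\cdots:\sigma_{d+1}]$ is a function of the cycle data alone.

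The main obstacle is the case where every component of $Z$ has many fixed points inside holes, so that several $\sigma_j$ diverge at comparable rates. In that case the limiting point could depend on subleading coefficients not visible in $Z$. We will first try to rule this out with the index formula, which forces cancellations, together with the observation that the divergence rates are determined by the Chow multiplicities. If this fails on $Y_d$ itself, the normality of $Y_d^\nu$ should separate the branches of $Y_d$ along which different limits occur. Continuity on each branch then follows from the valuative criterion applied branch by branch.
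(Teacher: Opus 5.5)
Your outer framework matches the paper's: on the normal variety $Y_d^\nu$, \Cref{prop:indetlocus} reduces the claim to showing that all valued arcs through a given point have the same limit. But the step that carries all the content, namely that this limit depends only on the point, is not supplied, and your plan aims at the wrong difficulty. The target is $\proj^{d+1}=\fct{Sym}_{d+1}\pone$, and $(\pone)^{d+1}\to\fct{Sym}_{d+1}\pone$ is a morphism. So, after a finite extension over which the fixed points are defined, the arc limit of the multiplier map is just the multiset of reductions $\lim_t\lambda_j(t)\in\pone(k)$ of the individual multipliers. Divergence rates and subleading coefficients never enter. Your \emph{main obstacle} is therefore an artifact of working with the $\sigma_j$; it would matter for a weighted projective target, which is not what is asked. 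Your fallbacks would not close a genuine problem either. The index relation is a single linear condition and cannot determine a multiset. Normality does not split the arcs through one point $\widetilde y\in Y_d^\nu$ into branches: a normal point is unibranch, and if two arcs through it have different limits, it lies in the indeterminacy locus. Arguing \emph{branch by branch} is therefore circular.

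The real missing idea is a canonical rule for the scale at which each multiplier's limit is read off, together with a proof that the resulting multiset is an invariant of the limit cycle. Your dichotomy (the multiplier of $F_i^{\mathrm{main}}$, or divergence governed by the depths) is not correct, because the limit is read off at the scale where the fixed point is isolated from the other marked roots. If $f_+$ specializes to zero there (contour $+$, as in near-identity degenerations), the limit is $1$; if $f_-$ does, it is $-d$. A fixed point lying in a hole together with other marked roots has its limit determined at the finer scale where it separates from them, and there it is often finite. The value $\infty$ arises only when, at that scale, the point is a depth-one hole not fixed by the main component. Nor does \Cref{thm:degorbit} allocate fixed points to components.

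The paper resolves this by passing to the marked Chow quotient $X_d$, in which the fixed points and the roots of $f_-$ are leaves of the Chow-datum tree. Each $\ol{\lambda}_i$ is a morphism on $X_d$ (\Cref{cor:markChowQuotMultiplier}), evaluated at the vertex adjacent to $p(i)$, where $P_i$ is a simple root of $f_+$ or $f_-(P_i)\neq0$. An arc of $Y_d^\nu$ lifts through the proper surjection $\Pi_*:X_d\to Y_d$, so its limit is $\sum_i[\ol{\lambda}_i(x)]$ for a special point $x$. \Cref{lem:simpleroot} and the $\Theta$-bookkeeping then show that this equals $(d+1)[1]+\Theta(\Pi_*x)$, which depends only on the unmarked cycle. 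The weights $1/\#\stab_G[F]$ match the Chow multiplicities, and components killed by $\Pi_*$ are accounted for. Hence the value depends only on $\nu_d(\widetilde y)$, and normality enters only through the converse in \Cref{prop:indetlocus}. Without an analogue of this assignment-and-invariance argument, your proposal does not establish the extension.
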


For the analysis of $Y_d$, we take a marking extending fixed point marking in \cite{fujimura-taniguchi2013rational}: a representational marking from the $\pglt$-equivariant isomorphism (\cite{west2015moduli}, the assumption $\chara k = 0$ is used in this isomorphism)
\[ \crat_d \simeq \proj (V_{d+1} \oplus V_{d-1} ), \]
where $V_{n} = H^0(\pone , \oshf (n)) = k[x_0 ,x_1]_n$.
The marking comes from the factorization of these representational divisors, 
\eqns{ \pmcrat_d := \left\{ \begin{aligned}
    ((h_1 ,\ldots , h_{2d}), [f_+ : f_-]) \in \proj (V_1)^{2d} \times \crat_d \ \ \  \\ : \prod_{i = 1}^{d+1} h_i \text{ divides } f_+,\ \prod_{i = d+2}^{2d} h_i \text{ divides }f_-
\end{aligned} \right\} }
and take the Chow quotient 
\[  X_d := \pmcrat_d \chquo G .\]
From the $\pglt$-equivariant projections
\[ \Pi : \pmcrat_d \to \crat_d \text{ and } \pi_B : \pmcrat_d \to (\pone)^{2d}, \]
we obtain pushforward morphisms on the Chow quotients 
\[ \Pi_* : X_d  \to Y_d \text{ and }(\pi_B)_* : X_d \to \ol{M}_{0,2d}. \]
By considering the images of $(\pi_B)_*$, we obtain the stratification of $X_d$.
\begin{theorem}[\Cref{cor:stratavariety}, \Cref{thm:Ydatum}] The decomposition stratification of the Chow quotient $X_d := \pmcrat_d \chquo G$ admits an explicit refinement by subvarieties indexed by the strata trees of \Cref{def:stratatree}. Moreover, its points are in bijection with the Chow data of \Cref{def:Chowdatum}.
\end{theorem}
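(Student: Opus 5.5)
The statement is an introductory summary of \Cref{cor:stratavariety} and \Cref{thm:Ydatum}, so I would prove it in two parts. The refinement comes from the pushforward $(\pi_B)_*:X_d\to\ol{M}_{0,2d}$. The bijection comes from applying the PTS principle (\Cref{prop:PTS}) component by component.

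\textbf{Step 1: pull back the stratification of $\ol{M}_{0,2d}$.} The boundary stratification of $\ol{M}_{0,2d}$ is indexed by stable trees with $2d$ leaves. For a point $Z\in X_d$, its image $(\pi_B)_*Z$ is a stable curve whose components correspond to vertices of such a tree. Each component of $Z$ is a reduced orbit closure of $\pmcrat_d$. I would record which leaves $h_i$ collide in each component, and which factor ($f_+$ or $f_-$) each leaf lies in. Together with the combinatorial type of the main map on each vertex, this gives a decoration of the tree; these decorated trees are the strata trees of \Cref{def:stratatree}.

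\textbf{Step 2: show each stratum is a locally closed subvariety.} For a fixed strata tree, the locus in $X_d$ is the preimage under $(\pi_B)_*$ of an open boundary stratum of $\ol{M}_{0,2d}$, intersected with conditions on the component orbits. I would express this locus as a product, over vertices, of torus-type Chow quotients of the factorized divisors. Equivalently, it is the image of a morphism from a product of lower-dimensional parameter spaces, which is constructible. Closedness of its closure inside the decomposition stratum would follow from the degree formula \Cref{thm:degorbit}: the degrees of the pieces must add up to the generic degree $d^3-d$. That constraint pins down the multiplicities, so the refinement is by subvarieties.

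\textbf{Step 3: the bijection with Chow data.} I would define the map $Z\mapsto$ Chow datum by reading off, at each vertex, the orbit of the rescaled map together with its hole depths. To show it is well defined and surjective, I would apply \Cref{prop:PTS}. Lift $Z$ to a generic point over a valued field, conjugate by $\gamma_t$ adapted to each vertex (translation), and reduce (specialization). This recovers each component, and conversely lets me build a family realizing any prescribed datum. Injectivity holds because a Chow point is determined by its cycle, and the datum lists exactly the components with multiplicities.

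\textbf{Main obstacle.} I expect the hardest part to be the multiplicity bookkeeping: showing that the Chow multiplicities of the components are exactly the ones the datum predicts, and that no extra components appear. My first attempt would be a degree count. I would compare $\sum_{P}\delta_{d+1}(c_P)$ across the vertices with the total degree, and conclude that the data-derived components already exhaust the cycle.
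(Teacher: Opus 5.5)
Your outline for the bijection (lift $Z$ to a generic $K$-point, conjugate, specialize, and get injectivity from the unique decomposition of an effective cycle) matches the paper. However, the step you single out as the main obstacle would fail as proposed. \Cref{thm:degorbit} computes $\xi^3$ on orbit cycles, i.e.\ $\deg\Pi_*$. On $X=\pmcrat_d$ the class $\xi=\Pi^*\oshf(1)$ is not ample: $\Pi$ has positive-dimensional fibers (copies of $(\pone)^{d-1}$) over $\{f_-=0\}$. Moreover, by \Cref{prop:vancycle} there are three-dimensional orbit closures with $\Pi_*=0$, and these genuinely occur as vertex cycles (e.g.\ a vertex with $\sigma=-$ seeing only two distinct $+$ points). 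So a comparison of $d^3-d$ with $\sum_P\delta_{d+1}(c_P)$ cannot exclude extra components of $Z$, and it does not give local closedness of strata either.

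What is needed is the full class $\delta(G,X)\in A^{2d-2}(X)$, computed against the basis $H_iH_jH_k$, $\xi H_iH_j$ of $A^3(X)$ (\Cref{lem:IntersecNum}, \Cref{cor:gendegOrbit}). You then check that the vertex cycles of every Chow datum sum to it (\Cref{lem:Chowdatumclass}): three leaves have a unique median vertex, and along the path from $p(i)$ to $p(j)$ the intervals $[c_{i,v},d+1-c_{j,v}]$ concatenate. Together with the lower bound $\ol{G\cdot x}\leq Z$ of \Cref{prop:stabilizermultiplicity}, the effective difference has degree zero against an ample class and vanishes.

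The tree in Step~1 is also not the strata tree. Reading it off $(\pi_B)_*Z\in\ol M_{0,2d}$ loses the components of $Z$ with $f_+f_-\neq0$ lying over a two-dimensional orbit $B_A$ with $d(A)\neq0$. These are killed by $(\pi_B)_*$ and are exactly the valence-two vertices of condition~(c) (the points $z_i$ in the degree-two examples). The decoration is likewise not a combinatorial type of the main map; it is the sign of an order difference.

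The missing key lemma is \Cref{lem:orddifftree}. The function $h_x(\xi)=\ord\diff(\gamma\cdot[f_+:f_-])$ is piecewise linear on the Berkovich tree $T_x$ with slope $(W_e(v')-W_e(v))/2$. This single fact shows that $\sgn h_x$ is a contour, that the three-dimensional specializations form a finite subset of $T_x$, and that besides branch points they occur at zeros of $h_x$ in edge interiors. Those zeros are points that ``translation adapted to each vertex'' of the configuration tree would never visit.

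Finally, ``build a family realizing any prescribed datum'' hides a real step. Once positions and $c_\pm$ are chosen so that $h_x$ realizes $\sigma$, the aspect constants on different zero-contour components are not independent. The paper adjusts them branch by branch with the twist of \Cref{cor:ShiftConst}. The same identification of aspect constants across zero--zero edges is what produces the factor $\gmult^{\alpha(\Gamma,\sigma)}$ in \cref{eq:stratavariety}, rather than one torus per vertex. The embedding of each stratum comes from the addition map with \Cref{lem:ChowFactorization} and \Cref{lem:degreeoneChowSlice}, not from a degree count.
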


The remainder of the paper is organized as follows.
\Cref{sec:preliminaries} collects preliminaries on trees, valuative criteria, and the Berkovich projective line.
\Cref{sec:ChowQuot} reviews Chow quotients and establishes the component-generic PTS principle and compatibility with equivariant maps.
\Cref{sec:dynsysPone} introduces the Clebsch--Gordan coordinates and representational markings, describes orbit boundaries, and computes marked orbit classes.
\Cref{sec:stratification} describes the marked Chow quotient by decorated trees and proves the regularity of the marked fixed-point multiplier maps.
\Cref{sec:unmarked} studies the unmarked Chow quotient, including the multiplier map on its normalization, isomers, orbit degrees, and rational iteration maps.
\Cref{sec:examples} illustrates the tree description in degree two.

\subsection*{Acknowledgements}
This work was supported by a JSPS Grant-in-Aid for JSPS Fellows (Grant No.~25KJ0090). The author is deeply grateful to Y\^usuke Okuyama for suggesting the moduli-theoretic problem that motivated this work and for helpful comments. The manuscript was prepared with assistance from ChatGPT (GPT-5.6) \cite{OpenAI2026ChatGPT} and Prism \cite{OpenAI2026Prism}, both developed by OpenAI.

\section{Preliminaries}
\label{sec:preliminaries}
\subsection{Trees}

We denote the set $\{ 1, \ldots , n \}$ by $[n]$. For a map $f : S \to T$, we define the fiber partition of $S$ induced by $f$ by $\fib{f} := \{ f^{-1}(t) \subset S \mid t \in f(S) \}$.

\emph{A (simple) graph} is a tuple $\Gamma = (V(\Gamma) , E(\Gamma ))$ of a set $V(\Gamma )$ and a subset $E(\Gamma )$ of $\binom{V(\Gamma )}{2} := \{ S \subset V(\Gamma ) \mid \# S = 2 \}$. An element of the set $V(\Gamma )$ (resp. $E (\Gamma )$) is called \emph{a vertex} (resp. \emph{an edge}) of the graph. For a vertex $v$, \emph{the valence} (also called the degree) of $v$ is the number $\fct{val}(v) := \# \{ e \in E(\Gamma ) \mid v \in e \}$. \emph{A path} on a graph is a finite sequence $(v_0,...,v_n)$ of vertices such that $\{ v_{i-1},v_i\}\in E(\Gamma )$ for each $i\in [n]$. A path $(v_0,\ldots , v_n )$ is called \emph{simple} if $\# \{ v_0 ,\ldots , v_n \} = n+1$. A graph $\Gamma$ is called a \emph{tree graph} if any pair $(v,v')$ of distinct vertices has a unique simple path such that $(v_0=v,v_1, \ldots , v_n = v')$.

A leaf of a tree graph $\Gamma$ is a vertex $v \in V(\Gamma )$ such that $\val (v) = 1$, and we write $L(\Gamma ) := \{ v \in V(\Gamma ) \mid \fct{val}(v) = 1 \}$ for the set of the leaves. A leaf-marked tree is a tuple $(\Gamma ,p)$ of a tree graph $\Gamma$ and a bijection $p : S \to L(\Gamma )$.

We also recall some definitions of $\real$-tree from \cite[1.4]{BakerRumely2010PotTheoryDynOnBerkoP1}. An $\real$-tree is a metric space $(T,d)$ such that any two points $x,y$ are joined by a unique geodesic segment denoted by $[x,y]$, which is homeomorphic to the closed interval. A point $x$ on an $\real$-tree $T$ is called \emph{ordinary} if $T\setminus \{x\}$ has two connected components; otherwise, it is called a \emph{branch point}. For a non-accumulating set $S$ of points and half-lines (represented by its limit points) on an $\real$-tree $T$ including all branch points, the tuple $( S, \{ \{ x, y\} \mid [x,y] \cap S = \{x,y \} \} )$ is a tree graph we denote by $\Gamma (T)$. Conversely, a tree graph $\Gamma$ with an assignment $d : E ( \Gamma )  \to  ( 0 , \infty ]$ of lengths for edges gives an $\real$-tree $T(\Gamma ,d )$, here $d(e) = \infty$ is admitted if $e \cap L(\Gamma ) \neq \emptyset$, which corresponds to half-lines of $\real$-trees. We write $T(\Gamma )$ for the underlying topological space of $T(\Gamma , d)$ for any $d$ of finite values.

For a topological space $X$, we write $\cc (X)$ for the connected components of $X$, equipped with the canonical projection $X \to \cc (X)$.

Let $\Gamma $ be a tree and $p : S \to V(\Gamma )$ a map (for example, the leaf-marked tree $(\Gamma , p : S \to L(\Gamma ))$). For each edge $e = \{ v, v' \} \in E(\Gamma )$, \emph{the edge-contracted marking} $p_e : S \to \{ v, v' \}$ is the composition of the canonical maps
\[ S \to V(\Gamma ) \to T(\Gamma \setminus \{ e\} ) \to \cc ( T(\Gamma \setminus \{ e\} ) ) \simeq \{ v,v' \}. \] Similarly, for each vertex $v \in V(\Gamma )$, \emph{the vertex-contracted marking} $p_v : S \setminus p^{-1}(v) \to E_v := \{ e \in E \mid v \in e \}$ is defined by
\[ S\setminus p^{-1}(v) \to V(\Gamma) \setminus \{ v\}  \to T ( \Gamma \setminus \{ v \} ) \to \cc ( T(\Gamma \setminus \{ v \} )) \simeq E_v . \]
We call the fiber set $\fib{p_e}$ (resp. $\fib{p_v}$) \emph{the edge (resp. vertex) decomposition} of $p$ with respect to $e$ (resp. $v$).

We can reconstruct a leaf-marked tree from its edge decompositions, or its vertex decompositions.
\begin{definition} Let $S$ be a set.
    A \emph{split} of $S$ is an unordered pair $\{A_1,A_2\}$ of nonempty disjoint subsets such that $A_1\sqcup A_2=S$.
\end{definition}

\subsection{Valuative Criterion and Rational Map}

For valuation tests, let $K=k((t))$ or a finite valued extension of it. Thus $K$ is a complete discretely valued field with residue field $k$. We use the extension of the norm given by
\[ |a|=1\quad(a\in k^\times),\qquad |t^n|=\exp(-n),\qquad v_K=-\log|\cdot|. \]
Put $A:=\oring_K$ and let $\maxid_A$ be its maximal ideal.

The valuative criterion of properness says that, for any proper algebraic variety $X$ over $k$, any morphism $x : \spec K \to X$ (equivalently, a $K$-point of $X$) extends uniquely to a morphism $\widetilde{x} : \spec A \to X$. The specialization of $x$ is the composition morphism 
\[ \text{(we denote by) } \spe x : \spec A/\maxid_A \to \spec A \xto{\widetilde{x}} X . \]

For a projective $k$-variety viewed as a compactification of an open subvariety, we can use specializations of $K$-points to represent the points on the boundary as follows.
\begin{proposition} \label{prop:valucriterion} Let $X$ be a projective variety over $k$ and $U$ a dense open subvariety of $X$. For any point $x \in X(k)$, there exists $x_t \in U(K)$ such that $\spe x_t = x$.
\end{proposition}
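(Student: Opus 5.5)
The plan is to reduce to the case of a curve through $x$ and then use the valuative criterion on that curve. Since $U$ is dense open in the irreducible variety $X$, the complement $Z := X\setminus U$ is a proper closed subset. If $x\in U$, we take the constant point $x_t := x\in U(k)\subset U(K)$. Its unique extension to $\spec A$ is the constant morphism, so $\spe x_t = x$. From now on assume $x\in Z$.

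\textbf{Step 1: a curve through $x$ meeting $U$.} We find an irreducible curve $C\subset X$ with $x\in C$ and $C\cap U\neq\emptyset$.
\begin{itemize}
\item Since $X$ is projective, we may choose a point $y\in U(k)$ and embed $X\subset\proj^N$.
\item If $\dim X = 1$, take $C=X$.
\item Otherwise, cut $X$ by $\dim X-1$ general hyperplane sections passing through both $x$ and $y$. By a Bertini-type argument (or the standard lemma that any two points of an irreducible variety lie on an irreducible curve, e.g.\ Mumford's \emph{Abelian Varieties}, \S 6, Lemma), we get an irreducible curve $C$ containing $x$ and $y$.
\item Then $C\cap U$ is a nonempty open subset of $C$, hence dense, and $C\cap Z$ is finite.
\end{itemize}

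\textbf{Step 2: a formal arc at $x$.} Let $\tilde C\to C$ be the normalization, and pick a point $\tilde x\in\tilde C(k)$ lying over $x$; this is possible because normalization is finite and surjective. The local ring $\oshf_{\tilde C,\tilde x}$ is a DVR with residue field $k$. Choose a uniformizer $t$. Since $\chara k=0$ and $k$ is algebraically closed, Cohen's structure theorem gives
\[ \widehat{\oshf}_{\tilde C,\tilde x}\simeq k[[t]]. \]
The resulting map $\spec k[[t]]\to\tilde C\to C\subset X$ sends the closed point to $x$. Its generic point maps to the generic point of $C$, which lies in $U$ because $C\cap U$ is dense in $C$. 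Composing with $\spec k((t))\to\spec k[[t]]$ gives $x_t\in U(K)$ for $K=k((t))$. For a finite extension of $k((t))$, we pull back along the inclusion $k((t))\subset K$.

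\textbf{Step 3: identifying the specialization.} The morphism $\spec A\to X$ constructed in Step 2 extends $x_t$. By the uniqueness part of the valuative criterion for the proper variety $X$, it is \emph{the} extension $\widetilde{x_t}$. Therefore $\spe x_t=x$.

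The main obstacle is Step 1: producing an irreducible curve through the boundary point $x$ that is not contained in $Z$. If $X$ is singular at $x$, one cannot simply take a general hyperplane section through $x$ naively. I would handle this by blowing up $X$ at $x$, or by invoking the cited lemma of Mumford, which works for arbitrary irreducible varieties. An alternative avoids curves altogether: the local ring $\oshf_{X,x}$ is dominated by a DVR of the function field $k(X)$ with residue field $k$ (take a divisorial valuation centered at $x$ after blowing up and normalizing, then restrict to a curve). I consider the curve argument the cleanest.
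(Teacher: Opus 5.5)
Your proof is correct. The paper states this proposition without proof, but its own lifting steps in the proofs of \Cref{prop:indetlocus} and \Cref{prop:PTS} follow your Steps 1--3: take an integral curve through the point whose generic point lies in the open set, normalize it, and complete at a point lying over the point. Only your closing aside is off, since a divisorial valuation centered at $x$ has residue field of transcendence degree $\dim X-1$ over $k$ and so does not give a $k((t))$-point; the curve argument you actually use does not depend on it.
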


A rational map $\Phi:X\ratmap Y$ is represented by a morphism $\Phi_U:U\to Y$ on a dense open subvariety $U\subset X$, with representatives identified when they agree on their overlap. We identify $\Phi$ with its unique maximal representative on $U(\Phi)$ and call $I(\Phi):=X\setminus U(\Phi)$ its indeterminacy locus.

The graph closure describes the possible valuative limits of a rational map.
\begin{proposition} \label{prop:indetlocus}
Let $X$ be a projective variety over $k$, let $Y$ be a proper variety over $k$, and let $f:X\ratmap Y$ be a rational map. Fix a dense open subvariety $U\subset U(f)$, and let $\Gamma_f\subset X\times Y$ be the closure of the graph of $f|_U$, with projections
\[ p:\Gamma_f\longrightarrow X,\qquad q:\Gamma_f\longrightarrow Y. \]
For $x\in X(\ol{k})$, the set $q(p^{-1}(x)(\ol{k}))$ is precisely the set of specializations
\[ \left\{\spe f(x_L)\ \middle|
\begin{aligned}
&L/K\text{ is a finite valued-field extension},\\
&x_L\in U(L),\quad \spe x_L=x
\end{aligned}\right\}. \]
In particular, if $f$ is regular at $x$, this set is the singleton $\{f(x)\}$. Conversely, if $X$ is normal and $p^{-1}(x)$ has a single geometric point, then $f$ is regular in a neighborhood of $x$. Thus, when $X$ is normal, $x\in I(f)$ if and only if two such valued lifts of $x$ have distinct specializations under $f$.
\end{proposition}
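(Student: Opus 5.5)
We prove the two directions separately, working throughout with the closure $\Gamma_f$ of the graph of $f|_U$ and the projections $p,q$. Since $X$ is projective and $Y$ is proper, $X\times Y$ is proper over $k$, so $\Gamma_f$ is proper. The graph of $f|_U$ is dense in $\Gamma_f$ and isomorphic to $U$ via $p$.

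\emph{Step 1: every valuative specialization lies in $q(p^{-1}(x))$.} Let $L/K$ be a finite valued extension and $x_L\in U(L)$ with $\spe x_L=x$. Then $(x_L,f(x_L))$ is an $L$-point of the graph of $f|_U$, hence an $L$-point of $\Gamma_f$. By the valuative criterion of properness it extends uniquely to $\spec \oring_L\to\Gamma_f$, and composing with $p$ and $q$ gives the extensions of $x_L$ and $f(x_L)$. The closed point therefore maps to $(\spe x_L,\spe f(x_L))=(x,\spe f(x_L))$, which lies in $p^{-1}(x)$.

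\emph{Step 2: every point of $q(p^{-1}(x))$ arises this way.} Let $z=(x,y)\in\Gamma_f(\ol k)$. Apply \Cref{prop:valucriterion} to the projective variety $\Gamma_f$ (take the reduced structure and embed it in a projective variety; alternatively use Chow's lemma, or use that $X\times Y$ is projective when $Y$ is) with the dense open graph of $f|_U$. This gives $z_t=(x_t,f(x_t))$ with $x_t\in U(K)$ and $\spe z_t=z$, so $\spe x_t=x$ and $\spe f(x_t)=y$. Since \Cref{prop:valucriterion} as stated only yields points over $K$ itself, some care is needed: the curve-selection argument behind it produces a point of a smooth curve germ through $z$. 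Its completed local ring is $k[[s]]$, possibly after passing to the normalization of the curve, which may require a finite extension $L$ of $K$; this is why finite extensions appear in the statement.

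\emph{Step 3: the regular case.} If $f$ is regular at $x$, then on a neighborhood $W\ni x$ the closure of the graph is the graph of $f|_W$. Hence $p^{-1}(x)=\{(x,f(x))\}$, and equivalently $\spe f(x_L)=f(\spe x_L)$ by continuity of the morphism $f|_W$ on $\oring_L$-points.

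\emph{Step 4: the converse, which is the main obstacle.} Suppose $X$ is normal and $p^{-1}(x)$ is a single geometric point. The map $p$ is proper and birational, and its fiber over $x$ is finite, so $p$ is quasi-finite over a neighborhood of $x$ by upper semicontinuity of fiber dimension. A proper quasi-finite map is finite. By Zariski's Main Theorem, a finite birational morphism onto a normal variety is an isomorphism over that neighborhood. Hence $f=q\circ p^{-1}$ is regular near $x$.

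The final equivalence then follows from Steps 1 through 4: $x\in I(f)$ if and only if $p^{-1}(x)$ has at least two geometric points, if and only if two valued lifts of $x$ have distinct specializations. Irreducibility of $\Gamma_f$ and connectedness of the fibers, which also follow from Zariski's Main Theorem, are used implicitly only in that last step. I expect the points needing the most care to be the finite-extension bookkeeping in Step 2 and the verification in Step 4 that $\Gamma_f$ is a variety to which Zariski's Main Theorem applies; this holds because $\Gamma_f$ is irreducible as the closure of an irreducible set.
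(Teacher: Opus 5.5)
Your proposal is correct and follows essentially the same route as the paper: properness of $\Gamma_f$ specializes graph points for one inclusion; a curve through $z\in p^{-1}(x)$ with generic point over $U$, normalized and completed, gives the other; and for the converse, a singleton fiber makes the proper birational $p$ quasi-finite, hence finite, near $x$, hence an isomorphism by normality. Two points are cosmetic: since $\Gamma_f$ need not be projective when $Y$ is merely proper, the direct curve selection (or Chow's lemma) that you fall back on, rather than \Cref{prop:valucriterion} itself, is the right tool in Step 2; and connectedness of fibers is never actually used.
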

\begin{proof}
Properness of $Y$ gives one inclusion by specializing the graph point $(x_L,f(x_L))$. Conversely, choose an integral curve through $z\in p^{-1}(x)$ with generic point in the graph over $U$. Normalizing and completing this curve at a point over $z$ gives, after a finite extension of $K$, a valued point $(x_L,f(x_L))$ specializing to $z$.

If $f$ is regular near $x$, then $p^{-1}(x)=\{(x,f(x))\}$. Conversely, a singleton geometric fiber makes $p$ quasi-finite near $x$. Since $p$ is proper and birational, it is finite birational there, hence an isomorphism when $X$ is normal; then $f=q\circ p^{-1}$ is regular near $x$.
\end{proof}
\begin{remark} The normality assumption in the converse cannot in general be omitted: a proper finite birational morphism with singleton geometric fibers need not be an isomorphism over a nonnormal variety.
\end{remark}

\subsection{The Berkovich Projective Line}
Let $\ol K$ be an algebraic closure of the discretely valued field $K$, equipped with the extended valuation. For analytic constructions we use its completion $\widehat{\ol K}$. Throughout, $\ponean_K$ denotes the Berkovich projective line over $\widehat{\ol K}$; its residue field is still $k$. Its points are classified into four types (\cite{BakerRumely2010PotTheoryDynOnBerkoP1}). Apart from the point at infinity, points of types I, II, and III are identified with closed discs in $\widehat{\ol K}$ of radius $0$, positive radius in $\exp(\fct{Im}(v_{\widehat{\ol K}})\cap\real)$, or any other positive radius, respectively. Type~IV points, which we do not use, are represented by infinite decreasing sequences of closed discs in $\widehat{\ol K}$.

The Berkovich upper half plane $\mathcal{H}_K$ is the subspace of $\ponean_K$ of the points of type II or III. The space $\mathcal{H}_K$ has a natural $\real$-tree structure which can be described by the gluing of isometries 
\[ \alpha_x : \real \to \ponean_K: r\mapsto D(x,\exp (r) ) \text{ for each }x \in \widehat{\ol K}. \]

A nonconstant morphism $\varphi : \pone_K \to \pone_K$ induces the morphism $\varphi^{an}: \ponean_K \to \ponean_K$ and types of points are preserved under the induced morphism.

\section{Chow Quotient} \label{sec:ChowQuot}
\subsection{Chow scheme and Definition of Chow Quotient}
We recall some fundamental facts about Chow schemes (\cite{gkz1994disc-res-mult}, \cite{Kollar1996RatCurvesOnAlgVar}) and Chow quotients (\cite{kapranov1993ChowQuotGrassI}).

Let $X$ be a projective variety.
\emph{An algebraic cycle} $Z$ of dimension $m$ on $X$ is a formal finite sum $Z = \sum_i n_i Z_i$ of $m$-dimensional irreducible closed subvarieties $Z_i$ of $X$ with integral coefficients. \emph{The support} of an algebraic cycle $Z$ is a closed subset of $X$ defined as $\fct{supp} (Z) := \bigcup_i \fct{supp}(Z_i)$. We write $Z_m(X)$ for the abelian group of the algebraic cycle of dimension $m$ on $X$.

The Chow scheme $\fct{Chow}(\proj^n,m,d)$ parametrizes effective cycles of dimension $m$ and degree $d$ in $\proj^n=\proj(V)$, with projective embedding
\eqns{
\fct{Chow}(\proj^n ,m,d) & \hookrightarrow \proj ( H^0( Gr(n-m, V) , d \cdot H ) ) \\
Z & \mapsto R_Z : V_+(R_Z) = \{ [W] \in Gr(n-m,V) \mid W \cap Z_i \neq \emptyset \} \\ 
 & \ \ \ \ ( \text{if }Z : \fct{irreducible}), \\
\sum n_i Z_i & \mapsto \prod R_{Z_i}^{n_i} \ (\text{otherwise}), 
}
where $H$ is the hyperplane class, the pullback of $\oring (1)$ under the Pl\"{u}cker embedding $Gr(n-m,V) \to \proj (\bigwedge^{n-m} V)$. Thus the set of the effective algebraic cycles on $\proj^n$ has a scheme structure 
\[ \chow (\proj^n ) = \bigsqcup_{m,d} \chow (\proj^n , m,d ). \]

Let $X$ be a projective variety. For any closed embedding $X \hookrightarrow \proj^n$, the Chow scheme of $X$ is regarded as a closed subscheme of $\chow (\proj^n )$
\[ \fct{Chow}(X) := \{ Z \in \fct{Chow}(\proj^n ) \mid \supp Z \subset X \}, \]
and the induced scheme structure is independent of the embedding.

Following the dimension convention for Chow groups, put
\[ A_m(X):=Z_m(X)/\mathord{\sim}_{\mathrm{rat}}. \]
Write $[Z]\in A_m(X)$ for the rational-equivalence class of $Z$ and, when $X$ is pure of dimension $N$, put $A^c(X):=A_{N-c}(X)$. For the general Chow-quotient construction we instead use algebraic equivalence, writing
\[ A_m^{\mathrm{alg}}(X):=Z_m(X)/\mathord{\sim}_{\mathrm{alg}},\qquad [Z]_{\mathrm{alg}}\in A_m^{\mathrm{alg}}(X). \]
For $\delta\in A_m^{\mathrm{alg}}(X)$, write
\[ \chow(X,\delta):=\{Z\in\chow(X)\mid Z\text{ is an effective }m\text{-cycle},\ [Z]_{\mathrm{alg}}=\delta\}. \]
For projective spaces and projective bundles over products of projective lines, the projective bundle formula gives a perfect integral intersection pairing on the Chow ring. Since algebraically trivial cycles are numerically trivial, rational and algebraic equivalence coincide on these varieties. We identify the two groups in our computations below, retaining $A^*(X)$ for the rational-equivalence Chow ring.

For a morphism $f:X\to Y$ between projective schemes and an effective cycle $Z=\sum_i n_i[Z_i]$, proper pushforward is given by
\eqnl{ f_*Z=\sum_{\dim f(Z_i)=\dim Z_i}n_i\deg(f|_{Z_i})[f(Z_i)]. \label{eq:chowpo} }
On a component of the Chow scheme on which the pushforward class is constant, this cycle-theoretic operation induces the corresponding morphism of Chow schemes.

In this section, let $G$ be a connected and reduced algebraic group acting on $X$.
We assume that the action of $G$ is generically free, that is, the stabilizer group is trivial for a generic point $x$ of $X$. We write $\ol{G\cdot x}_{\mathrm{red}}$ for the reduced closed subvariety underlying the orbit closure. If $x$ has finite stabilizer, we define the \emph{orbit cycle} by
\eqnl{ \ol{G\cdot x}:=(\#\stab_G(x))\ol{G\cdot x}_{\mathrm{red}}\in Z_{\dim G}(X). \label{eq:orbitcycle} }
Thus the notation without the subscript $\mathrm{red}$ always includes the stabilizer multiplicity. We denote by $\delta(G,x):=[\ol{G\cdot x}]_{\mathrm{alg}}\in A_{\dim G}^{\mathrm{alg}}(X)$ the algebraic-equivalence class of the orbit cycle. For an irreducible subvariety $X'\subset X$ whose generic point has finite stabilizer, this class is constant on a dense open subset; we denote it by $\delta(G,X')$. Choose a $G$-invariant dense open subset $U=U(G,X)\subset X$ on which the action is free and the orbit closures form an algebraic family. This gives the orbit-cycle morphism
\[ \pi_U : U \to \chow (X, \delta (G,X)) : x \mapsto \ol{G \cdot x}. \]
\begin{definition}[\cite{kapranov1993VeroneseAndM0n}] 
\emph{The Chow quotient} of $X$ by $G$ is defined by 
\[ X \sslash_{Ch} G := \ol{\pi_U (U)} = \ol{ \{ \ol{G\cdot x}^{X} \mid x\in U \} }^{\fct{Chow}(X)}. \]
\end{definition}
The Chow quotient has the universal cycle $Z \to X$, the $X \sslash_{Ch} G$-family of cycles of class $\delta (G,X)$ of $X$.

\subsection{Chow Families: Lifting and Local Constructions}

Let $X$ be a projective variety and $\delta$ a cycle class of $X$. The Chow scheme $\chow (X,\delta )$ has the universal fiber, which is called the \emph{Chow fiber}
\[ \cat{Z} = \cat{Z}_{X,\delta} := \{ (Z , x) \mid x \in \supp (Z) \} \subset \chow (X,\delta ) \times X. \]
We remark that $\cat{Z}$ is just a Zariski closed subset of $\chow (X,\delta ) \times X$.

The following lifting lemma does not prescribe the specialization in the fiber.
\begin{lemma} \label{lem:curvefib}
    Let $R$ be a discrete valuation ring and $f : X \to Y$ a surjective morphism between projective varieties. For any morphism $y_R : \spec R \to Y$, there exists a finite extension $S$ of $R$ and $x_S : \spec S \to X$ such that $f(x_S) = y_R$.
\end{lemma}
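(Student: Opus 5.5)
The plan is to lift the generic point of the $R$-point to $X$ and then take a closure. Let $\eta$ and $s$ be the generic and closed points of $\spec R$, with $L:=\fct{Frac}(R)$. First I handle the image of the generic point. Since $f$ is surjective, the scheme-theoretic fiber $X_{y_R(\eta)}:=X\times_Y \spec L$ is a nonempty projective $L$-scheme. A nonempty scheme of finite type over a field has a closed point, and its residue field is a finite extension $L'$ of $L$. This gives an $L'$-point $x_{L'}:\spec L'\to X$ with $f\circ x_{L'}=y_R|_{\spec L'}$.

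Next I would choose the extension ring. Let $S'$ be the integral closure of $R$ in $L'$ and localize at a maximal ideal. Then $S'$ is a discrete valuation ring dominating $R$. When $R$ is complete, as for $K=k((t))$ or its finite extensions in our setting, $S'$ is itself a complete DVR, finite over $R$. In general, $R$ is a DVR in characteristic $0$, so it is excellent (Japanese), and the integral closure is again finite over $R$; this is the point to check if one wants the statement for arbitrary $R$. Set $S:=S'$.

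Then I extend over the closed point using properness. The valuative criterion for the projective, hence proper, variety $X$ extends $x_{L'}:\spec L'\to X$ uniquely to $x_S:\spec S\to X$. To check $f\circ x_S=y_R\circ(\spec S\to\spec R)$, note that both are morphisms $\spec S\to Y$ agreeing on the generic point. Since $Y$ is separated and $\spec S$ is reduced with dense generic point, they agree everywhere. Equivalently, this follows from the uniqueness part of the valuative criterion for $Y$.

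I expect no serious obstacle. The only delicate points are ensuring that the fiber over the generic point is nonempty, which needs surjectivity on scheme points rather than only on $k$-points (projective morphisms are closed, so this holds), and that the extension $S/R$ is finite. Both are handled above. As the statement says, nothing is claimed about which point of the special fiber $x_S(s)$ hits.
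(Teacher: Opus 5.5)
Your proposal is correct and is essentially the paper's argument: choose a point of the generic fiber of $X\times_Y\spec R$ over a finite extension of the fraction field, then extend it over the valuation ring of an extension of the valuation by properness of $X$, with separatedness of $Y$ giving $f\circ x_S=y_R$. One minor slip: you say $S$ is module-finite over $R$ in general, but only the integral closure is. Its localization at one of several maximal ideals (when the valuation splits) need not be module-finite. This is harmless, since the paper's ``finite extension'' means a dominating DVR with finite fraction-field extension, and for complete $R$ the integral closure is already local.
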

\begin{proof}
After a finite extension of the fraction field of $R$, choose a point of the generic fiber of $X\times_Y\spec R\to\spec R$. The valuative criterion for properness extends this point to the valuation ring $S$ of an extension of the valuation, giving the required lift.
\end{proof}
For Chow families, we can also prescribe a dense open subset of a special-fiber component.
\begin{proposition} \label{prop:PTS}
    Let $R$ be a discrete valuation ring with closed point $s$, and let $\varphi:\spec R\to\fct{Chow}(\proj^n,m,d)$ be a morphism. Fix an irreducible component $C\subset\supp\varphi(s)$ and a dense open subset $U\subset C$. After a finite extension $S/R$, there is a morphism
    \[ \widetilde{\varphi}:\spec S\longrightarrow\proj^n \]
    lying in the support of the corresponding cycle on both fibers, with specialization in $U$.
\end{proposition}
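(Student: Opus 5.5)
The plan is to build the lift by pulling the family back to the Chow fiber, i.e. the incidence correspondence $\cat{Z}\subset\chow(\proj^n,m,d)\times\proj^n$.

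Pull back along $\varphi$ to get $\cat{Z}_R:=\cat{Z}\times_{\chow}\spec R\subset\spec R\times\proj^n$. This is a closed subscheme, projective over $\spec R$. Its generic fiber is the support of the cycle $\varphi(\eta)$, and its special fiber, set-theoretically, is $\supp\varphi(s)$.

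The key input is flatness of the family of supports over the generic point. I will use the standard fact from Chow-scheme theory (\cite{Kollar1996RatCurvesOnAlgVar}, or the Chow-form description in \cite{gkz1994disc-res-mult}) that a morphism from a normal one-dimensional base to the Chow variety corresponds to a well-defined family of cycles. For such a family, the special cycle is the specialization of the generic cycle. Concretely, let $\mathcal{W}$ be the closure in $\spec S'\times\proj^n$ of the generic-fiber support $\supp\varphi(\eta)$, after a finite extension making the components geometrically irreducible. Then the fundamental cycle of the special fiber of $\mathcal{W}$, with multiplicities, equals $\varphi(s)$. In particular every component $C$ of $\supp\varphi(s)$ is a component of the special fiber $\mathcal{W}_s$.

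I expect this identification to be the main obstacle. The special fiber of $\cat{Z}_R$ might a priori contain extra components that do not come from flat limits of the generic support. To rule this out, I would compare Chow forms: the Chow form $R_{\varphi(s)}$ is the reduction of $R_{\varphi(\eta)}$, and the Chow form of the flat limit $\mathcal{W}_s$ is also that reduction. Since a cycle is determined by its Chow form, the limit cycle is $\varphi(s)$, and so $C$ lies in $\mathcal{W}_s$.

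Now choose an irreducible component $\mathcal{W}'$ of $\mathcal{W}$ whose special fiber contains $C$ as a component. $\mathcal{W}'$ is flat over $\spec S'$ and dominates it. Pick a closed point $c\in U$ that does not lie in the other components of $\mathcal{W}_s$. Take an integral curve $D\subset\mathcal{W}'$ through $c$ that is not contained in the special fiber; one exists by cutting $\mathcal{W}'$ with general hyperplanes through $c$, since $\dim\mathcal{W}'=m+1$. Normalize $D$ and complete it at a point over $c$. After a further finite extension $S/S'$, this gives a morphism $\widetilde{\varphi}:\spec S\to\mathcal{W}'\subset\proj^n_S$, following the curve argument in the proof of \Cref{prop:indetlocus}. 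Its generic point lies in $\supp\varphi(\eta)$ and its closed point is $c\in U\subset\supp\varphi(s)$, which is what the statement asks for.

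Alternatively, one can replace the curve construction with \Cref{lem:curvefib}. Apply it to the map $\mathcal{W}'\to\spec S'$ composed with a local parameter chosen so that the special point is forced to $c$. Blowing up $c$ makes the required point a dominating divisor, which gives the prescribed specialization.
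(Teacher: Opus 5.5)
Your argument is correct and is essentially the paper's proof: you take the closures of the components of $\varphi(\eta)$ in $\proj^n_S$, use compatibility of Chow forms with specialization to see that $C$ is a component of the special fiber of one of these flat closures, then cut by general hyperplanes through a general point of $U$, normalize the resulting horizontal curve, and localize at a point over it. Two small remarks. First, if the generic multiplicities satisfy $n_i>1$, the special cycle is $\sum_i n_i[(\mathcal W_i)_s]_m$ rather than the fundamental cycle of the reduced closure $\mathcal W_s$; this is harmless, since only supports matter. Second, drop your closing alternative via \Cref{lem:curvefib}: that lemma does not prescribe the special point, so as written it does not force the specialization into $U$.
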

\begin{proof}
Let $\eta$ be the generic point of $\spec R$, write $\varphi(\eta)=\sum_i n_i[V_i]$, and let $\mathcal V_i\subset\proj^n_R$ be the scheme-theoretic closure of $V_i$. Each $\mathcal V_i$ is integral and flat over $R$. Compatibility of Chow forms with specialization gives
\[ \varphi(s)=\sum_i n_i[(\mathcal V_i)_s]_m, \]
where $[\,\cdot\,]_m$ denotes the dimension-$m$ fundamental cycle. Thus $C$ is a component of $(\mathcal V_i)_s$ for some $i$. Cutting $\mathcal V_i$ by general hyperplanes through a general closed point of $U$ gives a horizontal integral curve through that point. Normalizing this curve and taking the valuation ring at a point above it gives the required finite extension $S/R$ and lift.
\end{proof}

We record a local criterion for recovering the summands of a Chow cycle.
\begin{lemma}[Recovery of cycle summands] \label{lem:ChowFactorization}
Let $X$ be a projective variety and $\mathcal C_1,\ldots,\mathcal C_r\subset\fct{Chow}(X)$ projective subvarieties parametrizing effective $m$-cycles of fixed positive degrees. Let $H$ be a permutation group acting only among equal factors. Consider the addition map
\[ a:(\mathcal C_1\times\cdots\times\mathcal C_r)/H\longrightarrow\fct{Chow}(X),
\qquad [(Z_1,\ldots,Z_r)]\longmapsto\sum_i Z_i. \]
Suppose a geometric cycle $Z=\sum_i Z_i$ has a unique such decomposition up to $H$, and no two summands share an irreducible component. Then $a^{-1}(U)\to U$ is a closed immersion for some open neighborhood $U$ of $Z$ in $\fct{Chow}(X)$.
\end{lemma}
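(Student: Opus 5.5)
The plan is to show that the addition map $a$ is injective on geometric points near $Z$, that it is unramified at the point $[(Z_1,\ldots,Z_r)]$, and that it is proper. A proper, injective, unramified morphism is a closed immersion. Shrinking $U$ will be used to pass from statements at the single point to statements over a neighbourhood.

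\emph{Properness.} Each $\mathcal C_i$ is projective, so the finite quotient $(\mathcal C_1\times\cdots\times\mathcal C_r)/H$ is projective. Hence $a$ is proper with closed image. By upper semicontinuity of fibre dimension and the uniqueness hypothesis, the fibre $a^{-1}(Z)$ is a single point. So after shrinking $U$, $a^{-1}(U)\to U$ is finite.

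\emph{Injectivity near $Z$.} I would argue by contradiction. Suppose that in every neighbourhood of $Z$ there are cycles $Z'$ with two $H$-inequivalent decompositions. Using the valuative criterion and \Cref{prop:valucriterion}, take a DVR-point $\spec R\to\fct{Chow}(X)$ with generic point such a $Z'$ and special point $Z$. By \Cref{lem:curvefib}, after a finite extension both decompositions lift to $R$-points of the source. Since $a^{-1}(Z)$ is a single point, both lifts specialize to $(Z_1,\ldots,Z_r)$ up to $H$. Specialization of cycles is additive on components, as in the proof of \Cref{prop:PTS}, and no two $Z_i$ share a component. Therefore each irreducible component $W$ of the generic cycle specializes into exactly one $Z_i$. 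This rigidly determines the assignment of $W$ to a factor in both decompositions. The two decompositions then have the same summands, which contradicts $H$-inequivalence. So after shrinking $U$, the map $a^{-1}(U)\to U$ is a finite bijection on geometric points.

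\emph{Unramifiedness.} This is the step I expect to be the main obstacle, since Chow schemes carry delicate scheme structures. I would use the Chow-form description: $R_{\sum Z_i}=\prod_i R_{Z_i}$ in $\proj(H^0(Gr,\cdot))$. The factors $R_{Z_i}$ are pairwise coprime, because the $Z_i$ share no components and so their incidence divisors share no components. Multiplication of coprime forms, $\prod_i\proj(H^0(d_iH))\to\proj(H^0((\sum d_i)H))$, is then injective on tangent spaces at that point, up to the $H$-symmetry. This is the classical fact that polynomial factorization into coprime factors is étale, which can be checked by the Bezout/Koszul argument showing that $\sum_i \epsilon_i\prod_{j\ne i}R_{Z_j}=0$ forces $R_{Z_i}\mid\epsilon_i$. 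Restricting to the closed subschemes $\mathcal C_i$ preserves this injectivity. Once $a$ is finite, bijective onto its image near $Z$, and unramified, it is a closed immersion after a final shrinking of $U$.
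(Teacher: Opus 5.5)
Your proposal is correct and follows the paper's proof: $a$ is finite near $Z$, it is unramified at $[(Z_1,\ldots,Z_r)]$ by the tangent computation for a product of pairwise coprime Chow forms, $H$ acts freely at that point, and $a$ becomes a closed immersion after shrinking $U$. The paper phrases the tangent computation as $\dot F/F=-\dot G/G$ having no poles on the normal projective Grassmannian, hence being constant.

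The one difference is your valuative injectivity step, which is harmless but unnecessary. A finite morphism that is unramified over a neighborhood of $Z$, with a single geometric point over $Z$ and trivial residue field extension, is already a closed immersion over a neighborhood (Stacks Project, Tag 04DG), so injectivity near $Z$ comes for free.
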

\begin{proof}
The map $a$ is finite: it is proper, and a fixed effective cycle has only finitely many decompositions into summands of the prescribed degrees.

In Chow coordinates, addition is multiplication of forms on a Grassmannian. For two forms $F,G$ with no common divisor component, a tangent vector in the kernel satisfies, after choosing scalar representatives,
\[ \dot F G+F\dot G=0,\qquad \frac{\dot F}{F}=-\frac{\dot G}{G}. \]
The two fractions can have poles only on disjoint sets of divisor components, so their common value has no divisorial poles. Normality and projectivity of the Grassmannian make it constant, hence the tangent vector is zero in projective coordinates. Induction gives the same conclusion for all factors. Since $H$ acts freely here, $a$ is unramified at the unique point over $Z$.

As $a$ is finite, we may shrink around $Z$ so that $a$ is unramified throughout. The singleton geometric fiber then gives the closed immersion by \cite[\href{https://stacks.math.columbia.edu/tag/04DG}{Tag 04DG}]{stacks-project}.
\end{proof}

Degree-one intersections give another local construction.
\begin{lemma}[Degree-one Chow slice] \label{lem:degreeoneChowSlice}
Let $X$ be a projective variety, let $\mathcal C$ be a projective subvariety of a component of $\fct{Chow}(X)$ parametrizing $m$-dimensional cycles, and let $D_1,\ldots,D_m$ be effective Cartier divisors on $X$. On the open subset $\mathcal C^{\circ}\subset\mathcal C$ where the successive intersections with the $D_r$ are proper, the assignment
\[ Z\longmapsto Z\cdot D_1\cdots D_m \]
defines a morphism from $\mathcal C^{\circ}$ to the Chow scheme of zero-cycles on $X$. If these zero-cycles have degree one, this is a morphism
\[ \tau_{D_1,\ldots,D_m}:\mathcal C^{\circ}\longrightarrow \fct{Chow}(X,0,1)\simeq X. \]
\end{lemma}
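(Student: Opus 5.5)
The plan is to reduce to the universal situation on the Chow scheme and use the standard fact that intersection with a Cartier divisor is compatible with flat families of cycles. It is enough to treat one divisor at a time and then compose. Consider the universal Chow fiber $\cat{Z}\subset \chow(X,\delta)\times X$ restricted to $\mathcal C$. After passing to the seminormalization, or working with the well-defined Chow family (Kollár's notion of a well-defined family of algebraic cycles over a normal or seminormal base), $\mathcal C$ carries a family of $m$-cycles whose Chow forms vary algebraically. The intersection product $Z\cdot D_1$ is defined cycle-theoretically whenever $D_1$ contains no component of $\supp Z$. This condition is open on $\mathcal C$, because the locus where some component of $\supp Z$ lies in $\supp D_1$ is the image of a closed subset of $\cat{Z}$ under a proper map, and the dimension of fibers is upper semicontinuous.

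On this open set I would show that $Z\mapsto Z\cdot D_1$ is a morphism to the Chow scheme of $(m-1)$-cycles, working in Chow coordinates. Embed $X\subset\proj^n$ and assume first that $D_1$ is a hyperplane section $\{\ell=0\}$. The Chow form of $Z\cdot H$ is then obtained from the Chow form $R_Z$ by a universal algebraic operation: restricting $R_Z$ to Grassmannian points $W$ that contain $\ker\ell$-related subspaces, which is the classical formula expressing the Chow form of a hyperplane section. This operation is polynomial in the coefficients of $R_Z$ and does not vanish identically precisely when the intersection is proper (\cite{gkz1994disc-res-mult}, \cite{Kollar1996RatCurvesOnAlgVar}). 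It therefore defines a morphism of the open locus. For a general effective Cartier divisor $D_1$, I would replace it by $D_1 = D_1'-D_1''$ with $D_1',D_1''$ very ample after twisting. Alternatively, and more cleanly, I would re-embed $X$ so that $D_1$ becomes a hyperplane section: choose $\oshf(D_1)\otimes L$ very ample with $L$ very ample and a section defining $D_1+H_L$ for a general $H_L$. Since $\chow(X)$ is independent of the embedding, this is legitimate. The extra term $Z\cdot H_L$ would be removed using the subtraction/recovery argument of \Cref{lem:ChowFactorization}.

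Iterating $m$ times lands in zero-cycles, and the open condition is exactly successive properness. If the degree is one, $\fct{Chow}(X,0,1)\simeq X$ identifies the target with $X$, which gives $\tau$.

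The main obstacle is the passage from a hyperplane to an arbitrary effective Cartier divisor. I expect this to be handled most simply by working locally on $\mathcal C^{\circ}$: near a given cycle, choose an embedding adapted to $D_r$, or alternatively write the intersection as the pushforward of the family $\cat{Z}$ intersected with the pullback of $D_r$. That family is a relative Cartier divisor on the flat part, since properness guarantees $D_r$ contains no component of any fiber. Representability of proper flat families of cycles by Chow morphisms, which holds on normal bases (to which the statement's ``morphism'' should be read as referring, or after seminormalization), then gives the conclusion.
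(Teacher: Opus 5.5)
Your overall architecture matches the paper's. You treat a hyperplane section by the classical substitution formula for Chow forms, which is linear in the coefficients of $R_Z$ and nonzero exactly on the proper-intersection locus. You realize a general effective Cartier divisor $D$ through $D+E$ and $E$, both very ample, with $E$ general so that it meets $Z_0$ properly. You then remove the auxiliary term and iterate.

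The gap is in the removal step. \Cref{lem:ChowFactorization} is not the right tool, for two reasons:
\begin{itemize}
\item It needs $Z_0\cdot(D+E)$ to have a \emph{unique} decomposition into summands of the prescribed degrees. In the paper's applications $Z_0$ is a sum of several orbit closures, so $Z_0\cdot D$ and $Z_0\cdot E$ each have several components. Nothing prevents exchanging subcycles of equal degree between them, which would give a second decomposition.
\item It works modulo permutations of equal-degree factors. When $\deg(Z\cdot D)=\deg(Z\cdot E)$, you cannot even project to the $Z\cdot D$ factor.
\end{itemize}

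The missing observation is that you never need to recover a decomposition from the sum alone, because $Z\cdot E$ is already a known regular function of $Z$. Chow forms are multiplicative,
\[ R_{Z\cdot(D+E)}=R_{Z\cdot D}\,R_{Z\cdot E}, \]
and multiplication by the nonzero form $R_{Z\cdot E}$ is an injective linear map on forms of the relevant degree. Near $Z_0$ some fixed maximal minor of this map is invertible. Cramer's rule then expresses the coefficients of $R_{Z\cdot D}$ as regular functions on a neighborhood of $Z_0$ in $\mathcal C^{\circ}$. This division is exactly how the paper concludes.

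Your fallback, via flat families and representability ``on normal bases, or after seminormalization,'' changes the statement rather than proving it. First, the universal cycle over $\mathcal C$ need not be flat. More importantly, the lemma is applied in \Cref{cor:markChowQuotMultiplier} on open subsets of $X_d$, which is not known to be normal. A morphism defined only on the normalization or seminormalization of $\mathcal C^{\circ}$ would not give regularity of $\ol{\lambda}_i$ on $X_d$ itself. The Chow-form argument, completed by the division step above, works directly on $\mathcal C^{\circ}$ and avoids this weakening.
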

\begin{proof}
The hyperplane substitution and hypersurface formulas of \cite[\S2.1, Propositions 2.1--2.2]{DalbecSturmfels1995ChowForms}, expressed in symmetric coordinates and normalized by a nonzero coefficient, give regular Chow coordinates on the proper-intersection locus.

For an effective Cartier divisor $D$ and a cycle $Z_0$ meeting it properly, choose an effective $E$ meeting $Z_0$ properly with $E$ and $D+E$ very ample. The corresponding embeddings give $Z\cdot E$ and $Z\cdot(D+E)$ regularly near $Z_0$. Fix any Chow embedding. Then
\[ R_{Z\cdot(D+E)}=R_{Z\cdot D}\,R_{Z\cdot E}. \]
Multiplication by the known nonzero factor $R_{Z\cdot E}$ is injective on forms of the required degree; an invertible maximal minor therefore recovers the quotient coefficients regularly on this divisibility locus. Iterating proves the assertion, and $\fct{Chow}(X,0,1)\simeq X$ gives the degree-one case.
\end{proof}

\subsection{PTS principle, for Components of Chow Quotients}

For algebraic cycles $Z,Z'$, write $Z'\leq Z$ when $Z-Z'$ is effective.

\begin{proposition} \label{prop:stabilizermultiplicity}
    Let $X$ be a projective variety and $G$ a connected reductive group with generically free action on $X$. For any $Z \in X \chquo G$ and $x \in \supp Z$, if $x$ has a finite stabilizer group $H$, then we have 
    \[ \ol{G \cdot x} \leq Z . \]
\end{proposition}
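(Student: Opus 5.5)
We reduce the statement to a one-parameter degeneration of orbit cycles. By \Cref{prop:valucriterion}, applied to the projective variety $X\chquo G$ and its dense open subset $\pi_U(U)$, there is a discretely valued field $K$ and a point $y_t\in U(K)$ such that $\spe \pi_U(y_t)=Z$. This gives a morphism $\varphi:\spec A\to\fct{Chow}(X)$ whose generic fiber is the orbit cycle $\ol{G\cdot y_t}$ over $K$ and whose special fiber is $Z$. The point $y_t$ has trivial stabilizer, so this orbit cycle is reduced and has multiplicity one.

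Next we show that $x$ is a limit of a conjugated family. The point $x$ lies on some component $C$ of $\supp Z$. Because $\stab_G(x)=H$ is finite, the orbit $G\cdot x$ has dimension $\dim G=m$. It is therefore open and dense in a component of $\supp Z$, namely $C=\ol{G\cdot x}_{\mathrm{red}}$. Apply \Cref{prop:PTS} with $U=G\cdot x$. After a finite extension $S/A$, this gives a valued point of $\ol{G\cdot y_t}$ specializing into $G\cdot x$. Such a point can be written, after a further finite extension, as $g_t\cdot y_t$ with $g_t\in G(L)$. Translating by an element of $G(k)$, we may assume the specialization is exactly $x$. Set $x_t:=g_t\cdot y_t$. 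Then $x_t$ lies in the same orbit as $y_t$, its orbit cycle is the same $K$-point of $\fct{Chow}(X)$, and $\spe x_t=x$.

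It remains to compare multiplicities. Consider the action map $a_A:G_A\to X_A$ given by $g\mapsto g\cdot \tilde x_t$, where $\tilde x_t$ is the $A$-extension of $x_t$. Its generic fiber is an isomorphism onto the open orbit $G\cdot x_t$ in $\ol{G\cdot x_t}$. Its special fiber is the orbit map of $x$, which is finite étale of degree $\#H$ onto $G\cdot x$. We then take the closure $\mathcal V$ of $G\cdot x_t$ in $X_A$, which is flat over $A$. Its special fiber $\mathcal V_s$ has fundamental cycle $Z$, by the compatibility used in the proof of \Cref{prop:PTS}. We claim that $\mathcal V_s$ has multiplicity at least $\#H$ along $C$. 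To see this, base change along the étale orbit map. Locally around $x$, the morphism $G_A\to\mathcal V$ is quasi-finite and birational over the generic fiber. On the special fiber it covers $G\cdot x$ with degree $\#H$.

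The main obstacle is converting this covering degree into the length of $\mathcal V_s$ at the generic point of $C$. The approach is to use the normalization $\mathcal V^\nu$. The map $G_A\to\mathcal V$ factors through $\mathcal V^\nu$ as an open immersion near $G_s$, because $G_A$ is normal and the map is birational. Hence $\mathcal V^\nu_s$ contains $G_s$ as an open set. The finite map $\mathcal V^\nu\to\mathcal V$ sends $G_s$ onto $C$ with degree $\#H$. Specialization commutes with finite pushforward of flat families of cycles, so $\nu_*[\mathcal V^\nu_s]=[\mathcal V_s]$. The component $G_s$ of $\mathcal V^\nu_s$ appears with multiplicity at least $1$. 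Its pushforward therefore contributes at least $\#H\cdot[C]$. All other contributions are effective, so
\[ Z=[\mathcal V_s]_m\ \geq\ \#H\,\ol{G\cdot x}_{\mathrm{red}}=\ol{G\cdot x}. \]
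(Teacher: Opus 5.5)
Your argument is correct, but it gets the multiplicity by a different route from the paper.

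\textbf{How the two proofs differ.} The paper removes the stabilizer by passing to $\ol G\times X$, where $\ol G$ is a projective equivariant compactification of $G$. There the orbit of $(e,x)$ is free, so its closure $C$ is automatically a component of the specialization $\widetilde Z$ of $\ol{G\cdot(e,x_K)}$, with coefficient at least one. The bound then follows from $\fct{pr}_{2,*}\widetilde Z=Z$ and $\deg(\fct{pr}_2|_C)=\#\stab_G(x)$. You instead stay inside $X_A$ and take the flat closure $\mathcal V$ of the generic orbit. Normalization plus Zariski's Main Theorem (quasi-finite, separated, birational, normal target) realizes $G_A$ as an open subscheme of $\mathcal V^\nu$. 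Hence $G_s$ is a reduced open piece of $\mathcal V^\nu_s$, and pushing forward along the degree-one map $\nu$ finishes.

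\textbf{What the two routes share and what each costs.} Both proofs rest on the same two facts:
\begin{enumerate}
\item specialization of cycles commutes with proper pushforward at the cycle level, via the norm formula for $f_*[\fct{div}(t)]$;
\item the orbit map $G\to G\cdot x$ has degree $\#H$ in characteristic zero.
\end{enumerate}
In fact the closure of the graph of $g\mapsto g\cdot x_K$ in $\ol G_A\times X_A$ contains the graph of $a_A$, a copy of $G_A$, as an open subscheme. So the paper's graph trick is an explicit proper model in which the open embedding you extract from ZMT is built in. The paper pays with an auxiliary compactification of $G$. You pay with finiteness of normalization (fine, since $A$ is a complete DVR, hence excellent) and ZMT.

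\textbf{A small repair.} Applying \Cref{prop:PTS} with $U=G\cdot x$ only yields a valued point of the orbit closure $\ol{G\cdot y_t}$. That point could lie in the boundary $\ol{G\cdot y_t}\setminus G\cdot y_t$, and then it is not of the form $g_t\cdot y_t$. To fix this, shrink $U$ to the complement in $G\cdot x$ of the special fiber of the closure of that boundary. This special fiber has dimension $<\dim G$, so the complement is still dense in $C$. Then any horizontal curve specializing into $U$ has its generic point in the open orbit. The paper's proof uses this same step implicitly.
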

\begin{proof}
Let $\ol G$ be a projective equivariant compactification of $G$ for the left-multiplication action, and let $G$ act diagonally on $\ol G\times X$. Choose a valuation curve of generic orbit cycles specializing to $Z$. Since $x$ has finite stabilizer, its orbit is dense in the $\dim G$-dimensional component $\ol{G\cdot x}_{\mathrm{red}}$ of $\supp Z$. By \Cref{prop:PTS}, after a finite extension and a constant translation we may represent the generic cycles by $x_K\in X(K)$ with $\spe x_K=x$.

Let $\widetilde Z$ be the specialization of the orbit closure of $(e,x_K)$ in $\ol G\times X$. The orbit of $(e,x)$ has trivial stabilizer, so
\[ C:=\ol{G\cdot(e,x)}_{\mathrm{red}} \]
occurs in $\widetilde Z$ with coefficient at least one. Moreover,
\[ \fct{pr}_{2,*}\widetilde Z=Z. \]
The restriction $\fct{pr}_2|_C:C\to\ol{G\cdot x}_{\mathrm{red}}$ is generically finite of degree $\#\stab_G(x)$. Applying the component-wise pushforward formula \cref{eq:chowpo} therefore shows that $Z$ contains
\[ (\#\stab_G(x))\ol{G\cdot x}_{\mathrm{red}}=\ol{G\cdot x}, \]
as asserted.
\end{proof}

Let $X_1$ and $X_2$ be projective varieties with $G$-actions and let $Y_i := X_i \sslash_{Ch}G$ ($i = 1,2$).
For a $G$-equivariant morphism $\Phi : X_1 \to X_2$, 
if the equality of algebraic-equivalence cycle classes
\eqnl{\delta (G , \fct{Im}(\Phi )) = \delta (G , X_2 ) \label{eq:dGim} } 
is satisfied, the pushforward of Chow cycles induces the morphism 
    \[ \Phi_* : Y_1 = X_1 \chquo  G  \to Y_2 = X_2 \chquo G. \]
This equality is automatically satisfied if $\Phi$ is surjective. Moreover, if $X_1 \to X_2$ is a birational morphism, the equality \cref{eq:dGim} is automatically satisfied and moreover $\Psi : Y_1 \to Y_2$ is also a birational morphism. We state this fact as a proposition for reference.
\begin{proposition} \label{prop:morphChquo}
 Let $\Phi : X_1 \ratmap X_2$ be a $G$-equivariant rational map between projective $G$-varieties. If $\delta (G, \pi_2( \Gamma_\Phi ) ) = \delta (G, X_2)$ for the graph variety $\Gamma_\Phi$ of $\Phi$, then the pushforward of cycles induces a rational map $\Phi_* : X_1\chquo G \ratmap X_2 \chquo G$.
\end{proposition}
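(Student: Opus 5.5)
We prove \Cref{prop:morphChquo} by reducing it to the morphism case already discussed before the statement. Let $\Gamma_\Phi\subset X_1\times X_2$ be the graph variety, with projections $\pi_1:\Gamma_\Phi\to X_1$ and $\pi_2:\Gamma_\Phi\to X_2$. Since $\Phi$ is $G$-equivariant, the graph of $\Phi$ over $U(\Phi)$ is $G$-stable for the diagonal action. Its closure $\Gamma_\Phi$ is therefore a projective $G$-variety, and $\pi_1,\pi_2$ are $G$-equivariant morphisms. The projection $\pi_1$ is birational, and it is an isomorphism over $U(\Phi)$. The action on $\Gamma_\Phi$ is generically free, because it agrees with the action on $X_1$ over a dense open set. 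Hence the Chow quotient $\Gamma_\Phi\chquo G$ is defined.

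The first step is to show that $(\pi_1)_*:\Gamma_\Phi\chquo G\to X_1\chquo G$ is a birational morphism, as remarked before the statement for birational equivariant morphisms. We argue concretely. Choose a $G$-invariant dense open $U\subset U(\Phi)$ on which the action on $X_1$ is free and the orbit closures form a family. For $x\in U$, the orbit closure of $(x,\Phi(x))$ in $\Gamma_\Phi$ maps birationally onto $\ol{G\cdot x}$. Thus \cref{eq:chowpo} gives $(\pi_1)_*\ol{G\cdot(x,\Phi(x))}=\ol{G\cdot x}$, and the classes match. Pushforward is a morphism on the relevant Chow components, so it gives a morphism of Chow quotients. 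This morphism is injective on the dense open image of $U$: the orbit $G\cdot x$ is dense in the pushed-forward cycle, so $x$ determines $(x,\Phi(x))$, and hence the orbit cycle upstairs. Since we are in characteristic $0$, an injective dominant morphism of varieties is birational.

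The second step uses the hypothesis $\delta(G,\pi_2(\Gamma_\Phi))=\delta(G,X_2)$. This is exactly \cref{eq:dGim} for the equivariant morphism $\pi_2$. So pushforward gives a morphism $(\pi_2)_*:\Gamma_\Phi\chquo G\to\chow(X_2,\delta(G,X_2))$. We must check that its image lies in $X_2\chquo G$. For generic $x$, the cycle $(\pi_2)_*\ol{G\cdot(x,\Phi(x))}$ is a positive multiple of $\ol{G\cdot\Phi(x)}$, and by the class equality the multiple is one. The hypothesis also means that a generic point of $\pi_2(\Gamma_\Phi)$ has a $3$-dimensional orbit whose cycle has class $\delta(G,X_2)$, so it is a limit of generic orbit cycles of $X_2$. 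Taking closures, the image lies in $X_2\chquo G$. Finally we set
\[ \Phi_*:=(\pi_2)_*\circ\bigl((\pi_1)_*\bigr)^{-1}, \]
which is a rational map $X_1\chquo G\ratmap X_2\chquo G$.

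The main obstacle is the containment of the image of $(\pi_2)_*$ in $X_2\chquo G$ when $\Phi$ is not dominant. In that case generic orbits of $\pi_2(\Gamma_\Phi)$ are not orbits through generic points of $X_2$. We would handle it by reading the hypothesis as saying that generic orbit cycles of $\pi_2(\Gamma_\Phi)$ lie in the closure of the family of generic orbit cycles of $X_2$, which is how $\delta(G,\cdot)$ is used in \cref{eq:dGim}. If a literal proof of this is required, we would argue through the connectedness of the Chow component of class $\delta(G,X_2)$ containing $X_2\chquo G$, taking valuative limits via \Cref{prop:valucriterion}.
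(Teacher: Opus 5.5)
Your route is the one the paper has in mind: pass to the equivariant correspondence $X_1\xleftarrow{\pi_1}\Gamma_\Phi\xrightarrow{\pi_2}X_2$, show $(\pi_1)_*$ is a birational morphism of Chow quotients, and set $\Phi_*=(\pi_2)_*\circ((\pi_1)_*)^{-1}$. The paper gives no separate proof; the paragraph before the statement only asserts that \cref{eq:dGim} lets pushforward land in the target Chow quotient.

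The genuine gap is the step you flag yourself: the inclusion $(\pi_2)_*(\Gamma_\Phi\chquo G)\subset X_2\chquo G$. Your sentence ``so it is a limit of generic orbit cycles of $X_2$'' is asserted, not proved, and neither remedy you offer works. Reading the hypothesis as already containing this inclusion changes the statement. Connectedness of the Chow component of class $\delta(G,X_2)$ does not help either. $X_2\chquo G$ is a closed subvariety of that component, in general a proper one. So having class $\delta(G,X_2)$, or lying in the same component, does not put a cycle in the closure of the generic orbit-cycle family. This is exactly the situation of the paper's application \Cref{cor:iterdeg}, where $\Phi_n$ is far from dominant.

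The missing idea is the specialization-plus-multiplicity argument from the proof of \Cref{cor:autfreeChow}.
\begin{itemize}
\item For generic $x$, the point $y=\Phi(x)$ has finite stabilizer and $[\ol{G\cdot y}]_{\mathrm{alg}}=\delta(G,X_2)$ by hypothesis.
\item By \Cref{prop:valucriterion}, choose $y_K\in U(G,X_2)(K)$ with $\spe y_K=y$. By properness the orbit cycles $\ol{G\cdot y_K}$ specialize to some $Z\in X_2\chquo G$.
\item Closedness of the universal incidence gives $y\in\supp Z$.
\item \Cref{prop:stabilizermultiplicity} then gives $\ol{G\cdot y}\leq Z$; it is stated for reductive $G$, which covers the paper's uses.
\item Both sides have class $\delta(G,X_2)$, so the effective difference is algebraically trivial. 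It therefore has degree zero against an ample class, and so vanishes.
\end{itemize}
Hence $(\pi_2)_*\ol{G\cdot(x,\Phi(x))}=\ol{G\cdot y}=Z\in X_2\chquo G$ for generic $x$. Since $(\pi_2)_*$ is a morphism and $X_2\chquo G$ is closed, the inclusion follows.

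A smaller point: your phrase ``by the class equality the multiple is one'' misplaces the role of the hypothesis. With the convention \cref{eq:orbitcycle}, the equality $(\pi_2)_*\ol{G\cdot(x,\Phi(x))}=\ol{G\cdot\Phi(x)}$ holds automatically, because $G\to G\cdot\Phi(x)$ has degree $\#\stab_G(\Phi(x))$. The hypothesis instead supplies the class equality needed in the degree comparison above.
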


\begin{remark}
    We apply this construction for moduli space of dynamical system by \Cref{cor:iterdeg}.
\end{remark}

\begin{proposition} \label{prop:ChowPushRat}
    Let $\Phi:X_1\ratmap X_2$ be a $G$-equivariant rational map between irreducible projective varieties, and suppose that it induces a rational map $\Psi:Y_1\ratmap Y_2$ on their Chow quotients. If $(Z_1,Z_2)\in\Gamma_\Psi(k)$ and
    \[ x\in\supp Z_1\setminus I(\Phi) \]
    has finite stabilizer, then
    \[ \ol{G\cdot x}\leq Z_1,
       \qquad \Phi(x)\in\supp Z_2. \]
    If $\Phi(x)$ also has finite stabilizer, then moreover
    \[ \ol{G\cdot\Phi(x)}\leq Z_2. \]
\end{proposition}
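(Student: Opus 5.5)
The first assertion $\ol{G\cdot x}\leq Z_1$ is immediate from \Cref{prop:stabilizermultiplicity}, since $Z_1\in Y_1=X_1\chquo G$, $x\in\supp Z_1$, and $x$ has finite stabilizer. For the remaining assertions I will use a valued lift and reduce to a specialization argument. The main point is to produce a valued family in which a point converging to $x$ is carried along by the generic orbits.

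\emph{Valued lift of the graph point.} Choose a dense open $U\subset Y_1$ contained in the domain of $\Psi$ and in the image of generic orbit cycles, with generic orbits avoiding $I(\Phi)$. By \Cref{prop:indetlocus} applied to $\Psi$, there is a finite valued extension $L$ and $Z_{1,L}\in U(L)$ with $\spe Z_{1,L}=Z_1$ and $\spe\Psi(Z_{1,L})=Z_2$. Shrinking $U$, we may assume $Z_{1,L}=\ol{G\cdot y_L}$ for a generic point $y_L\in X_1(L)$, with $\Psi(Z_{1,L})=\ol{G\cdot\Phi(y_L)}$. This last equality is how $\Psi$ is induced via \Cref{prop:morphChquo}: the generic orbit is pushed forward to $\ol{G\cdot \Phi(y_L)}$ with the correct class.

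\emph{Moving the lift to $x$.} Apply \Cref{prop:PTS} to the curve $\spec \oring_L\to\chow(X_1)$ given by $Z_{1,L}$. Take $C=\ol{G\cdot x}_{\mathrm{red}}$ and $U_C=G\cdot x\setminus I(\Phi)$, which is dense open in $C$ because $x\notin I(\Phi)$ and the indeterminacy locus is $G$-stable. After a further finite extension $L'$, this gives $x_{L'}\in\supp Z_{1,L}(L')=G(\ol{L'})\cdot y_L$, still generic, with $\spe x_{L'}\in G\cdot x$. Translating by a constant element of $G(k)$, we may assume $\spe x_{L'}=x$.

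\emph{Specializing the image.} Since $\Phi$ is regular at $x$, we have $\spe\Phi(x_{L'})=\Phi(x)$. The point $\Phi(x_{L'})$ lies in $\supp\ol{G\cdot\Phi(y_L)}$, because $\Phi$ is equivariant and $x_{L'}$ lies in the $G$-orbit of $y_L$ over $\ol{L'}$. Supports specialize into supports: the Chow fiber $\cat Z$ is closed, so the point $(\Psi(Z_{1,L}),\Phi(x_{L'}))$ of $\cat Z$ specializes to $(Z_2,\Phi(x))\in\cat Z$. Hence $\Phi(x)\in\supp Z_2$. Finally, if $\Phi(x)$ has finite stabilizer, \Cref{prop:stabilizermultiplicity} applied in $Y_2$ gives $\ol{G\cdot\Phi(x)}\leq Z_2$.

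\emph{Expected obstacle.} The delicate step is the compatibility between the valued lift of the graph point and the PTS lift. We must ensure that the generic cycle $Z_{1,L}$ really is an orbit closure on which $\Psi$ acts by pushing forward $\Phi$, and that the PTS point can be chosen inside that same generic orbit rather than only in its closure. I would handle this by shrinking $U$ so that the generic points of the supports are free orbits. A generic point in $\supp Z_{1,L}$ then lies in $G\cdot y_L$ after passing to $\ol{L'}$. Moreover, the equality of classes in \Cref{prop:morphChquo} implies that the cycle $\Phi_*\ol{G\cdot y_L}$ is exactly $\ol{G\cdot\Phi(y_L)}$.
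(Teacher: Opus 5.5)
Your proposal is correct and follows essentially the same route as the paper. Both arguments take a valued lift of the graph point represented by a generic orbit, use \Cref{prop:PTS} plus a constant translation in $G(k)$ to make the lift specialize to $x$, use regularity of $\Phi$ at $x$ (with closedness of the Chow incidence) to get $\Phi(x)\in\supp Z_2$, and apply \Cref{prop:stabilizermultiplicity} for both inequalities. Your ``delicate step'' is harmless, and the paper passes over it silently by writing the lift as $\gamma_K u_K$. Indeed, $\spe x_{L'}=x\notin I(\Phi)$ already forces $x_{L'}\notin I(\Phi)$, so continuity puts $\Phi(x_{L'})$ in the support of $\ol{G\cdot\Phi(y_L)}$ even if $x_{L'}$ lies only in the orbit closure.
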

\begin{proof}
Choose, after a finite valued-field extension, a valuation curve in $\Gamma_\Psi$ specializing to $(Z_1,Z_2)$ whose generic point is represented by a point $u_K$ in the generic orbit-cycle locus of $X_1$. Since $x$ has finite stabilizer, $\ol{G\cdot x}_{\mathrm{red}}$ is a component of $\supp Z_1$ and $G\cdot x$ is dense in that component. Applying \Cref{prop:PTS} to the first universal cycle and translating by a constant element of $G(k)$, we may choose $\gamma_K\in G(K)$ such that
\[ \spe(\gamma_Ku_K)=x. \]
Because $x\notin I(\Phi)$, equivariance and the valuative criterion for the graph of $\Phi$ give
\[ \spe\Phi(\gamma_Ku_K)=\Phi(x). \]
Thus $\Phi(x)\in\supp Z_2$, and both cycle inequalities follow from \Cref{prop:stabilizermultiplicity} under the respective finite-stabilizer assumptions.
\end{proof}

\begin{definition}
For a multiset $\mathcal{D} := \{ \delta_1 ,\ldots , \delta_n \}$ of classes of effective cycles $\delta_i \in A_{\dim G}^{\mathrm{alg}}(X)$ such that $\sum_i \delta_i = \delta (G,X)$, we define the \emph{$\mathcal{D}$-stratum} of $X \chquo G$ by
\[ ( X \chquo G )_{\mathcal{D}} = \left\{ Z \in X\chquo G \,\middle|\,
\begin{aligned}
& Z = \sum_{i} n_iZ_i,\ Z_i \text{ is irreducible, }\\
& Z_i \neq Z_j (i \neq j),\ [n_iZ_i]_{\mathrm{alg}}=\delta_i
\end{aligned} \right\}. \]
\emph{The decomposition stratification} of $X\chquo G$ is 
\[ X \chquo G = \bigsqcup_{\mathcal{D}} ( X \chquo G )_{\mathcal{D}}. \]
\end{definition}

\section{Dynamical Systems on $\pone$}
\label{sec:dynsysPone}
\subsection{Convention: directions of group actions}

On both the dynamical and multiplier projective lines, we use homogeneous coordinates $[z_0:z_1]$ and the affine coordinate $z=z_1/z_0$. Thus a finite multiplier $\lambda$ is represented by $[1:\lambda]$, and $\infty=[0:1]$. Our M\"obius convention is
\[ \gamma=\begin{bmatrix}a&b\\c&d\end{bmatrix}:
z\longmapsto\frac{az+b}{cz+d},\qquad
[z_0:z_1]\longmapsto[dz_0+cz_1:bz_0+az_1]. \]
Thus matrices act on the column $(z_1,z_0)^t$. The left action on maps is $\gamma\cdot\varphi=\gamma\circ\varphi\circ\gamma^{-1}$, and on binary forms we use inverse substitution with an $\slt$-lift of $\gamma$. In particular, the roots of $\gamma\cdot f$ are the images under $\gamma$ of the roots of $f$.

A standard compactification (\cite{silverman1996p1moduli}) of the moduli space $\mrat_d$ is 
\eqns{ \ol{\mrat}_d \simeq \proj^{2d+1} & = \proj ( H^0( \pone \times \pone , \oshf_{\pone \times \pone} (d,1) )), }
so that a point $[a,b] = [a_0 : \cdots : a_d : b_0 : \cdots : b_d] \in \ol{\mrat}_d$ corresponds to the degree $(d,1)$ divisor
\eqns{ F_{[a,b]} (x_0,x_1,y_0,y_1) & = y_1(a_0x_0^{d} + \cdots + a_dx_1^{d}) - y_0(b_0x_0^{d} + \cdots  + b_dx_1^{d})}
of $\pone_x \times \pone_y$. For the factorization of $F = F_{[a,b]}$ as 
\eqns{
 F = F^{\text{main}}(x_0 , x_1,y_0, y_1) \cdot \prod_i (\alpha_{i,1} x_0 - \alpha_{i,0}x_1), 
}
with irreducible divisor $F^{\text{main}}$ of degree $(d',1)$, the map corresponding to $F^{\text{main}}$ is called the main component of $F_{[a,b]}$. Points $[ \alpha_{i,0} : \alpha_{i,1}] \in \pone$ are called holes of $F$ and its multiplicity in the factorization is called the depth of $F$ at $P$ and denoted by $\fct{depth}_P(F)$. 

The automorphism group $\fct{Aut}(\pone)=\pglt$ acts diagonally on $\pone\times\pone$. The corresponding $\slt$-linear representation on
\[ H^0(\pone\times\pone,\oshf(d,1)) \]
is $V_d\otimes V_1$, and its projectivization gives the $\pglt$-conjugation action on $\crat_d$. The center of $\slt$ acts by the same scalar on $V_{d+1}$ and $V_{d-1}$, and hence trivially on $\proj(V_{d+1}\oplus V_{d-1})$; consequently the $\slt$- and $\pglt$-orbits considered below coincide.

\subsection{Characterization of Covariants under Clebsch-Gordan isomorphisms}
We recall that the group $\slt$ acts on the spaces $V_m := k[x_0, x_1]_m$ of binary forms. In this subsection, we use $x,y,z$ for pairs of variables $(x_0,x_1),(y_0,y_1),(z_0,z_1)$. 

For any pair of pairs of variables $(x,y) = ((x_0,x_1),(y_0,y_1))$, the determinant $\Delta_{xy} :=[x,y] := x_0y_1-y_0x_1$ and the derivative operators 
\[ \Omega_{xy} := \partial_{x_0} \partial_{y_1} - \partial_{y_0} \partial_{x_1},\ \text{ and }\sigma_x^y := y_0 \partial_{x_0} + y_1 \partial_{x_1} \]
are $\slt$-equivariant.

\begin{lemma} \label{lem:CGiso}
    The Clebsch-Gordan isomorphism between $\slt$-representations $\fct{CG}: V_d \otimes V_1 \simeq V_{d+1} \oplus V_{d-1}$ is given by
    \eqns{\begin{array}{cccc}
    \fct{CG}: & ( k [x_0,x_1]_{d} \otimes k [y_0, y_1]_1 = ) V_d \otimes V_1 & \simeq & V_{d+1} \oplus V_{d-1} \\
    & F(x,y) & \mapsto & ( F(z,z) , (\Omega_{xy}F)(z,z)) \\
    & \frac{1}{d+1}( \sigma_{x}^y f_+(x) + \Delta_{xy} f_-(x) )  & \mapsfrom & (f_+(z) , f_-(z)).  \\
    \end{array}
    }
\end{lemma}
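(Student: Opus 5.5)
The plan is to verify three things: both maps in \Cref{lem:CGiso} are $\slt$-equivariant, they are mutually inverse, and the decomposition $V_d\otimes V_1\simeq V_{d+1}\oplus V_{d-1}$ is the standard one. Equivariance is immediate once we recall that $\Delta_{xy}$, $\Omega_{xy}$ and the polarization $\sigma_x^y$ are $\slt$-equivariant (as stated just before the lemma). In addition, restriction to the diagonal $x=y=z$ commutes with the diagonal action. Hence both $F\mapsto(F(z,z),(\Omega_{xy}F)(z,z))$ and $(f_+,f_-)\mapsto\frac{1}{d+1}(\sigma_x^yf_++\Delta_{xy}f_-)$ are $\slt$-maps. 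Degrees match: $\sigma_x^y f_+$ has bidegree $(d,1)$ when $f_+\in V_{d+1}$, and so does $\Delta_{xy}f_-$ when $f_-\in V_{d-1}$. Since $\chara k=0$, both sides have dimension $2d+2$, so it suffices to show that the backward map followed by the forward map is the identity on $V_{d+1}\oplus V_{d-1}$.

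For this composition I would compute four terms, treating $F=\frac{1}{d+1}(\sigma_x^yf_++\Delta_{xy}f_-)$ summand by summand.
\begin{itemize}
\item First, $(\sigma_x^yf_+)(z,z)=(z_0\partial_0+z_1\partial_1)f_+(z)=(d+1)f_+(z)$ by Euler's formula.
\item Second, $\Delta_{xy}$ vanishes on the diagonal, so $(\Delta_{xy}f_-)(z,z)=0$.
\end{itemize}
Together these give the first component $f_+$.

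For the second component I would apply $\Omega_{xy}=\partial_{x_0}\partial_{y_1}-\partial_{y_0}\partial_{x_1}$.
\begin{itemize}
\item On $\sigma_x^yf_+=y_0\partial_{0}f_+(x)+y_1\partial_1f_+(x)$ we get $\partial_{y_1}\mapsto\partial_1 f_+$ and $\partial_{y_0}\mapsto\partial_0f_+$. Hence $\Omega(\sigma f_+)=\partial_0\partial_1f_+-\partial_1\partial_0f_+=0$.
\item On $\Delta_{xy}f_-(x)=(x_0y_1-y_0x_1)f_-(x)$ we get $\partial_{y_1}\mapsto x_0f_-$ and $\partial_{y_0}\mapsto -x_1f_-$. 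Hence
\[ \Omega(\Delta f_-)=\partial_{x_0}(x_0f_-)+\partial_{x_1}(x_1f_-)=(2+(d-1))f_-=(d+1)f_-. \]
\end{itemize}
Dividing by $d+1$ gives $f_-$, so the composition is the identity.

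Consequently the forward map is surjective, hence bijective by the dimension count, and the two maps are inverse isomorphisms. The step needing most care is the sign and normalization bookkeeping in $\Omega_{xy}$ applied to $\Delta_{xy}f_-$, where the Euler identity on $V_{d-1}$ produces the factor $d+1$. I expect no real obstacle beyond that, except to note that the factor $\frac{1}{d+1}$ requires $\chara k=0$ (or at least $\chara k\nmid d+1$).
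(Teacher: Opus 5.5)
Your proof is correct and is exactly the explicit computation that the paper leaves to the reader. It combines equivariance of $\Delta_{xy}$, $\Omega_{xy}$, $\sigma_x^y$ and of restriction to the diagonal, Euler's identity producing the factor $d+1$ in both components of the composite $(f_+,f_-)\mapsto F\mapsto(f_+,f_-)$, and a dimension count; note that $\dim=2d+2$ holds in any characteristic, so only the division by $d+1$ uses the hypothesis on $\chara k$, as you remark at the end.
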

\begin{proof}
   The isomorphism can be checked by an explicit computation.
\end{proof}
\begin{remark}
    The isomorphism depends on the assumption $\chara k = 0$.
\end{remark}
In particular, for $f = [f_0 : f_1] \in \ol{\mrat}_d$, $F(x,y) = y_1f_0(x) - y_0f_1(x)$ implies that
\eqnsl{ f_0(x) & = a_0x_0^d + \cdots + a_dx_1^d = \frac{1}{d+1}( \partial_{x_1}f_+(x) + x_0f_-(x) ),  \label{eq:f0f1tofpfn}\\
f_1(x) & = b_0x_0^d + \cdots + b_dx_1^d = \frac{1}{d+1}( -\partial_{x_0}f_+(x) + x_1f_-(x)). \nonumber }

\begin{corollary} \label{cor:OrdDepth}
    Let $P\in \pone$, $[F(x,y)] \in \ol{\mrat}_d$ and $\fct{CG}(F) = (f_+,f_-)$. 
    We have
    \[ \min ( \ord_P f_+ , \ord_P f_-+1) = \fct{depth}_P [F] + \begin{cases}
        1 & (\text{if }F^{\mathrm{main}}(P) = P )\\
        0 & (\text{otherwise}).
    \end{cases}\]
\end{corollary}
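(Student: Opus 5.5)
The identity is local at $P$ and both sides are $\glt$-invariant, so the plan is to normalize $P$ and compute directly with the Clebsch--Gordan formulas of \Cref{lem:CGiso}. First I reduce to $P=[1:0]$, the zero of the linear form $x_1$. This is legitimate because $\fct{CG}$ is $\slt$-equivariant, and orders of vanishing, depths and fixedness of $P$ under $F^{\mathrm{main}}$ are all transported by the conjugation action (roots of $\gamma\cdot f$ are $\gamma$-images of roots of $f$).

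With $P=[1:0]$, put $m:=\depth_P[F]$ and write
\[ F(x,y)=y_1f_0(x)-y_0f_1(x),\qquad f_i=x_1^m f_i' \quad (i=0,1), \]
where $f_0',f_1'$ have degree $d-m$ and do not both vanish at $P$. The map $[f_0':f_1']$ differs from $F^{\mathrm{main}}$ only by hole factors not vanishing at $P$. Hence $F^{\mathrm{main}}(P)=[f_0'(P):f_1'(P)]$, and $F^{\mathrm{main}}(P)=P$ if and only if $f_1'(1,0)=0$.

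Now compute both components. Directly,
\[ f_+(z)=F(z,z)=z_1^m\bigl(z_1f_0'(z)-z_0f_1'(z)\bigr). \]
Using $\partial_{y_1}F=x_1^mf_0'$ and $\partial_{y_0}F=-x_1^mf_1'$, I get
\[ f_-=x_1^m\bigl(\partial_{x_0}f_0'+\partial_{x_1}f_1'\bigr)+m\,x_1^{m-1}f_1'. \]
Thus $\ord_Pf_+\geq m$ and $\ord_Pf_-+1\geq m$ always. The rest is a case split.

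\emph{Case $f_1'(P)\neq0$.} Then $\ord_Pf_+=m$ exactly, since the bracket evaluates to $-f_1'(1,0)\neq0$. Also $\ord_Pf_-+1\geq m$, with equality when $m\ge1$ and trivially sufficient when $m=0$. So the minimum is $m$, as required.

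\emph{Case $f_1'(P)=0$.} Write $f_1'=x_1g$. Then $\ord_Pf_+\geq m+1$, and $f_-$ is divisible by $x_1^m$, so the minimum is at least $m+1$. The main obstacle is the upper bound: I must rule out simultaneous extra vanishing of both forms.

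The coefficient of $x_1^{m+1}$ in $f_+$ at $P$ is $f_0'(P)-g(P)$. The coefficient of $x_1^m$ in $f_-$ at $P$ is $\partial_{x_0}f_0'(P)+g(P)+m\,g(P)$. By Euler's formula at $(1,0)$, this equals $(d-m)f_0'(P)+(m+1)g(P)$.

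If both coefficients vanished, substituting $g(P)=f_0'(P)$ would give $(d+1)f_0'(P)=0$. Since $\chara k=0$, this forces $f_0'(P)=0$. Together with $f_1'(P)=0$, this contradicts the fact that $f_0',f_1'$ do not both vanish at $P$. Hence the minimum is exactly $m+1$, completing the proof.
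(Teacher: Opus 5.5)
Your proof is correct, and it follows the route the paper intends. The paper gives no written proof; it treats the statement as an immediate consequence of the explicit Clebsch--Gordan formulas in \Cref{lem:CGiso}, which is what you compute with: the forward formulas for the lower bound, then a leading-coefficient and Euler computation for the upper bound.

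The inverse formulas \cref{eq:f0f1tofpfn} displayed just before the statement shorten your upper bound. Put $c:=\min(\ord_Pf_+,\ord_Pf_-+1)$ with $P=[1:0]$. Those formulas give $x_1^{c-1}\mid f_0$ and $x_1^{c}\mid f_1$, hence $m\geq c-1$, and $F^{\mathrm{main}}(P)=P$ whenever $m=c-1$. The factor $\frac{1}{d+1}$ in them plays the role of your step $(d+1)f_0'(P)=0$.
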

    
For a map $\varphi \in \mrat_d$, the multiplier of a fixed point $\alpha \in \pone$ is the scalar by which the differential $d\varphi_\alpha$ acts on $T_\alpha\pone$; in a local coordinate it is the derivative
\[ \lambda_\varphi (\alpha ) := \varphi'(\alpha ). \]
In general, the multiplier of a periodic orbit is defined as the fixed point multiplier of the corresponding iteration. Multiplier values are independent of the choice of coordinate on $\pone$; in particular, their collections are invariant under the $\pglt$-action.
\begin{proposition}
    Let $\varphi = \varphi_F = [F(x,y)] \in \mrat_d$ and $\fct{CG}(F) = (f_+,f_-)$.
    \begin{enumerate}
        \item \label{item:dF} The derivative of $\varphi$ as the rational function $\varphi : \pone \to \pone$ with respect to the coordinate $\ol{z} = z_1/z_0$ is \eqnl{ \label{eq:dF} \varphi'( \ol{z}) = \frac{z_0}{f_0^2} ( f_0 \partial_{1}f_1 - f_1 \partial_{1}f_0) (z_0,z_1). }
        \item \label{item:lFz} The multiplier of $\varphi$ at a fixed point $P \in \pone$ of $F$ is given on the two standard affine charts by
        \eqnl{ \label{eq:lFz}
        \lambda_{\varphi}(P) =
        \begin{cases}
        \displaystyle \frac{ f_- - (d\partial_1f_+)/z_0 }{f_- + (\partial_1 f_+)/z_0}(P) & (z_0(P)\neq 0),\\[6pt]
        \displaystyle \frac{ f_- + (d\partial_0f_+)/z_1 }{f_- - (\partial_0 f_+)/z_1}(P) & (z_1(P)\neq 0).
        \end{cases} }
    \end{enumerate}
\end{proposition}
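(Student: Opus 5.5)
The plan is a direct computation in the Clebsch--Gordan coordinates. For \cref{item:dF}, write $\varphi$ in the affine coordinate $\ol z=z_1/z_0$ as $\varphi(\ol z)=f_1(1,\ol z)/f_0(1,\ol z)$, since $\varphi$ sends $[z_0:z_1]$ to $[f_0:f_1]$. The quotient rule gives $(f_0\partial_1f_1-f_1\partial_1f_0)/f_0^2$ evaluated at $(1,\ol z)$. This expression is homogeneous of degree $(2d-1)-2d=-1$ in $(z_0,z_1)$, so multiplying it by $z_0$ produces a degree-zero expression. That expression agrees with the dehomogenized one at $z_0=1$, which gives \cref{eq:dF}.

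For \cref{item:lFz}, first identify the fixed points. By \Cref{lem:CGiso}, $f_+(z)=F(z,z)=z_1f_0(z)-z_0f_1(z)$, so $P$ is fixed exactly when $f_+(P)=0$. Next check that the formula is meaningful. Suppose $z_0(P)\neq0$. If $f_0(P)=0$, then the fixed-point relation $z_1f_0=z_0f_1$ forces $f_1(P)=0$. This contradicts $\varphi\in\mrat_d$, since $f_0$ and $f_1$ have no common zero. Hence $f_0(P)\neq0$, so $P$ is a finite point of the chart and $\lambda_\varphi(P)=\varphi'(\ol z(P))$.

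Now substitute $f_1=(z_1/z_0)f_0$ at $P$ into \cref{eq:dF}. The factor $f_0$ cancels and we get
\[ \lambda_\varphi(P)=\frac{z_0\partial_1f_1-z_1\partial_1f_0}{f_0}(P). \]
Then use \cref{eq:f0f1tofpfn}, namely $(d+1)f_0=\partial_1f_+ + z_0f_-$ and $(d+1)f_1=-\partial_0f_+ + z_1f_-$. Differentiating with respect to $z_1$ gives
\[ (d+1)(z_0\partial_1f_1-z_1\partial_1f_0)=z_0f_- -\bigl(z_0\partial_0\partial_1f_+ + z_1\partial_1^2f_+\bigr)+z_0z_1\partial_1f_- - z_1z_0\partial_1f_-. \]
The two $\partial_1f_-$ terms cancel. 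Euler's identity for the degree-$d$ form $\partial_1f_+$ turns the bracket into $d\,\partial_1f_+$. So the numerator is $z_0f_- - d\,\partial_1f_+$ and the denominator is $z_0f_- + \partial_1f_+$, both scaled by $d+1$. Dividing numerator and denominator by $z_0$ yields the first line of \cref{eq:lFz}.

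The second chart ($z_1(P)\neq0$) is handled the same way. Use the coordinate $z_0/z_1$, the relation $f_0=(z_0/z_1)f_1$ at $P$, and Euler's identity for $\partial_0f_+$. The sign changes come from $\Delta_{xy}$ in \Cref{lem:CGiso}. Alternatively, one can conjugate by the involution swapping $z_0$ and $z_1$ and use the invariance of multipliers.

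I expect no real obstacle here. The only delicate points are keeping the signs and the Euler identities straight, and checking that the denominators are nonzero, which is the argument above that $f_0(P)\neq0$ (resp. $f_1(P)\neq0$) at a fixed point in the relevant chart.
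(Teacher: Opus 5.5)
Your proposal is correct and follows essentially the paper's route: dehomogenize and use the quotient rule for part (1). For part (2), use $f_+(P)=0$ to reduce \cref{eq:dF} to $(z_0\partial_1f_1-z_1\partial_1f_0)/f_0$ at $P$, then rewrite in terms of $f_\pm$ via \cref{eq:f0f1tofpfn}. The differences are cosmetic: you differentiate \cref{eq:f0f1tofpfn} and apply Euler's identity to $\partial_1f_+$, where the paper instead differentiates $f_+=z_1f_0-z_0f_1$ to get $z_0\partial_1f_1-z_1\partial_1f_0=f_0-\partial_1f_+$; and you justify $f_0(P)\neq0$ explicitly, while the paper instead checks that the two chart formulas agree on the overlap.
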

\begin{proof}
    \cref{item:dF} This is the homogenization of the derivative of a quotient.

    \cref{item:lFz} We recall that $z_1 f_0-z_0 f_1=f_+$ and $f_+(P)=0$ for a fixed point $P=[\alpha_0:\alpha_1]$. On the chart $z_0\neq0$, \cref{eq:dF} gives
    \eqns{
    \lambda_\varphi(P)
    &=\left(\frac{z_0\partial_1f_1-z_1\partial_1f_0}{f_0}
      +\frac{f_+\partial_1f_0}{f_0^2}\right)(P)\\
    &=\frac{f_0-\partial_1f_+}{f_0}(P)
    =\frac{z_0f_--d\partial_1f_+}{z_0f_-+\partial_1f_+}(P),
    }
    where the last equality follows from \cref{eq:f0f1tofpfn}. Dividing the final quotient by $z_0$ gives the first formula in \cref{eq:lFz}.

    On the chart $z_1\neq0$, using the coordinate $z_0/z_1$ instead gives
    \eqns{
    \lambda_\varphi(P)
    &=\left(\frac{z_1\partial_0f_0-z_0\partial_0f_1}{f_1}
      -\frac{f_+\partial_0f_1}{f_1^2}\right)(P)\\
    &=\frac{f_1+\partial_0f_+}{f_1}(P)
    =\frac{z_1f_-+d\partial_0f_+}{z_1f_--\partial_0f_+}(P).
    }
    Dividing by $z_1$ gives the second formula. On the overlap, Euler's identity and $f_+(P)=0$ imply
    \[ \frac{\partial_0f_+}{z_1}(P)=-\frac{\partial_1f_+}{z_0}(P), \]
    so the two expressions agree.
\end{proof}

\subsection{Markings of factors and boundary of orbits}

Let $B := (\pone )^{2d}$, let $H_i$ be the hyperplane divisor for the $i$-th coordinate of $B$, and put $D_+ := H_1 + \cdots + H_{d+1}$ and $D_- := H_{d+2} + \cdots + H_{2d}$. We use the quotient convention for projective bundles and define
\[ X := \pmcrat_d := \proj_B\!\left(\oshf_B(D_+) \oplus \oshf_B(D_-)\right), \qquad \pi_B:X\to B. \]
The products of the universal linear factors give bundle morphisms
\eqns{
\oshf_B(-D_+) &\longrightarrow V_{d+1}\otimes\oshf_B, &
\oshf_B(-D_-) &\longrightarrow V_{d-1}\otimes\oshf_B.
}
Their direct sum is fiberwise injective. Equivalently, after identifying a one-dimensional quotient of
$\oshf_B(D_+)\oplus\oshf_B(D_-)$ with a line in its dual, it induces the product morphism
\eqns{ \Pi : X & \to \proj (V_{d+1} \oplus V_{d-1} ) \simeq  \ol{\mrat}_d \\
( [u:v] , ([\alpha_{i,0}: \alpha_{i,1}])_{i = 1}^{2d} ) & \mapsto \left[  u \prod_{i = 1}^{d+1} (\alpha_{i,1} x_0- \alpha_{i,0}x_1) : v \prod_{i = d+2}^{2d} (\alpha_{i,1} x_0 - \alpha_{i,0}x_1) \right]. }

The morphism $(\pi_B, \Pi) : X \to B \times \ol{\mrat}_d$ is an embedding and we can regard $X$ as
\eqnsl{ X & \simeq \left\{ (b,[f_+:f_-]) \in B \times \ol{\mrat}_d \middle| \Pi_+(b) \text{ divides }f_+,\ \Pi_-(b) \text{ divides }f_-  \right\}, \nonumber \\
& \text{ where }\Pi_+ ( [\alpha_{1,0} : \alpha_{1,1}], \ldots , [\alpha_{2d , 0} : \alpha_{2d,1}]) = \prod_{i=1}^{d+1} (\alpha_{i,1}x_0 - \alpha_{i,0} x_1)  \label{eq:fpfmdiv} \\
& \text{ and }\Pi_- ( [\alpha_{1,0} : \alpha_{1,1}], \ldots , [\alpha_{2d , 0} : \alpha_{2d,1}]) = \prod_{i=d+2}^{2d} (\alpha_{i,1}x_0 - \alpha_{i,0} x_1). \nonumber
}
In order to describe orbits and to compute the cycle classes of orbits, we introduce some combinatorial notation. 

For a point $x = (P_1, \ldots , P_{2d}, [f_+ : f_-] ) \in X(k)$ and $Q \in \pone$, we set
\eqnl{ \label{eq:abcdef}
\begin{aligned}
S_Q(x) & := \{ j \mid Q = P_j,\ j \in [2d] \}\\
& \text{ with a decomposition } S_Q(x) = S^+_Q(x) \sqcup S^-_Q(x) \text{ where }\\
S^+_Q(x) & := S_Q(x) \cap [d+1],\ S^-_Q(x) := S_Q(x) \cap [d+2, 2d], \\
a_Q(x) & := \# S^+_Q(x), \ b_Q(x) := \# S^-_Q(x)\text{ and } \\
c_Q(x) & := \min (\ord_Q f_+ , \ord_Q f_-+1) =\begin{cases} b_Q(x)+1 & (f_+ = 0) \\
a_Q(x) & (f_-=0) \\
\min(a_Q(x),b_Q(x)+1) & (f_+f_- \neq 0).
\end{cases}
\end{aligned}
}
We sometimes omit $x$ if it is apparent, and for any $b \in B = (\pone )^{2d}$, we similarly define $S_Q(b), S_Q^+(b), S_Q^-(b), a_Q(b)$ and $b_Q(b)$.

The $\pglt$-action on $B = (\pone )^{2d}$ is the diagonal action, and thus
\eqn{ \dim G \cdot (P_1,\ldots , P_{2d}) = \min (3, \# \{ P_1,\ldots , P_{2d} \} ). }
In particular, the small diagonal of $B$ is the unique $1$-dimensional orbit we denote by $B_1$, and $B$ has $2^{2d-1}-1$ disjoint $2$-dimensional orbits indexed by the splits $A = \{ A_1, A_2 \}$ of $[2d]$, 
\eqnl{ \label{eq:bsplit} B_A := \{ (P_1 ,\ldots , P_{2d} ) \mid \exists Q_1,Q_2 \in \pone, Q_1 \neq Q_2,\ P_i = Q_j \text{ if }i \in A_j \}.
}
For a split $A = \{ A_1,A_2 \}$ of $[2d]$, we put 
\[ d(A) := | \# (A_1 \cap [1,d+1]) - \# (A_1 \cap [d+2,2d]) - 1|. \]
We write $B_3$ for the open subset of $B$ made of all $3$-dimensional orbits.
Then we can describe the orbit decomposition of $X$ as follows.
\begin{proposition} \label{prop:orbdecomp}
For a point $b \in B$, the orbit decomposition of $\pi_B^{-1}(G\cdot b) ( \subset X)$ is
\eqns{
\pi_B^{-1}(G\cdot b) = \begin{cases}
    \{ f_+ = 0 \} \sqcup \{ f_- = 0 \} \sqcup \{ f_+ f_- \neq 0 \} & \text{if} \begin{cases} b \in B_1 \text{ or}\\
     (b \in B_A \text{ and }d(A) \neq 0),\end{cases}\\
    \bigsqcup_{x \in \pi_B^{-1}(b)} G \cdot x & \text{if} \begin{cases} b \in B_3 \text{ or}\\
     (b \in B_A \text{ and }d(A) = 0).\end{cases}\\
\end{cases}
}
\end{proposition}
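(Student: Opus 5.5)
The plan is to reduce to the stabilizer of $b$ in $G$ acting on the fiber $\pi_B^{-1}(b)$. Since $\pi_B$ is $G$-equivariant, $\pi_B^{-1}(G\cdot b)=G\times_{\stab_G(b)}\pi_B^{-1}(b)$. Hence the $G$-orbits in $\pi_B^{-1}(G\cdot b)$ correspond bijectively to the $\stab_G(b)$-orbits on the fiber. The fiber $\pi_B^{-1}(b)$ is the line $\proj(k u\oplus k v)$, with coordinates $[u:v]$ so that $f_+=u\Pi_+(b)$ and $f_-=v\Pi_-(b)$. It therefore suffices to compute how $\stab_G(b)$ acts on $[u:v]$, using the $\slt$-action on binary forms by inverse substitution.

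For $b\in B_3$, the stabilizer $\stab_G(b)$ fixes at least three distinct points of $\pone$, so it is trivial (up to the center, which acts trivially on $\proj(V_{d+1}\oplus V_{d-1})$). Every point of the fiber is then its own orbit, giving the second case.

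For $b\in B_A$, the positions are two distinct points $Q_1,Q_2$, and after a translation we take $Q_1=0$, $Q_2=\infty$. The stabilizer contains the torus $\gamma_t=\mathrm{diag}(t,t^{-1})$ in $\slt$, together with the swap when $Q_1,Q_2$ may be interchanged; the swap only affects finitely many orbits and does not matter here. The factor $\Pi_+(b)$ is, up to scalar, $x_1^{a}x_0^{d+1-a}$ with $a=\#(A_1\cap[1,d+1])$. Similarly $\Pi_-(b)=x_1^{a'}x_0^{d-1-a'}$ with $a'=\#(A_1\cap[d+2,2d])$.

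Under $\gamma_t$ a monomial $x_1^{i}x_0^{n-i}$ is scaled by $t^{\pm(2i-n)}$. So $[u:v]\mapsto[t^{\epsilon(2a-d-1)}u:t^{\epsilon(2a'-d+1)}v]$, and the ratio $v/u$ is scaled by $t^{\pm 2(a'-a+1)}=t^{\pm 2(\pm d(A))}$. If $d(A)=0$ the torus acts trivially on the fiber, so again each fiber point is a separate orbit. If $d(A)\neq 0$ the torus acts with exactly three orbits: $\{u=0\}$, $\{v=0\}$ and the open set $\{uv\neq 0\}$. These are precisely $\{f_+=0\}$, $\{f_-=0\}$ and $\{f_+f_-\neq0\}$.

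For $b\in B_1$, all points equal $Q$, and we take $Q=\infty$. The stabilizer is the Borel subgroup, containing the torus above with $a=d+1$ and $a'=d-1$. This gives exponent difference $2(a'-a+1)=-2\neq0$, so the torus already produces the three orbits. The unipotent part fixes $x_1$ up to the relevant substitution and hence fixes both monomials $x_1^{d+1}$ and $x_1^{d-1}$, so it does not merge them.

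The main obstacle is bookkeeping. One must check that the sign and exponent conventions under inverse substitution really give the exponent $2(a'-a+1)$, which vanishes exactly when $d(A)=0$ (using $\#A_1$ versus $\#A_2$ symmetrically, since $d(A)$ computed with $A_2$ agrees). One must also make sure the center and the $\pglt$ versus $\slt$ distinction do not alter the orbit count.
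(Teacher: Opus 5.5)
Your proposal is correct and follows essentially the paper's route: pass to the $\stab_G(b)$-action on the fiber $\pi_B^{-1}(b)\simeq\pone$, normalize the marked points into $\{0,\infty\}$, and note that the diagonal-torus weights of $\Pi_+(b)$ and $\Pi_-(b)$ coincide exactly when $d(A)=0$. The paper only says the $B_1$ case is similar, and you spell it out via the unipotent radical fixing the fiber pointwise; there are two harmless slips: the swap never lies in the stabilizer of the ordered tuple $b$ (it would move every $P_i$), and with $Q=\infty$ in the $B_1$ case the forms are $x_0^{d+1},x_0^{d-1}$, i.e.\ $a=a'=0$ rather than $a=d+1,\ a'=d-1$, though the weight difference is $\pm2\neq0$ either way.
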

\begin{proof}
    We recall that the orbit decomposition of $\pi_B^{-1}(G\cdot b)$ is the orbit decomposition of $\pi_B^{-1}(b)$ with respect to the stabilizer group action.
    We assume that $b \in B_A$ for some split $A = \{A_1 , A_2 \}$. Since the orbit decomposition of fiber does not depend on the choice of a point on orbit, we assume that $b = (P_1,\ldots , P_{2d} ) \in \{ 0 , \infty \}^{2d} ( \subset ( \pone )^{2d})$. By \cref{eq:fpfmdiv}, a point $x \in \pi_B^{-1}(b)$ can be described as
    \[ x = \left( b, [ u x_0^{a_\infty} x_{1}^{a_0} : v x_0^{b_\infty} x_{1}^{b_0}] \right). \]
    The stabilizer group of $b$ is the diagonal torus $G_b = \begin{pmatrix} t &  0\\ 0 & t^{-1}  \end{pmatrix} \subset \slt$, so the action of the stabilizer group is given as 
    \[ \begin{pmatrix} t &  0\\ 0 & t^{-1}  \end{pmatrix} \cdot x = ( b,  [ t^{a_\infty -a_0 }u x_0^{a_\infty} x_{1}^{a_0} : t^{b_\infty - b_0}v x_0^{b_\infty} x_{1}^{b_0}]). \]
    Here we have 
    \[ a_\infty + a_0 = d+1 \text{ and } b_\infty + b_0 = d-1, \]
    thus
    \[ a_\infty - a_0 = b_\infty - b_0 \text{ if and only if } |a_0 - b_0 - 1| = 0. \]
    By \cref{eq:abcdef,eq:bsplit}, $|a_0 - b_0 - 1| = d(A)$ for $x \in \pi_B^{-1}(B_A)$. So $x$ is $G_b$-invariant if and only if $d(A) = 0$. Otherwise, $\pi_B^{-1}(b)$ is decomposed into three orbits $\{ f_+ = 0 \} \sqcup \{f_- = 0 \} \sqcup \{f_+ f_- \neq 0\}$. Here these conditions are stable under the $G$-action, so this also gives the orbit decomposition of $\pi_B^{-1}(G\cdot b)$. The case of $b \in B_1$ is similar.
\end{proof}

\begin{corollary} \label{cor:boundaryorbit}Let $x = (b, [f_+:f_- ] ) \in X$ be a point in $X$ such that $\dim G \cdot b = 3$. Let $x' \in \ol{G \cdot x}_{\mathrm{red}} \setminus G\cdot x$.
    \begin{enumerate}
        \item \label{item:ocbase} We have either $\pi_B(x') \in B_1$ or $\pi_B(x') \in B_A$ for the split $A = \{ S_Q(x) , [2d] \setminus S_Q(x) \}$ for some $Q \in \pone$.
        \item \label{item:ocjoint} For each split $A = \{S_Q(x) , [2d] \setminus S_Q(x) \}$, $\pi_B^{-1}(B_A) \cap \ol{G \cdot x}_{\mathrm{red}}$ is a single two-dimensional $G$-orbit. In particular, if $f_+ f_- \neq 0$, we have
        \[ \begin{cases}
            f_+' = 0 & (a_Q(x) - b_Q(x) - 1 > 0) \\
            f_+'f_-' \neq 0 & (a_Q(x) - b_Q(x) - 1 = 0) \\
            f_-' = 0 & (a_Q(x) - b_Q(x) - 1 < 0).
        \end{cases} \]
    \end{enumerate}
\end{corollary}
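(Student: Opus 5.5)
We need to prove \Cref{cor:boundaryorbit}. The plan is to take limits along valuative curves in $G$ and use the orbit decomposition of \Cref{prop:orbdecomp}.

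\textbf{Reduction to one-parameter limits.} Since $\ol{G\cdot x}_{\mathrm{red}}$ is the closure of an orbit, \Cref{prop:valucriterion} applied to the dense open orbit gives, for any boundary point $x'$, an element $\gamma_t\in G(K)$ with $\spe(\gamma_t\cdot x)=x'$. Because $\pi_B$ is equivariant and proper, we get $\pi_B(x')=\spe(\gamma_t\cdot b)$. This reduces the base part of the statement to the boundary of $G\cdot b$ inside $B=(\pone)^{2d}$.

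\textbf{Part (1).} Write $\gamma_t\cdot b=(\gamma_t P_1,\ldots,\gamma_t P_{2d})$. The configuration $\{P_j\}$ of distinct points $Q_1,\ldots,Q_r$ has $r\geq 3$. The specialization of $\gamma_t$ acting on $\pone_K$ (via the Berkovich picture: $\gamma_t^{-1}$ sends the Gauss point to a type II point $\zeta$) is one of two kinds. Either it is an isomorphism, which keeps the point inside $G\cdot b$. Or it collapses all directions at $\zeta$ except possibly one to a single point. Concretely, after Cartan decomposition $\gamma_t=k_1\,\mathrm{diag}(t^{n},1)\,k_2$ with $k_i\in G(A)$, the limit map on $\pone$ is constant away from at most one point $Q$, where the "exceptional" fiber goes to a second point. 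Hence the limit configuration either has all $P_j$ equal (this is $B_1$), or has exactly two values, separating $S_Q(x)$ from its complement. That is the split $A=\{S_Q,[2d]\setminus S_Q\}$.

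\textbf{Part (2).} Fix $Q$ and choose coordinates with $Q=0$ and some other $P_j=\infty$ not in $S_Q$. Take the one-parameter subgroup $\lambda(t)=\mathrm{diag}(t^{-1},t)$ (or its inverse) contracting $\pone\setminus\{0\}$ to $\infty$ while fixing $0$. Then $\lambda(t)\cdot b\to b_A\in B_A$. Compute the limit of $[f_+:f_-]$ directly: write $f_+=x_1^{a_Q}g_+$ and $f_-=x_1^{b_Q}g_-$ with $g_\pm(0)\neq0$. Under the torus, the leading terms scale as $t^{(d+1)-2a_Q}$ and $t^{(d-1)-2b_Q}$ respectively. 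These exponents differ by $2(a_Q-b_Q-1)$. So the limit is $[f_+':0]$, $[f_+':f_-']$ with both nonzero, or $[0:f_-']$, according to the sign of $a_Q-b_Q-1$. This gives the displayed trichotomy (up to matching the paper's sign convention with the action formula in \Cref{prop:orbdecomp}), and shows that $\pi_B^{-1}(B_A)\cap\ol{G\cdot x}$ is nonempty and contains that orbit.

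\textbf{Main obstacle: uniqueness of the orbit.} It remains to show that every boundary point over $B_A$ lies in this same orbit. Using the Cartan decomposition again, $\gamma_t\cdot b\to B_A$ forces $\gamma_t=k_1\lambda(t)^n k_2$ with $k_2\in G(A)$. The reduction of $k_2$ must fix $Q$, and any residual freedom only moves within the stabilizer of $b_A$. The limit of $[f_+:f_-]$ is then governed by the same leading-order comparison, with $g_\pm$ replaced by the values at $Q$ and the orders unchanged. So the limit lies in the same stratum among the three cases of \Cref{prop:orbdecomp}, with the base orbit $G\cdot b_A$. When $d(A)\neq0$ that stratum is a single orbit. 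When $d(A)=0$, I must additionally check that the limit value $[g_+(Q):g_-(Q)]$ (suitably normalized) is independent of $\gamma_t$. This is the delicate step, and I would verify it by comparing two lifts explicitly in the torus coordinates.
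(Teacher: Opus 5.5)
Your route is the paper's own: limits of one-parameter degenerations, with the trichotomy read off from torus weights. Part (1) and the existence half of Part (2) are fine. Since the $P_j$ are $k$-rational, coordinates in one $S_Q$ stay equal, so a Cartan factor with $n\neq0$ gives a limit base in $B_1$ or in $B_{\{S_Q,S_Q^c\}}$, while $n=0$ gives a point of $G\cdot x$.

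The gap is the uniqueness half of Part (2), which is the real content of ``a single two-dimensional orbit.'' You explicitly leave $d(A)=0$ unverified, and the reduction you sketch is not correct as stated. In $\gamma_t=k_1\lambda(t)^nk_2$, the part of $k_2$ congruent to $1$ modulo $\maxid_A$ is not absorbed by the stabilizer of $b_A$. In coordinates with $Q=0$, take $h:z\mapsto z+t^j$. Then $\lambda(t)^nh$ sends $0$ to $t^{j-2n}$, so for $j<2n$ the limit falls into $B_1$. For $j=2n$ one has $\lambda(t)^nh=\tau\circ\lambda(t)^n$ with $\tau:z\mapsto z+1$, which does not fix $b_A$. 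So ``orders unchanged, $g_\pm$ replaced by values at $Q$'' is unjustified for an arbitrary $\gamma_t$. Without it you control neither which of $\{f'_+=0\}$ and $\{f'_-=0\}$ occurs when $d(A)\neq0$ (both are two-dimensional orbits over $B_A$), nor the fiber value $[u:v]$ when $d(A)=0$.

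The missing idea is to index limits by the type~II point $\xi:=\gamma_t^{-1}\zeta$ rather than by $\gamma_t$. If $\gamma^{-1}\zeta=\gamma'^{-1}\zeta$, then $\gamma'\gamma^{-1}\in\pglt(\oring_L)$ and $\spe(\gamma'\cdot x)=\ol{\gamma'\gamma^{-1}}\cdot\spe(\gamma\cdot x)$. So the orbit of the limit depends only on $\xi$; this is the first step in the proof of \Cref{lem:Xmaxdimorbit}.

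The partition of $[2d]$ cut out by the limit configuration is the partition of the $P_j$ by tangent directions at $\xi$. Hence the limit lies over $B_{\{S_Q,S_Q^c\}}$ exactly when $\xi$ lies on the open segment $(\zeta,Q)$. With $Q=0$ these points are $D(0,|c|^2)=\lambda(c)^{-1}\zeta$ with $v_K(c)>0$. Since $s\mapsto\lambda(s)\cdot x$ extends to a morphism $\af^1\to X$, one has $\spe(\lambda(c)\cdot x)=\lim_{s\to0}\lambda(s)\cdot x$ for every such $c$.

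Therefore every point of $\ol{G\cdot x}_{\mathrm{red}}$ over $B_A$ lies in the single orbit $G\cdot\lim_{s\to0}\lambda(s)\cdot x$, and no comparison of lifts is needed. Your weight computation then identifies this orbit, giving the trichotomy. It is two-dimensional because it maps onto $B_A$ and lies in the boundary of the three-dimensional orbit $G\cdot x$.
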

\begin{proof}
    \cref{item:ocbase} This is a standard fact for the pushforward $(\pi_B)_*$.
    \cref{item:ocjoint} This follows from the direct computation by taking limits. The resulting orbit maps onto the two-dimensional orbit $B_A$ and is a proper boundary orbit of the three-dimensional orbit $G\cdot x$, so it is two-dimensional.
\end{proof}

\subsection{Computation of the cycle class of orbits}

Let $\xi := c_1(\oshf_X(1))$ for the $\pone$-bundle structure $\pi_B : X \to B$. From the projective bundle formula, the Chow ring is given by
\eqnsl{
A^*(X) & = A^*(B)[\xi ] / (\xi^2 - c_1(\oshf_B(D_+)\oplus\oshf_B(D_-))\xi + c_2(\oshf_B(D_+)\oplus\oshf_B(D_-))) \nonumber \\
& = \zahl [H_1,\ldots , H_{2d}, \xi] / (H_i^2 , (\xi - D_+) (\xi - D_-)). \label{eq:strAX}
}
Here we also remark that $\xi = \Pi^*\oshf_{\crat_d} (1)$.

\begin{lemma} \label{lem:IntersecNum} Let $x = (P_1, \ldots , P_{2d}, [f_+ : f_-] ) \in X$ have a three-dimensional orbit. Thus its orbit cycle is
\[ \ol{G\cdot x}=(\#\stab_G(x))\ol{G\cdot x}_{\mathrm{red}}. \]
For any $i \in [1,2d]$, put $c_i := c_{P_i}(x)$.

Then we have
\eqnsl{
H_i H_j H_k |_{\ol{G\cdot x}} &= \begin{cases}
    1 & (\# \{ P_i,P_j,P_k \} = 3), \\ 
    0 & (\# \{ P_i,P_j,P_k \} < 3).
\end{cases} \label{eq:hihjhk} \\
\xi H_i H_j |_{\ol{G\cdot x}} &= \begin{cases}
    0 & (P_i = P_j), \\
    (d+1) - c_i - c_j & (P_i \neq P_j). \\
\end{cases} \label{eq:xihihj}
}
\end{lemma}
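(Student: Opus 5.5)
The plan is to compute both numbers on the orbit map itself rather than on the closure. Let $\mu_x : G \to X$, $g\mapsto g\cdot x$. Its image closure is $\ol{G\cdot x}_{\mathrm{red}}$, and it is generically finite of degree $\#\stab_G(x)$. So for any cycle $W$ meeting $\ol{G\cdot x}_{\mathrm{red}}$ properly, and only inside the open orbit $G\cdot x$, the number $W\cdot \ol{G\cdot x}$ is the number of $g\in G$ with $g\cdot x\in W$, counted with multiplicity. This is exactly why the stabilizer-weighted orbit cycle is the right object: both sides count group elements. Each time, I choose representatives of $H_i$ (the pullback of a point $R_i\in\pone$ under the $i$-th projection) and of $\xi=\Pi^*\oshf(1)$ in general position. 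I then check that the intersection with the boundary $\ol{G\cdot x}_{\mathrm{red}}\setminus G\cdot x$ is empty, or at least of the expected dimension, so that it contributes nothing. For this I use \Cref{cor:boundaryorbit}: every boundary point lies over $B_1$ or over some $B_A$, and such configurations take at most two distinct values in $\pone$.

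\emph{Step 1: \cref{eq:hihjhk}.} Represent $H_iH_jH_k$ by $\{P'_i=R_i,\ P'_j=R_j,\ P'_k=R_k\}$ with $R_i,R_j,R_k$ distinct. No boundary point can meet it, since boundary base points take at most two values. In the open orbit, the condition is $g(P_i,P_j,P_k)=(R_i,R_j,R_k)$. If $\#\{P_i,P_j,P_k\}=3$, there is exactly one such $g\in\pglt$ by sharp 3-transitivity, and the intersection is transversal because the map $\pglt\to(\pone)^3$, $g\mapsto(gP_i,gP_j,gP_k)$, is an isomorphism onto the open set of distinct triples. This gives $1$. If two of $P_i,P_j,P_k$ coincide, then $g$ preserves that coincidence while the $R$'s are distinct, so the intersection is empty and the number is $0$.

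\emph{Step 2: \cref{eq:xihihj}.} If $P_i=P_j$, take $R_i\neq R_j$; exactly as in Step 1 the intersection is empty, so the number is $0$. If $P_i\neq P_j$, translate $x$ so that $P_i=0$ and $P_j=\infty$, and take $R_i=0$, $R_j=\infty$. The locus $\{g\cdot x: gP_i=0,\ gP_j=\infty\}$ is the orbit of $x$ under the diagonal torus $T$, and its closure is a curve $C$. The rest of $H_i\cap H_j\cap\ol{G\cdot x}_{\mathrm{red}}$ lies in boundary orbits of dimension at most two. Each of these meets the two divisor conditions in finitely many points, so none of them is a $1$-dimensional component. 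Therefore $\xi H_iH_j|_{\ol{G\cdot x}}$ equals $\deg \Pi_*$ of the parametrized curve $T\to\crat_d$, $t\mapsto t\cdot[f_+:f_-]$, where the stabilizer weighting is again absorbed by counting the degree of the parametrization. This degree is the length of the weight interval of the nonzero coefficients.

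In $V_{d+1}$ the monomial $x_0^{d+1-n}x_1^n$ has $\operatorname{diag}(t,t^{-1})$-weight $d+1-2n$. In $V_{d-1}$ the monomial $x_0^{d-1-n'}x_1^{n'}$ has weight $d-1-2n'=d+1-2(n'+1)$. Hence the largest weight occurring is $d+1-2\min(\ord_0f_+,\ord_0f_-+1)=d+1-2c_i$. Symmetrically, the smallest is $-(d+1-2c_j)$. The weight spread is therefore $2(d+1-c_i-c_j)$. Since $\operatorname{diag}(t,t^{-1})$ and $\operatorname{diag}(-t,-t^{-1})$ coincide in $\pglt$, the effective parameter is $t^2$, and the degree is $(d+1)-c_i-c_j$.

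The main obstacle is the bookkeeping in Step 2. Two things must be checked with care. First, the boundary contributions really do vanish: the endpoints $t\to0,\infty$ of $C$ are boundary points, but a general hyperplane representing $\xi$ avoids them. Second, the degree-$2$ cover $t\mapsto t^2$ must combine correctly with the stabilizer of $x$ inside $T$, so that the final count is exactly the weighted degree $(d+1)-c_i-c_j$ and not a multiple of it. I would handle the second point by computing the degree of the parametrization $T/\{\pm1\}\to C$ and comparing it directly with $\#\stab_G(x)$.
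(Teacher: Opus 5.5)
Your proposal is correct and follows essentially the same route as the paper: sharp $3$-transitivity gives $H_iH_jH_k$, and for $\xi H_iH_j$ you normalize $P_i=0$, $P_j=\infty$, reduce to the diagonal-torus family $\{t\cdot x\}$, and read the degree $(d+1)-c_i-c_j$ off the spread of the extreme nonzero torus weights (your $\slt$-weights with $t\mapsto t^2$ are the paper's $\pglt$-weights $0,\ldots,d+1$ on $V_{d+1}$ and $1,\ldots,d$ on $V_{d-1}$). The stabilizer comparison you list as an obstacle is automatic, since $\stab_G(x)$ fixes every marked point and hence lies in the torus fixing $0$ and $\infty$; in any case your count of group elements $g$ with $g\cdot x\in W$ already accounts for it.
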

\begin{proof}
    The former equation follows from the fact that $\pglt$ is 3-transitive on $\pone$.

    These intersections are computed on the orbit parameterization, whose pushforward is the orbit cycle $\ol{G\cdot x}$. For the latter equation, again by transitivity, we can assume that $H_i = 0$ and $H_j = \infty$. We have $\ol{G \cdot x}_{\mathrm{red}} \cap  H_i \cap H_j = \emptyset$ if $P_i = P_j$. Otherwise the pullback of $H_i \cap H_j \cap \ol{G\cdot x}$ to the orbit parameterization is the one-parameter family
    \[ \{t\cdot x\mid t\in\gmult\}. \]
    Here $\gmult$ acts on $x$ as the diagonal matrices in $\pglt$, that is, 
    \eqns{& t \cdot [ [ a_{+,0} , a_{+,1}, \ldots, a_{+,d+1}], [a_{-,1}, a_{-,2}, \ldots , a_{-,d} ] ] \\ & = [ [ a_{+,0}, ta_{+,1}, \ldots , t^{d+1}a_{+,d+1} ] , [ ta_{-,1}, t^2a_{-,2}, \ldots , t^da_{-,d} ] ].
    }
    An arbitrary member $\xi \in H^0(\oshf_{\ol{\mrat}_d}(1))$ is given by 
    \eqns{ & \xi ( [f_+ , f_-] ) = \sum_i \xi_{+,i} a_{+,i} + \sum_i \xi_{-,i} a_{-,i} \text{ for some } \xi_{\pm,i} \in k.
    }
    For a generic $\xi$, the hyperplane $(\xi  = 0)$ does not intersect at $t = 0,\infty$. Therefore, $ (\xi = 0 ) \cap \ol{\gmult \cdot x}$ coincides with an equation of $t$ of degree $(d+1) - c_i - c_j$. This shows the assertion.
\end{proof}
The set $\{ H_iH_jH_k \}_{i<j<k} \cup \{ H_iH_j\xi \}_{i<j}$ is a basis of $A^3(X)$. Under the Poincar\'{e} intersection form
\[ A^3 (X) \otimes A^{2d-2}(X) \to A^{2d+1}(X) = \zahl \cdot [\text{pt}] \xto{[\fct{pt}]^\vee : [\fct{pt}] \mapsto 1}\zahl ,\ (Z,Z') \mapsto [\fct{pt}]^\vee ZZ', \]
we denote its dual basis in $A^{2d-2}(X)$ by $\{ (H_iH_jH_k)^\vee \} \cup \{ (H_iH_j\xi )^\vee \}$.
\begin{corollary} \label{cor:gendegOrbit}
    The cycle class $\delta (G,X)\in A^{2d-2}(X)$ of a generic orbit cycle for $X = \pmcrat_d$ is given by
    \eqnl{ \delta (G,X) = \sum_{1 \leq i < j <k \leq 2d}( H_iH_jH_k )^{\vee} + \sum_{1 \leq i < j \leq 2d}(d+1)( H_iH_j\xi )^{\vee} - \sum_{\substack{1 \leq i \leq d+1 \\ 1 \leq j \leq 2d \\ i \neq j}}(H_iH_j\xi)^\vee.  \label{eq:gendegX}}
    The generic orbit-cycle locus of $X$ is given by
    \eqnslg{  U_X & :=  \{ x \in X \mid \delta(G,x) = \delta (G,X) \} \\
    & = \{ (P_1,\ldots , P_{2d} , [f_+ : f_-]) \in X \mid  P_i \neq P_j\ (\forall i \neq j), \ f_+ \neq 0  \} . }{\label{eq:gendeglocU}}
\end{corollary}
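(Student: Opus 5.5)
By \cref{eq:strAX}, $\{H_iH_jH_k\}_{i<j<k}\cup\{H_iH_j\xi\}_{i<j}$ is a basis of $A^3(X)$ and the intersection pairing with $A^{2d-2}(X)$ is perfect. So for any $x$ with a three-dimensional orbit the plan is to expand
\[ \delta(G,x)=\sum_{i<j<k}\bigl(H_iH_jH_k\cdot\ol{G\cdot x}\bigr)(H_iH_jH_k)^\vee+\sum_{i<j}\bigl(\xi H_iH_j\cdot\ol{G\cdot x}\bigr)(H_iH_j\xi)^\vee , \]
and to read off all coefficients from \Cref{lem:IntersecNum}. The orbit cycle already carries the stabilizer multiplicity, and the lemma computes intersections with that cycle, so no separate bookkeeping of stabilizers is needed. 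Both assertions then reduce to evaluating the numbers $c_i=c_{P_i}(x)$ from \cref{eq:abcdef}.

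For the generic class, take $x$ with pairwise distinct $P_1,\dots,P_{2d}$ and $f_+f_-\neq0$. Such points form a nonempty open subset of $X$, so their class is $\delta(G,X)$. Each $b=\pi_B(x)$ lies in $B_3$ because $2d\ge4$, so the orbit is three-dimensional. By \cref{eq:hihjhk}, every $H_iH_jH_k$ coefficient equals $1$. Since the $P_i$ are distinct:
\begin{itemize}
\item for $i\le d+1$ we have $a=1$, $b=0$, so $c_i=\min(1,1)=1$;
\item for $i\ge d+2$ we have $a=0$, $b=1$, so $c_i=0$.
\end{itemize}
By \cref{eq:xihihj} the coefficient of $(H_iH_j\xi)^\vee$ is therefore $(d+1)-[i\le d+1]-[j\le d+1]$. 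The last sum in \cref{eq:gendegX} runs over ordered pairs $(i,j)$ with $i\in[d+1]$, $j\neq i$, and each unordered dual counts once for each of its indices lying in $[d+1]$. This matches exactly, giving \cref{eq:gendegX}.

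For the locus \cref{eq:gendeglocU}, since $\delta(G,x)$ is only defined for finite stabilizer, I restrict to $x$ with a three-dimensional orbit and compare coefficients.
\begin{itemize}
\item If $P_i=P_j$ for some $i\neq j$, pick $k$ with $P_k\ne P_i$, or any $k$ if there is none. Then $H_iH_jH_k\cdot\ol{G\cdot x}=0\neq1$, so $x\notin U_X$.
\item If the $P_i$ are distinct and $f_+=0$, then $c_i=b+1$ equals $1$ for $i\le d+1$ and $2$ for $i\ge d+2$. For $i,j\ge d+2$ (possible as $d\ge2$... for $d=2$ there is only one such index, so I would instead use the pair $i\le d+1<j$, which gives $d-2\neq d$). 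The $\xi H_iH_j$ coefficient then differs from the generic value, so $x\notin U_X$.
\item If the $P_i$ are distinct and $f_+\neq0$: when $f_-\ne0$ the $c_i$ are as computed above; when $f_-=0$, $c_i=a_{P_i}$ is again $1$ or $0$ according as $i\le d+1$ or not. In both cases all coefficients agree with the generic ones, so $x\in U_X$.
\end{itemize}

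The main point needing care is this case analysis. In particular, the degenerate fibre $f_-=0$ has the generic class while $f_+=0$ does not; this asymmetry is what explains the condition "$f_+\neq0$" alone in \cref{eq:gendeglocU}. Equally delicate is checking that coincident markings never accidentally reproduce the generic numbers, which the $H_iH_jH_k$ coefficients rule out.
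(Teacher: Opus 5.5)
Your proposal is correct and takes the same route as the paper, which just says both assertions follow by direct computation from \Cref{lem:IntersecNum}; you carry out that computation, reading off the dual-basis coefficients, using the $H_iH_jH_k$ coefficients to exclude coincident markings, and using the $\xi H_iH_j$ coefficients to exclude $f_+=0$. As a small cleanup, the mixed pair $i\le d+1<j$ (coefficient $d-2$ versus $d$) already works for every $d\ge2$, so the aside about pairs $i,j\ge d+2$ can be dropped.
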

\begin{proof} The assertion follows from direct computation using \Cref{lem:IntersecNum}.
\end{proof}

\section{Stratification of Chow Quotients}
\label{sec:stratification}

\subsection{Review of $\ol{M}_{0,n}$}
In this subsection, we briefly recall the Chow-quotient construction of the moduli space of stable rational curves 
\[ \ol{M}_{0,n} := (\pone )^n \sslash_{Ch} \pglt   \]
by \cite{kapranov1993ChowQuotGrassI}.
The space $\ol{M}_{0,n}$ is a canonical compactification of the space of the point configurations $M_{0,n} = \conf_n\pone \slash \pglt$, where $\conf_n\pone := (\pone )^n \setminus \bigcup_{1\leq i<j\leq n} \Delta_{ij}$.
An $n$-pointed stable rational curve is defined as a connected curve of arithmetic genus $0$ with $n$ nonsingular marked points, where its singularities are locally isomorphic to $xy = 0$ and whose automorphism group preserving marked points is trivial. By taking the dual graph, it is equivalent to the datum $(\Gamma , \{ C_v \}_{v \in V(\Gamma )} )$: where 
\begin{itemize}
    \item $\Gamma$ is a tree graph with leaf-marking $p : \{ 1,\ldots , n \} \to \Gamma$ with no vertex of valence 2 and 
    \item $C : \{ v : \val (v)  \geq 3 \} \ni v \mapsto C_v \in M_{0,\val (v)}$ is the data of point configurations.
\end{itemize}
On the Chow-quotient construction, the point configuration $C_v$ corresponds to a 3-dimensional orbit cycle of $( \pone )^n$ by the vertex decomposition of the leaf marking of the tree graph of the dual, that is, 
\eqns{ & M_{0,\val (v)} \ni C_v = [G \cdot (P_{e})_{e \in E_v}] \leftrightarrow Z_v = \ol{G \cdot (P_{p_v(1)}, \ldots , P_{p_v(n)})} \subset (\pone )^n, \\
& \text{ where } E_v := \{ e \in E(\Gamma ) \mid v \in e \}
}
and $p_v$ is the vertex contraction. Here the tree graph with leaf-marking $( \Gamma , p)$ determines the decomposition of the cycle class. In the Chow ring
\[ A^*((\pone)^n)\simeq\zahl[H_1,\ldots,H_n]/(H_i^2), \]
the class of the three-dimensional orbit cycle is
\[ [Z_v]
=\sum_{\substack{i<j<k: \\ \# p_v(\{i,j,k\})=3}}
(H_iH_jH_k)^\vee
=\sum_{\substack{i<j<k: \\ \# p_v(\{i,j,k\})=3}}
\prod_{\ell\notin\{i,j,k\}}H_\ell
\quad\text{in }A^{n-3}((\pone)^n). \]
Thus the nonempty strata of the decomposition stratification of $\ol{M}_{0,n}$ are indexed by the trees with leaf-marking by $[n]$ and no vertex of valence two.

As a point $C = [(\Gamma , \{ C_v \}) ] \in \ol{M}_{0,n}$, there is another way to obtain the dual graph. Choose a finite valued extension $K$ of $k((t))$. Since $M_{0,n}$ is dense open in $\ol{M}_{0,n}$, by \Cref{prop:valucriterion} and after enlarging $K$ if necessary, we can take $[C_t]\in M_{0,n}(K)$ with $\spe[C_t]=C$ and a representative $C_t=(P_{1,t},\ldots,P_{n,t})\in\pone(K)^n$. Under the natural embedding of classical points
\[ \iota:\pone(\ol K)\inc\ponean_K, \]
let $T$ be the smallest sub $\real$-tree of $\proj^{1,an}_K$ including $\iota (\{ P_{i,t} \} )$. By forgetting the metric of $T$, we obtain a leaf-marked tree $( \Gamma (T) , p )$. Every branching vertex $\zeta$ of $T$ of valence at least three is of type II, and the reduction map 
\[ \fct{red}_\zeta : \ponean \setminus \{ \zeta \} \to \cc (\ponean \setminus \{ \zeta \} ) \simeq \pone_{k}\]
at each such vertex determines the specialized configuration $\zeta \mapsto C_\zeta \in M_{0,\val(\zeta)}$.

\begin{example} \label{ex:ConfigBerkovich}
For the field $K = k((t))$, we take a point 
\[ x = (P_1, P_2,P_3,P_4,P_5,P_6 ) = (\infty , t^{-2},-1,0,1,1+t^3 ) \in \conf_6\pone_K\]
and let $[C_t]$ be the image of $x$ in $M_{0,6}(K)$. We compute the point $\spe [C_t] \in \ol{M}_{0,6}(k)$. The smallest subtree of $\ponean_K$ which includes the half lines of the limit points $P_i$ has the type II points $O(t^{-2}), O(1), 1+O(t^3)$ as its branch points, where we write $f + O(t^N)$ for the type II point corresponding to the non-Archimedean disc $D(f, |t^N| )$.
\usetikzlibrary{calc,positioning}

\begin{tikzpicture}[>=latex,
    every node/.style={font=\small}]

\coordinate (A) at (0,0);
\coordinate (B) at (2,0);
\coordinate (C) at (3,1);
\coordinate (D) at (3,-1);
\coordinate (E) at (1,2);
\coordinate (F) at (4,-1);
\coordinate (G) at (5,-2);

\draw (-2,0)--(A);
\node[left] at (-2,0) {$\infty$};

\draw (A)--(B);

\draw (A)--(E);
\draw (B)--(C);
\draw (B)--(D);
\draw (F)--(G);

\draw (E)--++(5,0);
\draw (C)--++(3,0);
\draw (B)--++(4,0);
\draw (D)--++(3,0);
\draw (G)--++(1,0);

\node[below] at (A) {$O(t^{-2})$};
\node[below] at ($(B)+(-0.2,0)$) {$O(1)$};
\node[below] at ($(D)+(0.5,0)$) {$1+O(t^{3})$};

\node[right] at ($(E)+(5,0)$) {$t^{-2}$};
\node[right] at ($(C)+(3,0)$) {$-1$};
\node[right] at ($(B)+(4,0)$) {$0$};
\node[right] at ($(D)+(3,0)$) {$1$};
\node[right] at ($(G)+(1,0)$) {$1+t^3$};
\end{tikzpicture}

Each branch point corresponds to a sphere in the tree-of-spheres formulation and (relative) coordinates of each point on the sphere are determined by the value of the reduction.

\usetikzlibrary{shapes.misc}
\begin{tikzpicture}[
    scale=1,
    pt/.style={
        cross out,
        draw,
        minimum size=5pt,
        inner sep=0pt,
        line width=.5pt
    }
]

\def\r{1.6}

\draw (0,0) circle (\r);
\draw (2*\r,0) circle (\r);
\draw (4*\r,0) circle (\r);

\node[pt] (Linf) at ({\r*cos(170)},{\r*sin(170)}) {};
\node[left=2mm] at (Linf) {$\infty$};

\node[pt] (L1) at ({\r*cos(95)},{\r*sin(95)}) {};
\node[above] at (L1) {$1$};

\node[left] at (\r,0) {$0$};

\node[right] at (2*\r-\r,0) {$\infty$};

\node[pt] (M1) at ({2*\r+\r*cos(55)},{\r*sin(55)}) {};
\node[above] at (M1) {$-1$};

\node[pt] (M2) at ({2*\r+\r*cos(-50)},{\r*sin(-50)}) {};
\node[below] at (M2) {$0$};

\node[left] at (3*\r,0) {$1$};

\node[right] at (4*\r-\r,0) {$\infty$};

\node[pt] (R1) at ({4*\r+\r*cos(55)},{\r*sin(55)}) {};
\node[right] at (R1) {$0$};

\node[pt] (R2) at ({4*\r+\r*cos(-30)},{\r*sin(-30)}) {};
\node[right] at (R2) {$1$};

\end{tikzpicture}

Sometimes it is convenient to write each sphere $\pone_k$ as a segment.

\begin{tikzpicture}[
  x=1cm,y=1cm,
  main line/.style={line width=.85pt},
  tick/.style={line width=.85pt},
  every node/.style={font=\small}
]

\draw[main line]
  (-3.2,3.2) -- coordinate[pos=.28] (L1)
                 coordinate[pos=.53] (L2) (-1.4,-.8);

\draw[main line]
  (-2.85,0) -- coordinate[pos=.34] (H3)
                coordinate[pos=.59] (H4) (2.85,0);

\draw[main line]
  (1.4,-.8) -- coordinate[pos=.47] (R5)
                coordinate[pos=.72] (R6) (3.2,3.2);

\newcommand{\xmark}[1]{%
  \draw[tick] ($(#1)+(-.10,-.10)$) -- ($(#1)+(.10,.10)$);
  \draw[tick] ($(#1)+(-.10,.10)$)  -- ($(#1)+(.10,-.10)$);
}
\xmark{L1}
\xmark{L2}
\xmark{H3}
\xmark{H4}
\xmark{R5}
\xmark{R6}

\node[left=7pt]  at (L1) {$1$};
\node[left=7pt]  at (L2) {$2$};
\node[below=7pt] at (H3) {$3$};
\node[below=7pt] at (H4) {$4$};
\node[right=7pt] at (R5) {$5$};
\node[right=7pt] at (R6) {$6$};

\node[above=5pt] at (-1.76,0) {$\infty$};
\node[above=7pt] at (H3)     {$-1$};
\node[above=7pt] at (H4)     {$0$};
\node[above=5pt] at (1.76,0) {$1$};

\end{tikzpicture}

\end{example}

\subsection{Strata Trees and the Statement of Structure Theorem}

We will see an analogue of this method on $\pmcrat_d$. For $\pmcrat_d$, the vertices of the dual graphs are decorated by sign symbols we name contour.

\begin{definition}
    Let $(\Gamma,p)$ be a leaf-marked tree with $p:I\xto{\sim}L(\Gamma)$. A \emph{leaf-weight} is a map $c:I\to\{+1,-1\}$; thus the weight is attached to a label, rather than evaluated on an arbitrary vertex. We call $(\Gamma,p,c)$ a \emph{weighted leaf-marked tree}.

    For an edge $e=\{v,v'\}$, let $p_e:I\to\{v,v'\}$ be the edge-contracted marking and put
    \[ W_e(v):=\sum_{i\in p_e^{-1}(v)}c(i),\qquad W_e(v'):=\sum_{i\in p_e^{-1}(v')}c(i). \]
    We write $v\sim_e v'$ if $W_e(v)=W_e(v')$, and $v<_e v'$ if $W_e(v)<W_e(v')$. This is an edge-wise relation; the notation does not identify the two vertices of the edge.

    A \emph{contour} is a map $\sigma:V(\Gamma)\to\{-,0,+\}$ for which there are positive edge lengths $\ell$ and a continuous piecewise-linear function
    $h:T(\Gamma,\ell)\to\real$ such that $\sigma(v)=\sgn h(v)$ and, on the coordinate $x$ oriented from $v$ to $v'$,
    \eqnl{ \partial_x h=\frac{W_e(v')-W_e(v)}{2}. \label{eq:weightorder} }
    We require the boundary of every connected component of $h^{-1}(0)$ to consist of vertices. Consequently, if $v\sim_e v'$, then $\sigma(v)=\sigma(v')$; if $v<_e v'$, then
    $(\sigma(v),\sigma(v'))\in\{(-,-),(-,0),(0,+),(+,+)\}$.
\end{definition}
\begin{remark}
    Unless $\sigma^{-1}(0)=\emptyset$, the contour on a weighted tree is determined by the set $\sigma^{-1}(0)$. If $\sigma^{-1}(0)=\emptyset$, the contour is constant, and the zero set alone does not distinguish $\sigma\equiv+$ from $\sigma\equiv-$. For a strata tree on $\pmcrat_d$ as defined below, condition~\textup{(b)} forces $\sigma\equiv-$ in this exceptional case.
\end{remark}

\begin{definition} \label{def:stratatree}
    \emph{A strata tree on }$\pmcrat_d$ is a tuple $( \Gamma ,\sigma )$ of 
    \begin{enumerate}
        \item \label[cond]{cond:leafwtmarktree} a weighted leaf-marked tree $T = (\Gamma , p,c)$ with a bijection $p:[2d]\xto{\sim}L(\Gamma)$ and
        \item \label[cond]{cond:contourtree} a contour $\sigma$ of $T$
    \end{enumerate}
    which satisfies the following conditions:
    \begin{enumerate}
        \item[(a)] $c(i) = \begin{cases} +1 & (i \leq d+1), \\ -1 & (i \geq d+2), \end{cases}$
        \item[(b)] $\sigma(p(i))=-$ for $i\geq d+2$, and
        \item[(c)] if $\val(v)=2$, then $\sigma(v)=0$ and its two adjacent vertices have signs $-$ and $+$, respectively.
    \end{enumerate}
\end{definition}

The strata tree records the decomposition of the degree of the Chow cycle into the degrees of its irreducible components. The tree loci refine the decomposition stratification, but distinct strata trees may give the same decomposition of cycle classes.

\begin{corollary} \label{cor:stratavariety}
    For a strata tree $(\Gamma,\sigma)$ on $\pmcrat_d$, let $(X_d)_{\Gamma,\sigma}$ be the locus of points whose Chow data have underlying strata tree $(\Gamma,\sigma)$. These tree strata give a refinement
    \[ X_d:=\pmcrat_d\chquo\pglt
    =\bigsqcup_{(\Gamma,\sigma)}(X_d)_{\Gamma,\sigma} \]
    of the decomposition stratification of $X_d$; distinct tree strata may be contained in the same decomposition stratum. Put $V_{\mathrm{st}}(\Gamma):=\{v\in V(\Gamma)\setminus L(\Gamma)\mid\val(v)\geq3\}$. Then
    \eqnl{ (X_d)_{\Gamma,\sigma}\simeq
    \gmult^{\alpha (\Gamma ,\sigma )} \times \prod_{v \in V_{\mathrm{st}}(\Gamma)}M_{0,\val (v)}, \label{eq:stratavariety} }
    where $\alpha (\Gamma , \sigma )$ is the number of connected components of the subgraph induced by
    $\{v\in V_{\mathrm{st}}(\Gamma)\mid\sigma(v)=0\}$.
\end{corollary}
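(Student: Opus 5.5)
We would derive this from the structure theorem (\Cref{thm:Ydatum}), which identifies points of $X_d$ bijectively with Chow data. The plan is to read each stratum $(X_d)_{\Gamma,\sigma}$ as the set of Chow data with fixed underlying strata tree, parametrize the remaining data explicitly, and check that the bijection is an isomorphism of varieties on each stratum. Since every Chow datum has a unique underlying strata tree, the strata are disjoint and cover $X_d$. The refinement statement also follows. By \Cref{lem:IntersecNum} and \Cref{cor:gendegOrbit}, the class of each component orbit cycle is a function of its vertex decomposition and contour data, so the tree determines the multiset $\mathcal D$ of classes. Distinct trees can give the same multiset, which is why the tree strata refine the decomposition strata only non-injectively.

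For the product description \cref{eq:stratavariety}, we would split the decorations of a Chow datum with tree $(\Gamma,\sigma)$ into two kinds.

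First, each stable vertex $v\in V_{\mathrm{st}}(\Gamma)$ carries a configuration of $\val(v)$ distinct points on a sphere, modulo $\pglt$. By the vertex-contracted marking $p_v$ this is exactly a point of $M_{0,\val(v)}$, as in the review of $\ol M_{0,n}$ via $(\pi_B)_*$.

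Second, the fibre coordinate $[f_+:f_-]$ on each component must be accounted for. On vertices with $\sigma(v)=\pm$, \Cref{prop:orbdecomp} and \Cref{cor:boundaryorbit} show that the component lies in $\{f_-=0\}$ or $\{f_+=0\}$. Once the base configuration is fixed, no continuous parameter remains there. On a vertex with $\sigma(v)=0$ we have $f_+f_-\neq0$, and the ratio $u/v\in\gmult$ is a genuine modulus.

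Along an edge joining two zero-contour vertices, we have $W_e(v)=W_e(v')$, so the Chow limit computation of \Cref{cor:boundaryorbit}\cref{item:ocjoint} forces the two orbits to share the joint two-dimensional orbit in $\pi_B^{-1}(B_A)$. This matching condition identifies their $\gmult$-parameters after the torus normalization. Hence each connected component of the zero-contour stable subgraph contributes exactly one $\gmult$, giving the exponent $\alpha(\Gamma,\sigma)$. Valence-two vertices carry no configuration modulus (their $M_{0,2}$ is a point), and condition (c) places them at zero-contour transitions between $-$ and $+$ regions. So they only mediate the gluing and contribute no further factor. We would also check that the parameters on these vertices are determined by the adjacent sign data.

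The map from the product to the stratum sends the tuple of data to the sum of the corresponding orbit cycles. It is a morphism into $\fct{Chow}(X)$ because of the addition map. Injectivity is given by the bijection with Chow data. To show that it is an isomorphism onto its image, we apply \Cref{lem:ChowFactorization}: distinct components are distinct orbit closures with no common irreducible component, and the decomposition is unique. This makes addition a local closed immersion. The summands are then recovered regularly. The $M_{0,\val(v)}$-coordinates come from $(\pi_B)_*$ composed with the known isomorphisms for $\ol M_{0,2d}$. The $\gmult$-coordinates come from a degree-one Chow slice (\Cref{lem:degreeoneChowSlice}): we cut the zero-contour component by the divisors $H_i,H_j$ through two separated markings and read off the ratio $u/v$.

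The main obstacle is the claim that exactly one $\gmult$ survives per connected zero component. We must show both that the gluing across a zero–zero edge imposes precisely one multiplicative relation, and that across zero–nonzero edges nothing is imposed. We would attempt this through the PTS principle (\Cref{prop:PTS}). Take a valued lift in which the piecewise-linear function $h$ of the contour is realized as the valuation of $f_-/f_+$ along the Berkovich tree. The slope formula \cref{eq:weightorder} then says $h$ is constant on a zero component. Consequently the leading coefficient ratio is a single residue in $k^\times$ shared by the whole component, and independent ratios are realizable on different components.
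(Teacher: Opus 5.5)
Your proposal follows essentially the same route as the paper. It identifies the stratum with Chow data on a fixed strata tree via \Cref{thm:Ydatum} and embeds the parameter space into $\fct{Chow}(X,\delta(G,X))$ through the addition map, recovering summands by \Cref{lem:ChowFactorization} and degree-one slices. It then uses that only zero--zero edges impose an (invertible) relation between aspect constants, so the forest structure leaves one $\gmult$ per connected zero component. Recovering the $M_{0,\val(v)}$-coordinates through $(\pi_B)_*$ instead of the paper's three-point slice is a harmless variant.

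Two small corrections. First, \Cref{lem:degreeoneChowSlice} applied to a three-dimensional cycle needs three divisors, e.g.\ $H_i,H_j,H_k$ through markings in three distinct branches at $v$ as in the paper; $H_i,H_j$ alone cut out a one-dimensional torus-orbit curve rather than a point. Second, your PTS realizability argument is unnecessary: the surjectivity part of \Cref{thm:Ydatum}, via \Cref{cor:ShiftConst}, already realizes independent aspect constants on different zero components.
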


\begin{definition} \label{def:Chowdatum}
    A \emph{Chow datum} on $\pmcrat_d$ is a tuple $(\Gamma , \sigma , Z )$ of a strata tree $(\Gamma , \sigma )$ of $\pmcrat_d$ and $Z: V(\Gamma ) \setminus L(\Gamma ) \to Z_3(\pmcrat_d ) $ such that
    \begin{enumerate}
        \item \label[cond]{cond:edgeorbit} for every edge $e=\{v,v'\}\in E(\Gamma)$ with $v,v'\notin L(\Gamma)$, put $A_e:=\fib{p_e}$. There exists a two-dimensional $G$-orbit $O_e\subset\pmcrat_d$ such that
        \[ O_e\subset\supp Z(v)\cap\supp Z(v'),
        \qquad \pi_B(O_e)=B_{A_e}; \]
        \item \label[cond]{cond:eachcycle} for each $v \in V(\Gamma ) \setminus L(\Gamma ) $, the cycle $Z(v) \in Z_3(\pmcrat_d)$ is the orbit cycle
        \[ Z(v)=\ol{G\cdot x_v}=(\#\stab_G(x_v))\ol{G\cdot x_v}_{\mathrm{red}} \]
        for some $x_v = (b,[f_+ : f_-]) \in \pmcrat_d$ with finite stabilizer, such that
        \begin{enumerate}
            \item $\fib{b} = \fib{p_v}$ and 
            \item $\begin{cases}
            f_+ = 0 & (\sigma (v ) = +) \\
            f_+f_- \neq 0 & (\sigma (v) = 0)\\
            f_- = 0 & (\sigma (v) = -)
        \end{cases}$.
        \end{enumerate}
    \end{enumerate}
\end{definition}

A main purpose of this section is to show the following correspondence:

\begin{theorem} \label{thm:Ydatum}
    There is a bijection
    \[ \fct{ChowData}:X_d:=\pmcrat_d\sslash_{Ch}G
    \xto{\sim}\{\text{Chow data on }\pmcrat_d\} \]
    such that 
    \eqnl{ y=\sum_{v\in V(\Gamma)\setminus L(\Gamma)}Z(v) \text{ for }\fct{ChowData}(y)=(\Gamma,\sigma,Z). \label{eq:ChowData} }
\end{theorem}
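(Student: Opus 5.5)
Given $y\in X_d$, I will read off a Chow datum from a valued degeneration, and then show that the construction is well defined and can be inverted.

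\emph{Step 1 (lifting).} Using \Cref{prop:valucriterion} with the dense open generic orbit-cycle locus $U_X$ of \Cref{cor:gendegOrbit}, I choose $x_t=(P_{1,t},\dots,P_{2d,t},[f_{+,t}:f_{-,t}])\in U_X(K)$ whose orbit cycles specialize to $y$. The base points $P_{i,t}$ span a finite subtree $T\subset\ponean_K$, exactly as in the $\ol M_{0,2d}$ review. This gives a leaf-marked tree $(\Gamma(T),p)$ together with the weights $c$ of condition (a). It is the tree of $(\pi_B)_*y\in\ol M_{0,2d}$, refined by possibly adding some valence-two vertices.

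\emph{Step 2 (contour).} For each type II point $\zeta$ of $T$, I conjugate by $\gamma_\zeta\in G(K)$ so that $\zeta$ becomes the Gauss point, and then reduce. For this I define $h(\zeta)$ as the difference of the normalized Gauss norms of $f_+$ and of $f_-$ at $\zeta$, with the normalization chosen so that it is conjugation-invariant; concretely, $h(\zeta)=v_\zeta(f_-)-v_\zeta(f_+)$ up to a shift. Along an edge, the Gauss valuation of a product of linear forms changes with slope equal to half the signed count of roots on either side. This is the computation behind \Cref{prop:orbdecomp}: there the weight $a_\infty-a_0$ versus $b_\infty-b_0$ appears, and that is exactly where \cref{eq:weightorder} comes from. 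Consequently $\sigma(\zeta)=\sgn h(\zeta)$ says whether the reduction at $\zeta$ has $f_+=0$, has $f_-=0$, or has both nonzero. The zeros of $h$ at non-branch points give the valence-two vertices of (c).

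\emph{Step 3 (components and the equality \cref{eq:ChowData}).} For each non-leaf vertex $v$, the reduction $x_v:=\spe(\gamma_v x_t)$ has base $b$ with $\fib{b}=\fib{p_v}$ and the sign pattern prescribed by $\sigma(v)$. I then claim it has finite stabilizer: when $\val(v)\ge3$ this holds because $b$ has a three-dimensional orbit, and when $\val(v)=2$ it holds because $\sigma(v)=0$ together with \Cref{prop:orbdecomp}. By \Cref{prop:stabilizermultiplicity}, $\ol{G\cdot x_v}\le y$. The two-dimensional orbits $O_e$ of condition (1) come from \Cref{cor:boundaryorbit}(2), applied at both ends of $e$. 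To show $y=\sum_v Z(v)$, I compare classes in $A^{2d-2}(X)$. Using \Cref{lem:IntersecNum}, I will show that
\[ \sum_v \delta(G,x_v)=\delta(G,X) \]
by pairing against $H_iH_jH_k$, which reduces to the $\ol M_{0,n}$ count, and against $\xi H_iH_j$. In the second pairing, the terms $(d+1)-c_i-c_j$ should telescope along the path from $p(i)$ to $p(j)$ via the slopes of $h$. Since the difference of effective cycles of equal class is effective and of degree zero, equality follows. Conversely, \Cref{prop:PTS} shows that every component of $y$ arises at some vertex, so the datum is determined by $y$ alone, independently of the lift $x_t$. This gives a well-defined map $\fct{ChowData}$, and \cref{eq:ChowData} makes it injective.

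\emph{Step 4 (surjectivity), the main obstacle.} Given a Chow datum, I must produce $x_t$ realizing it. My plan is to choose the edge lengths $\ell$ and a piecewise-linear $h$ as in the contour definition. I then build $x_t$ by gluing: on each vertex place $x_v$ rescaled by $t^{\ell}$ along the tree, as in the construction of $\ol M_{0,n}$ from a tree of spheres, and use the common orbits $O_e$ to match the leading coefficients of $f_\pm$ across each edge. The delicate point is showing that the scalars $[u:v]$ at the vertices can be matched coherently. I expect these compatibility constraints to be solvable exactly up to the torus $\gmult^{\alpha(\Gamma,\sigma)}$ appearing in \Cref{cor:stratavariety}, one factor per zero-component. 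I would first try the case of a single zero-component and then induct on edges. Once such an $x_t$ exists, Steps 1–3 applied to it return the given datum, which proves surjectivity.
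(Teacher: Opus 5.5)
\textbf{Steps 1--3.} These match the paper's construction of the injection (\Cref{prop:YdatumInj} together with the class computation of \Cref{lem:Chowdatumclass}). Three small points need fixing:
\begin{itemize}
\item To add up the inequalities $\ol{G\cdot x_v}\le y$ into $\sum_v Z(v)\le y$, you need the vertex orbit closures to be pairwise distinct. They are, because their partitions $\fib{p_v}$ differ.
\item That both ends of $e$ contain the same $O_e$ does not follow from \Cref{cor:boundaryorbit} applied at each end. This is a real condition when $\sigma(v)=\sigma(v')=0$, and it comes from specializing at an interior point of the segment.
\item $h$ must be exactly $\ord\fct{diff}=v(f_+)-v(f_-)$, with no shift and with this sign. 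Otherwise it does not match \cref{eq:weightorder} or the sign rule of \Cref{def:Chowdatum}.
\end{itemize}

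\textbf{The gap: Step 4.} This is where surjectivity actually lives, and you only state an expectation there. Once the marked points are fixed and $f_\pm=c_\pm\prod[P_i,z]$, the only free scalar is $c_+/c_-$. Its valuation fixes the additive constant of $h$, and its leading coefficient fixes the aspect constant on one zero-sign component. The aspect constants on every other zero-sign component are then forced by the configuration.

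The orbits $O_e$ cannot supply the missing freedom. They only transport an aspect constant across edges with $\sigma(v)=\sigma(v')=0$: there $d(A_e)=0$, so the boundary orbit records the constant. They are consistency conditions and create none of the remaining independent parameters among the $\alpha(\Gamma,\sigma)$ that the datum prescribes. ``Induct on edges'' therefore needs a local move that changes the aspect constant on one side of an edge and nothing else, and you have not produced one.

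\textbf{The missing move.} The paper supplies this with the branch twist (\Cref{lem:ShiftPoint}, \Cref{lem:ShiftConstMult}, \Cref{cor:ShiftConst}).
\begin{itemize}
\item Take $e=\{v,v'\}$ with $\sigma(v)\neq0=\sigma(v')$. Put $v$ at the Gauss point with $v'$ in direction $0$, and apply $z\mapsto\rho^{-2}z$ only to the marked points in that residue disc.
\item Every vertex configuration changes at most by an element of $G(k)$, and the reductions at vertices outside the branch are unchanged.
\item At every vertex beyond $e$, the relative constant of $[\ol f_+:\ol f_-]$ is multiplied, up to $G(k)$, by $\rho^{c_e}$ with $c_e=W_e(v)-W_e(v')$.
\item Since $\sigma(v)\neq\sigma(v')$, the slope of $h$ on $e$ is nonzero, so $c_e\neq0$.
\item Since $k$ is algebraically closed, $\rho^{c_e}$ takes every value in $k^\times$.
\end{itemize}
Working outward along the rooted tree then matches each zero-sign component without disturbing those already matched.

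In your gluing language, this freedom is hidden in the choice of scale at each vertex ($\lambda t^{\ell}$ rather than $t^{\ell}$). Step 4 only becomes a proof once you isolate that freedom and carry out this weight computation, in particular showing that the exponent $c_e$ is nonzero.
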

We first check that the sum of the rational-equivalence cycle classes in a Chow datum coincides with the generic orbit-cycle class.
\begin{lemma} \label{lem:Chowdatumclass}
    Let $(\Gamma,\sigma,Z)$ be a Chow datum on $X=\pmcrat_d$. Then
    \[ \sum_{v\in V(\Gamma)\setminus L(\Gamma)}[Z(v)]
    =\delta(G,X) \quad\text{in }A^{2d-2}(X). \]
\end{lemma}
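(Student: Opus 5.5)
The plan is to compare both sides through the perfect intersection pairing on $A^*(X)$. By \cref{eq:strAX}, the monomials $H_iH_jH_k$ ($i<j<k$) and $H_iH_j\xi$ ($i<j$) form a basis of $A^3(X)$. So it suffices to show that both sides of the claimed equality have the same intersection numbers with every such monomial. For the right-hand side these numbers are read off from \cref{eq:gendegX}:
\[ H_iH_jH_k\cdot\delta(G,X)=1,\qquad H_iH_j\xi\cdot\delta(G,X)=(d+1)-\epsilon_i-\epsilon_j, \]
where $\epsilon_i=1$ if $i\leq d+1$ and $\epsilon_i=0$ otherwise. For the left-hand side I will compute each $Z(v)\cdot H_iH_jH_k$ and $Z(v)\cdot H_iH_j\xi$ using \Cref{lem:IntersecNum}, where $Z(v)=\ol{G\cdot x_v}$ and $\fib{b}=\fib{p_v}$.

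For vertices of valence $\geq3$, \Cref{lem:IntersecNum} applies directly. For a valence-two vertex (where $\sigma(v)=0$ and $f_+f_-\neq0$), the base point has only two distinct coordinates. I will redo the $\gmult$-computation from the proof of \Cref{lem:IntersecNum} in this case: the orbit is still three-dimensional because the stabilizer is finite, and the same formula $\xi H_iH_j=(d+1)-c_i-c_j$ should hold when $P_i\neq P_j$.

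\textbf{Triple products.} By \cref{eq:hihjhk}, $Z(v)\cdot H_iH_jH_k=1$ exactly when $p_v$ sends $i,j,k$ into three distinct branches at $v$, and is $0$ otherwise; valence-two vertices always give $0$. In a tree, any three leaves have exactly one median vertex. Hence the sum over $v$ equals $1$, as required.

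\textbf{Mixed products.} By \cref{eq:xihihj}, $Z(v)\cdot H_iH_j\xi$ vanishes unless $v$ lies on the path $v_0,\ldots,v_m$ from $p(i)$ to $p(j)$. Here $v_0,\ldots,v_m$ denote the interior vertices of that path. For a vertex $v$ on the path, the contribution is $(d+1)-c_v(e^-)-c_v(e^+)$, where $e^\pm$ are the two edges of the path at $v$, and $c_v(e)$ is the quantity $c_Q(x_v)$ of \cref{eq:abcdef} for the branch $Q$ at $v$ containing $e$. The sum then regroups as
\[ m(d+1)\;-\;c_{v_0}(\text{leaf } i)\;-\;c_{v_{m-1}}(\text{leaf } j)\;-\sum_{\text{interior edges }e=\{v,v'\}\text{ of the path}}\bigl(c_v(e)+c_{v'}(e)\bigr). \]
Two claims then finish the proof:
\begin{enumerate}
\item[(i)] For every interior edge $e=\{v,v'\}$, $c_v(e)+c_{v'}(e)=d+1$.
\item[(ii)] For a leaf edge at $i$, $c_v(\text{leaf } i)=\epsilon_i$.
\end{enumerate}
Granting these, the sum equals $(d+1)-\epsilon_i-\epsilon_j$, which is what we need.

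Claim (ii) is a small check. A leaf branch has $(a,b)=(1,0)$ when $i\leq d+1$, and then $c=1$ in every case of \cref{eq:abcdef}. It has $(a,b)=(0,1)$ when $i\geq d+2$, and then $c=0$ unless $\sigma(v)=+$. That last case must be excluded: condition (b) of \Cref{def:stratatree}, together with the slope rule \cref{eq:weightorder} on the leaf edge, should rule out $\sigma(v)=+$ there.

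\textbf{Main obstacle: claim (i).} If the branch at $v$ toward $v'$ has counts $(a,b)$, then the opposite branch at $v'$ has counts $(d+1-a,\,d-1-b)$. I will check $c_v+c_{v'}=d+1$ case by case over the admissible sign pairs $(\sigma(v),\sigma(v'))$ allowed by the contour condition. I will use $W_e$ to compare $a-b-1$ with zero, and use condition~\ref{cond:edgeorbit} together with \Cref{cor:boundaryorbit}\,\ref{item:ocjoint} to see which vanishing pattern of $f_\pm$ the shared orbit $O_e$ forces on each side.

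For example, take $\sigma(v)=-$ and $\sigma(v')=-$. Then $c_v=a$ and $c_{v'}=d+1-a$, and the sum is $d+1$ directly. The mixed cases such as $(-,0)$ and $(0,+)$ need the inequality between $a$ and $b+1$ coming from $W_e(v)<W_e(v')$, so that the minima in \cref{eq:abcdef} evaluate to the right term. This bookkeeping, and in particular getting the orientation of \cref{eq:weightorder} to agree with the sign cases of \Cref{cor:boundaryorbit}, is where I expect the real work to be.
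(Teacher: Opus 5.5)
Your proposal is correct and is essentially the paper's proof: you compare against the basis $\{H_iH_jH_k\}\cup\{H_iH_j\xi\}$, use the unique median vertex for triple products, and telescope the mixed products along the path from $p(i)$ to $p(j)$ via $c_v(e)+c_{v'}(e)=d+1$ (this is the paper's identity $c_{i,v'}=d+1-c_{j,v}$, which it phrases as concatenation of the Newton intervals $I_v$) with endpoint values $\epsilon_i,\epsilon_j$. Two small remarks: claim (i) already follows from the four admissible contour sign pairs and the comparison of $a$ with $b+1$, so you need neither the edge-orbit condition nor \Cref{cor:boundaryorbit}; and your path indexing is off by one ($v_0,\ldots,v_m$ versus the $v_{m-1}$ and the factor $m(d+1)$ in your regrouped sum).
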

\begin{proof}
    We compare intersections with the basis
    $\{H_iH_jH_k\}_{i<j<k}\cup\{H_iH_j\xi\}_{i<j}$ of $A^3(X)$ used in \Cref{cor:gendegOrbit}.

    Fix distinct $i,j,k$. By \Cref{lem:IntersecNum}, the intersection of $Z(v)$ with $H_iH_jH_k$ is $1$ precisely when $p_v(i),p_v(j),p_v(k)$ are distinct, and is $0$ otherwise. There is a unique internal vertex of $\Gamma$ at which the three paths joining $p(i),p(j),p(k)$ meet. Hence
    \[ H_iH_jH_k\bigm|_{\sum_v Z(v)}=1. \]

    Now fix distinct $i,j$, let $\mathcal{P}_{ij}$ be the internal vertices on the path from $p(i)$ to $p(j)$, and orient this path from $p(i)$ to $p(j)$. For $v\in\mathcal{P}_{ij}$, choose a representative
    $x_v=((P_{1,v},\ldots,P_{2d,v}),[f_{+,v}:f_{-,v}])$ as in \cref{cond:eachcycle}, and put $c_{i,v}:=c_{P_{i,v}}(x_v)$ and $c_{j,v}:=c_{P_{j,v}}(x_v)$. After normalizing $P_{i,v}=0$ and $P_{j,v}=\infty$, the Newton polytope of the diagonal-torus orbit of $[f_{+,v}:f_{-,v}]$ is
    \[ I_v=[c_{i,v},d+1-c_{j,v}]. \]
    Indeed, these are the smallest and largest nonzero torus weights in $V_{d+1}\oplus V_{d-1}$. Thus, with orbit-cycle multiplicity, the lattice length of $I_v$ is $\xi H_iH_j|_{Z(v)}$.

    Let $v,v'$ be consecutive vertices on the oriented path. By a casewise computation of signs of vertices and numbers of leaves on each side $v$ and $v'$, the definition of $c_P(x)$ gives
    \[ c_{i,v'}=d+1-c_{j,v}. \]
    Hence the intervals $I_v$ concatenate along the path. At the two ends their outer endpoints are $\epsilon_i$ and $d+1-\epsilon_j$, where $\epsilon_m=1$ for $m\leq d+1$ and $0$ otherwise. Vertices off the path contribute zero by \Cref{lem:IntersecNum}. Therefore
    \[ \xi H_iH_j\bigm|_{\sum_vZ(v)}
    =\sum_{v\in\mathcal{P}_{ij}}\fct{length}(I_v)
    =d+1-\epsilon_i-\epsilon_j. \]
    These are exactly the intersections of the generic orbit-cycle class in \cref{eq:gendegX} with the chosen basis, which proves the equality of rational-equivalence classes.
\end{proof}

\subsection{Specializations and Orbits} \label{sec:specializations}
In the analysis of the Chow quotient, constraints among orbits under different specializations are crucial. We fix a finite valued extension $K$ of $k((t))$, and extend its valuation to $\ol K$. As above, analytic points are taken over $\widehat{\ol K}$, whereas any finite collection of algebraic marked points and coordinate changes is defined over a finite extension of $K$.
Write $\zeta=D(0,1)$ for the Gauss point and $\red_\zeta$ for its reduction map to $\pone(k)$. We choose coordinate changes toward the Gauss point: for $\gamma\in\pglt(\ol K)$ put
\[ \zeta_\gamma:=\gamma^{-1}\zeta,\qquad
\red_\gamma:=\red_\zeta\circ\gamma. \]
For a marked map $x$, its specialization in these coordinates is $\spe(\gamma\cdot x)$. The notation $\spe_\zeta$ denotes coefficient specialization in Gauss coordinates. Order differences after a coordinate change are computed using an $\slt$-lift; changing the lift does not change the coefficient valuations.

\begin{definition}For the additive valuation $v_K : K \to \real \cup \{ +\infty \}$ and a point $[f] = [ f_{+} , f_{-}] \in \crat_d ( K )$, \emph{the order difference} is the element of $\real\cup\{\pm\infty\}$ defined by
\eqns{
\ord & \fct{diff} [f] := \min_i v_K(a_{i,+}) - \min_i v_K(a_{i,-}) \text{, where } \widetilde{f}_{\pm} = \sum_i a_{i,\pm} x_0^{d\pm1 - i} x_1^i \\
& \text{for any affine lifting }(\widetilde{f}_+ , \widetilde{f}_-) \in (V_{d+1} \oplus V_{d-1})(K) \text{ of }[f_+ , f_- ].
}
Here a zero summand has minimum coefficient valuation $+\infty$, and we use the conventions $a-(+\infty)=-\infty$ and $(+\infty)-a=+\infty$ for $a\in\real$.
\end{definition}

\begin{lemma} \label{lem:Orddiff}
    Let $[f] = [ f_{+} , f_{-}] \in \mrat_d ( \fsfld )$ and put \[ \spe [f] =:  [\ol{f}_+ , \ol{f}_{-}] \in \ol{\mrat}_d  (k). \] Then we have 
        \eqns{ 
        & \ol{f}_{+}= 0 \text{ if and only if }  \ord \fct{diff} [f] > 0 \text{, and}\\ 
        & \ol{f}_- = 0 \text{ if and only if }\ord \fct{diff} [f] < 0. }
\end{lemma}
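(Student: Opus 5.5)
The plan is to compute the specialization directly from a normalized affine lift. Choose an affine lifting $(\widetilde f_+,\widetilde f_-)$ of $[f]$ over $K$, and put
\[ m_\pm:=\min_i v_K(a_{i,\pm}), \qquad m:=\min(m_+,m_-). \]
Since $[f]\in\mrat_d(K)$ is a nonzero point, $m<+\infty$. Pick a uniformizer power, or more generally an element $u\in K$ with $v_K(u)=m$; this is possible after the finite extension already allowed, since the values lie in the value group. Then the rescaled lift $(u^{-1}\widetilde f_+,u^{-1}\widetilde f_-)$ has all coefficients in $A=\oring_K$, and at least one coefficient is a unit.

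By the valuative criterion as used in \Cref{prop:valucriterion}, the specialization is obtained by reducing this normalized lift modulo $\maxid_A$. Concretely, $\spe[f]$ is the point $[\ol f_+:\ol f_-]$ with
\[ \ol f_\pm=\text{reduction of } u^{-1}\widetilde f_\pm . \]
This is a genuine point of $\proj(V_{d+1}\oplus V_{d-1})$ because some coefficient reduces to a nonzero element. The fact that $\ord\diff$ is independent of the chosen lift is immediate, since rescaling by $\lambda\in K^\times$ shifts $m_+$ and $m_-$ by the same amount $v_K(\lambda)$.

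It then remains to read off when a summand reduces to zero. The component $\ol f_+$ vanishes if and only if every coefficient of $u^{-1}\widetilde f_+$ lies in $\maxid_A$. This is equivalent to $m_+-m>0$, that is, $m_+>m$. Since $m=\min(m_+,m_-)$, this is the same as $m_+>m_-$, which is exactly $\ord\diff[f]=m_+-m_->0$. The conventions for $+\infty$ handle the degenerate case where $\widetilde f_+=0$: then $m_+=+\infty$, so $\ord\diff=+\infty>0$, and indeed $\ol f_+=0$.

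The symmetric argument gives $\ol f_-=0$ if and only if $m_->m_+$, which is the condition $\ord\diff[f]<0$. The convention $a-(+\infty)=-\infty$ covers the case $\widetilde f_-=0$.

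There is no real obstacle in this argument. The only point needing care is that specialization in $\proj^{2d+1}$ is computed by clearing denominators to a primitive integral vector. This is standard, and the rescaling-invariance of $\ord\diff$ makes the statement independent of that choice.
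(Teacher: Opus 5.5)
Your argument is correct and is exactly what the paper means by ``immediate from the definition of the specialization and the order function'': you normalize the lift to a primitive integral vector and observe that the $V_{d+1}$-summand reduces to zero precisely when $m_+>m_-$, and the $V_{d-1}$-summand precisely when $m_->m_+$. The only superfluous point is the appeal to a finite extension, since a coefficient attaining the minimum valuation already serves as your $u\in K$.
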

\begin{proof}
This is immediate from the definition of the specialization $\spe [f]$ and the order function.
\end{proof}
\begin{lemma} \label{lem:Xmaxdimorbit}
 Let $x=(P_1,\ldots,P_{2d},[f_+:f_-])\in\pmcrat_d(K)$. For an arbitrary $\gamma\in\pglt(\ol K)$, let \[ \fct{red}_\gamma:\ponean_K\setminus\{\zeta_\gamma\}\to T_{\pone}\zeta_\gamma\simeq\pone(k) \] be the reduction map in the coordinates specified above.
    We put \[ S_\gamma := \{ Q \in \pone_k \mid Q = \red_{\gamma} ( P_i) \text{ for some }i \in \{1,\ldots, 2d \} \} \]
    and for any $Q \in S_{\gamma}$, 
    \[ S_Q := \{ i \mid \fct{red}_{\gamma}(P_i) = Q \}. \]
    If $\xi:=\gamma^{-1}\cdot\zeta$ and $\dim \ol{\pglt\cdot\spe(\gamma\cdot x)}=3$, the orbit cycle
    \[ Z_\xi(x):=\ol{\pglt\cdot\spe(\gamma\cdot x)} \]
    depends only on $\xi$ and $x$, and not on the choice of $\gamma$.
    Moreover,
    \[ \dim \ol{ \pglt \cdot \spe (\gamma \cdot x) } = 3 \]
    if and only if
    \begin{enumerate}
        \item $\# S_\gamma \geq 3$, or
        \item $\# S_\gamma = 2,\ \ord \diff (\gamma \cdot [f_+, f_-]) = 0$ and for the points $Q, Q'$ in $S_{\gamma}$, 
        \[ \# ( S_{Q} \cap [1,d+1] ) - \# ( S_{Q} \cap [d+2, 2d] ) \neq  \# ( S_{Q'} \cap [1,d+1] ) - \# ( S_{Q'} \cap [d+2, 2d] ).  \]
    \end{enumerate}
\end{lemma}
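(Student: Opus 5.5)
Two things have to be shown: the orbit cycle is independent of the choice of $\gamma$, and the dimension is $3$ exactly under conditions (1) or (2).

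\textbf{Independence of $\gamma$.} Suppose $\gamma,\gamma'\in\pglt(\ol K)$ satisfy $\gamma^{-1}\zeta=\gamma'^{-1}\zeta=\xi$. Then $\delta:=\gamma'\gamma^{-1}$ fixes the Gauss point $\zeta$. The stabilizer of $\zeta$ in $\pglt(\ol K)$ is $\pglt(\oring_{\ol K})$ times scalars. So, after rescaling an $\slt$- or $\glt$-lift, $\delta$ has integral entries and unit determinant, and its reduction $\ol\delta$ lies in $\pglt(k)$.

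Working over a finite extension containing all coordinates, specialization commutes with integral coordinate changes. This gives
\[ \spe(\gamma'\cdot x)=\spe(\delta\cdot(\gamma\cdot x))=\ol\delta\cdot\spe(\gamma\cdot x). \]
For the factor $(P_i)$ this is the statement $\red_\zeta\circ\delta=\ol\delta\circ\red_\zeta$. For $[f_+:f_-]$, an integral unimodular substitution preserves the minimal coefficient valuation of each summand. Hence the same normalization works, and the reduction of the transformed pair is the $\ol\delta$-transform of the reduction.

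The two specializations therefore lie in one $\pglt(k)$-orbit. Their orbit closures and stabilizer orders agree, so the orbit cycles agree.

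\textbf{Dimension criterion.} Write $y:=\spe(\gamma\cdot x)=(b,[\ol f_+:\ol f_-])$, where $b=(\red_\gamma P_i)_i$. The set of points occurring in $b$ is exactly $S_\gamma$, and the fibres of $b$ are the $S_Q$.

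If $\# S_\gamma\ge 3$, then $G\cdot b$ is already $3$-dimensional, so $\dim G\cdot y=3$.

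If $\# S_\gamma=1$, then $b\in B_1$. By \Cref{prop:orbdecomp}, $\pi_B^{-1}(G\cdot b)$ is a union of orbits of dimension at most $\dim\pi_B^{-1}(B_1)=2$, so the dimension is less than $3$.

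If $\# S_\gamma=2$, then $b\in B_A$ with $A=\{S_Q,S_{Q'}\}$. By \Cref{prop:orbdecomp}, $G\cdot y$ is $3$-dimensional if and only if $d(A)\neq 0$ and $y$ lies in the open orbit $\{\ol f_+\ol f_-\neq0\}$. This is because the stabilizer torus of $b$ then acts with distinct weights on the two summands. By \Cref{lem:Orddiff}, $\ol f_+\ol f_-\ne0$ is equivalent to $\ord\diff(\gamma\cdot[f_+,f_-])=0$.

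It remains to translate $d(A)\neq0$. Put $a_Q=\#(S_Q\cap[1,d+1])$ and $b_Q=\#(S_Q\cap[d+2,2d])$. Using $a_Q+a_{Q'}=d+1$ and $b_Q+b_{Q'}=d-1$,
\[ a_Q-b_Q-1=-(a_{Q'}-b_{Q'}-1), \]
so
\[ (a_Q-b_Q)-(a_{Q'}-b_{Q'})=2(a_Q-b_Q-1). \]
Hence $d(A)\ne0$ is exactly the inequality in condition (2).

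\textbf{Main obstacle.} The delicate step is justifying $\spe(\delta\cdot z)=\ol\delta\cdot\spe(z)$ for the projective coefficient point $[f_+:f_-]$. One must check that an integral unimodular change of variables acts on $V_{d+1}\oplus V_{d-1}$ by a matrix in $\glt$ over $\oring$ (indeed the Clebsch–Gordan maps are defined over $\zahl[1/(d+1)]$ and $\chara k=0$). Such a matrix preserves the minimal valuation of a lift.
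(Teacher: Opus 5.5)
Your proof is correct and is essentially the paper's argument: $\gamma'\gamma^{-1}$ lies in $\pglt(\oring_{\ol K})$ and its reduction carries one specialization to the other, and after reducing to $\gamma=\id$ the dimension criterion follows from \Cref{prop:orbdecomp} and \Cref{lem:Orddiff}, with your identity $(a_Q-b_Q)-(a_{Q'}-b_{Q'})=2(a_Q-b_Q-1)$ spelling out the translation of $d(A)\neq0$ that the paper leaves implicit. One small correction to your closing remark: the integral matrix in question is the block action of an $\slt(\oring)$-lift on $V_{d+1}\oplus V_{d-1}$ by substitution (not ``a matrix in $\glt$''), and the Clebsch--Gordan isomorphism plays no role there, since $[f_+:f_-]$ are already the coordinates being acted on.
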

\begin{proof}
    If $\gamma^{-1}\zeta=\gamma'^{-1}\zeta$, then $\gamma'\gamma^{-1}\in\pglt(\oring_{\ol K})$, whose reduction carries $\spe(\gamma\cdot x)$ to $\spe(\gamma'\cdot x)$. Their orbit cycles, including stabilizer multiplicities, therefore coincide, so $Z_\xi(x)$ is well-defined.

    After a finite extension of $K$ and replacing $x$ by $\gamma\cdot x$, it is enough to treat $\gamma=\fct{id}$. Then the assertion follows from \Cref{prop:orbdecomp} and \Cref{lem:Orddiff}.
\end{proof}

\begin{lemma} \label{lem:orddifftree}
    Let $x_K=(P_1,\ldots,P_{2d},[f_+:f_-])\in\pmcrat_d(K)$ with $f_+f_-\neq0$, and let $T_x$ be the combinatorially finite $\real$-tree spanned by the marked points. For any type~II point $\xi=\zeta_\gamma\in T_x$, put
    \[ h_x(\xi):=\ord\fct{diff}(\gamma\cdot[f_+:f_-]). \]
    \begin{enumerate}
        \item \label{item:hxwelldef} The function $h_x$ is independent of the choice of $\gamma$ with $\gamma^{-1}\cdot\zeta=\xi$.
        \item \label{item:hslope} Take a segment of $T_x$ oriented from $v$ to $v'$. If its interior contains no branch point or marked retraction, then
    \eqnl{ \partial_{v\to v'}h_x=\frac{W_e(v')-W_e(v)}{2}. \label{eq:slope}}
        Thus $h_x$ extends to a continuous piecewise-linear function on the nonclassical part of $T_x$.
        \item \label{item:htree} There are at most finitely many $\xi \in \ponean_K$ such that $\dim \ol{\pglt\cdot\spe(\gamma\cdot x_K)}=3$ and $\gamma^{-1}\cdot\zeta=\xi$. Moreover, every such point $\xi$ is contained in $T_x$.
    \end{enumerate}
\end{lemma}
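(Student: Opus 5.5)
The three items will be handled in order, each reduced to an explicit computation in a normalized coordinate.

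\emph{Item~\cref{item:hxwelldef}.} Take two choices $\gamma,\gamma'$ with $\gamma^{-1}\zeta=\gamma'^{-1}\zeta$. As in the proof of \Cref{lem:Xmaxdimorbit}, $g:=\gamma'\gamma^{-1}$ fixes the Gauss point, so $g\in\pglt(\oring_{\ol K})$. After a finite extension we may take a lift in $\slt(\oring_{\ol K})$, since the determinant is a unit and has a square root. The Clebsch--Gordan decomposition is $\slt$-equivariant and defined over $\zahl[1/(d+1)!]$, which is harmless in residue characteristic $0$. Hence $g$ and $g^{-1}$ act by integral matrices on each summand $V_{d+1}$ and $V_{d-1}$ separately, so they preserve the standard $\oring$-lattices of both summands. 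Therefore the minimal coefficient valuations of $\widetilde f_+$ and $\widetilde f_-$ are separately unchanged. Changing the $\slt$-lift only changes signs, so $\ord\diff$ is unchanged.

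\emph{Item~\cref{item:hslope}.} After a translation by $\pglt(\ol K)$ we may assume the segment lies on the geodesic from $0$ to $\infty$, with points $\zeta_r=D(0,\exp(-r))$ for $r$ in an interval. We use $\gamma_r=\fct{diag}(t^{r/2},t^{-r/2})$ for rational $r$, after a finite extension. Then $a_{+,i}$ is rescaled by $t^{r(i-(d+1)/2)}$ and $a_{-,i}$ by $t^{r(i-(d-1)/2)}$, up to the sign convention. Each of $\min_i v(a_{\pm,i})$ thus becomes a minimum of finitely many affine functions of $r$, hence continuous and piecewise linear; this already gives the extension claim.

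To get the slope we use the Newton polygon. Since $f_+=u\prod_{i\le d+1}(\alpha_{i,1}x_0-\alpha_{i,0}x_1)$ with $u\neq0$ (here $f_+f_-\neq0$ is used), the index attaining the minimum at radius $\exp(-r)$ equals the number of roots $P_1,\dots,P_{d+1}$ lying in the component of $\ponean\setminus\{\zeta_r\}$ toward $0$, up to the obvious orientation convention. This count is constant on the segment because no marked retraction lies in its interior. Write $a$ and $b$ for the numbers of $+$- and $-$-labels on the $0$-side. Then the slopes of the two minima are $a-\tfrac{d+1}{2}$ and $b-\tfrac{d-1}{2}$, up to a common sign, so the slope of $h_x$ is $\pm(a-b-1)$. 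On the other hand $W_e(0\text{-side})=a-b$ and $W_e(\infty\text{-side})=(d+1-a)-(d-1-b)=2-a+b$, so half their difference is $a-b-1$. It remains to check the sign against the chosen orientation, which is bookkeeping.

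\emph{Item~\cref{item:htree}.} We apply \Cref{lem:Xmaxdimorbit}. If $\xi\notin T_x$, all $P_i$ reduce to the single direction pointing toward $T_x$, so $\#S_\gamma=1$ and the orbit is not three-dimensional. If $\xi\in T_x$ is neither a branch point nor a marked retraction, then $\#S_\gamma=2$, and three-dimensionality requires $h_x(\xi)=0$ together with $W_e(v)\neq W_e(v')$. The latter condition is exactly that the slope of $h_x$ on that edge is nonzero, by the computation in item~\cref{item:hslope}. A linear function with nonzero slope vanishes at most once on each of the finitely many edges of $T_x$. Together with the finitely many branch points and retractions, this leaves only finitely many possible $\xi$.

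The main obstacle is item~\cref{item:hslope}: the Newton-polygon identification of the extremal coefficient index with the root count on one side, and matching its sign with the orientation convention for $W_e$.
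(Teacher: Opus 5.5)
Your proposal is correct and follows essentially the same route as the paper. For (1) you use invariance of coefficient valuations under the Gauss-point stabilizer $\pglt(\oring)$. For (2) you compute the slope explicitly along a normalized geodesic. For (3) you combine \Cref{lem:Xmaxdimorbit} with the nonzero slope on each edge.

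The only difference is cosmetic. In (2) you read the slope off the Newton polygon of each summand, whereas the paper adds the $\pm\frac{1}{2}$ slopes of the individual linear factors; both are multiplicativity of the Gauss norm. The sign check you defer does come out as slope $a-b-1$ in the direction of the $0$-side, as required.
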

\begin{proof}
    Let us take a factorization of an affine lifting 
    \[ \widetilde{f}_+=c_+\prod_{i=1}^{d+1}[P_i,z],\qquad \widetilde{f}_-=c_-\prod_{i=d+2}^{2d}[P_i,z]. \]

    \cref{item:hxwelldef} The valuation of each linear factor is invariant under the action of $\pglt(\oring_K)$.

    \cref{item:hslope} With an $\slt$-normalized coordinate change, the minimum coefficient valuation of each transformed linear factor is affine with slope $+1/2$ or $-1/2$ along a segment of $T_x$, according to the component containing its marked endpoint. Subtracting the two sums gives \cref{eq:slope} with the metric of Section~2.3.

    \cref{item:htree} By \Cref{lem:Xmaxdimorbit}, a point $\xi = \gamma^{-1}\cdot\zeta$ with $\dim \ol{\pglt\cdot\spe(\gamma\cdot x_K)}=3$ is contained in $T_x$. Moreover, if $\# S_\gamma = 2$, then the value
    \[
    \# ( S_{Q} \cap [1,d+1] ) - \# ( S_{Q} \cap [d+2, 2d] ) - ( \# ( S_{Q'} \cap [1,d+1] ) - \# ( S_{Q'} \cap [d+2, 2d] )) \]
    is twice the slope of $h_x$ along the segment oriented toward $Q$. Since this slope is nonzero, there are at most finitely many such points $\xi$ with $h_x(\xi) = 0$.
\end{proof}

\subsection{The construction of the map ChowData}
We first construct the map $\fct{ChowData}$.
\begin{proposition} \label{prop:YdatumInj}
    There is an injection
    \[ \fct{ChowData}:X_d:=\pmcrat_d\sslash_{Ch}G
    \to\{\text{Chow data on }\pmcrat_d\} \]
    such that, if $\fct{ChowData}(y)=(\Gamma,\sigma,Z)$, then
    \eqnl{ y=\sum_{v\in V(\Gamma)\setminus L(\Gamma)}Z(v). }
\end{proposition}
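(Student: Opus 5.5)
Given $y\in X_d$, we will read off a Chow datum from the support of $y$ and check that it is forced. By \Cref{prop:valucriterion} applied to the dense open generic orbit-cycle locus $U_X$ of \cref{eq:gendeglocU}, we choose, after a finite extension, $x_K=(P_1,\ldots,P_{2d},[f_+:f_-])\in U_X(K)$ whose orbit cycle specializes to $y$; we may also assume $f_+f_-\neq0$. Let $T_x$ be the tree spanned by the marked points, and $h_x$ the piecewise-linear order-difference function of \Cref{lem:orddifftree}. We define the vertex set of $\Gamma$ to consist of the leaves $p(i)$ (the half-lines toward $P_i$), together with the finitely many points $\xi\in T_x$ at which $Z_\xi(x_K)$ is three-dimensional. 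These points are finite in number and lie in $T_x$ by \Cref{lem:orddifftree}\cref{item:htree}, and they include all branch points of $T_x$ by \Cref{lem:Xmaxdimorbit}(1). The edges come from adjacency in $T_x$. We set $\sigma(\xi):=\sgn h_x(\xi)$ and $Z(\xi):=Z_\xi(x_K)$.

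The contour conditions then follow directly. The slope identity \cref{eq:slope} is the defining condition \cref{eq:weightorder}. A valence-two vertex is a non-branch point of type (2) in \Cref{lem:Xmaxdimorbit}, so $h_x=0$ there, and the nonzero slope forces the neighbour signs $-$ and $+$. At a leaf $i\geq d+2$, $h_x\to-\infty$, so $\sigma(p(i))=-$. For \cref{cond:eachcycle}, $\fib{b}=\fib{p_v}$ holds because reduction at $\xi$ separates the marked points exactly by the directions at $\xi$, and the sign condition on $f_\pm$ is \Cref{lem:Orddiff}. For \cref{cond:edgeorbit}, take an interior edge $e$ and a point $\eta$ in the open segment between $v$ and $v'$. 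The specialization at $\eta$ is a point with base in $B_{A_e}$, and its orbit is two-dimensional. Approaching $v$ or $v'$, \Cref{cor:boundaryorbit}\cref{item:ocjoint} identifies this orbit as the unique boundary orbit of $Z(v)$ and of $Z(v')$ over $B_{A_e}$. In the contact case $h_x(\eta)=0$ along the edge, it is the orbit of type $f_+f_-\neq0$, which is constant on the interior.

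Next we show $\sum_v Z(v)=y$. Each $Z(v)$ lies in the specialization of the orbit cycle: translating by $\gamma$ with $\gamma^{-1}\zeta=v$ gives a valued point of the generic orbit specializing to $\spe(\gamma\cdot x_K)$, so $\supp Z(v)\subset\supp y$. By \Cref{prop:stabilizermultiplicity}, $Z(v)\leq y$. The components for distinct $v$ are distinct, since their base decompositions $\fib{p_v}$ differ, so $\sum_v Z(v)\leq y$. By \Cref{lem:Chowdatumclass}, both sides have class $\delta(G,X)$, and the difference is an effective cycle of class zero, hence zero, because degrees against an ample class vanish.

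Finally, injectivity and well-definedness: since $y=\sum_v Z(v)$, the datum can be recovered from $y$ alone. The vertices correspond to the irreducible components of $\supp y$ with multiplicities. Each vertex's decomposition $\fib{p_v}$ is read off from $\pi_B$ of its generic point, and the leaf-marked tree is reconstructed from its vertex decompositions. $\sigma$ is read off from which of $f_\pm$ vanishes on the generic point. Hence the datum is independent of the choice of $x_K$, and distinct $y$ give distinct data.

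The main obstacle is the edge condition: checking that the interior specializations of each edge give one common two-dimensional orbit lying in both adjacent closures, especially along zero-contour edges. I would handle it by normalizing so that $v$ is Gauss and $v'$ lies toward $0$, then computing the limits of $\spe(\gamma_s\cdot x)$ along $\gamma_s=\mathrm{diag}(t^{s},1)$ explicitly.
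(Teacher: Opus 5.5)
Your proposal is correct and follows essentially the same route as the paper. You lift $y$ to a generic valued point $x_K$ with $f_+f_-\neq0$, take as vertices the finitely many points of $T_x$ with three-dimensional specialized orbit, and read the signs from $h_x$. You then get $y\geq\sum_v Z(v)$ from \Cref{prop:stabilizermultiplicity}, upgrade this to equality via \Cref{lem:Chowdatumclass}, and deduce injectivity from the unique component decomposition of $y$. Your treatment of the edge condition and of the valence-two and leaf signs is more explicit than the paper's, which simply asserts that specialization at an interior point of each segment gives the common two-dimensional orbit $O_e$.
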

\begin{proof}
    We use the specialization results of \Cref{lem:orddifftree,lem:Orddiff,lem:Xmaxdimorbit}.

    Let $y\in X_d$. After a finite valued-field extension, the valuative criterion for properness gives a point
    \[ x_K=(P_1,\ldots,P_{2d},[f_+:f_-])\in U_X(K),\qquad f_+f_-\neq0, \]
    whose orbit cycle specializes to $y$. Let $T_x$ and $h_x$ be as in \Cref{lem:orddifftree}. By \Cref{lem:orddifftree}\cref{item:htree}, the set \[ T_S := \{ \xi \in \ponean_K \mid \xi = \gamma^{-1}\cdot\zeta,\ \dim \ol{G \cdot  \spe (\gamma \cdot x_K )} = 3 \} \]
    is finite, and each $\xi\in T_S$ determines the orbit cycle $Z_\xi(x_K)$. Suppressing redundant vertices gives a finite leaf-marked tree $\Gamma_y$.

    Let $\ell_i\in L(\Gamma_y)$ be the leaf corresponding to the marked analytic point $P_i$, and put $p_y(i)=\ell_i$. Let $c_y(i)=+1$ for $i\leq d+1$ and $c_y(i)=-1$ for $i\geq d+2$, and put $\sigma_y(v)=\sgn h_x(v)$ at internal vertices, using the limiting sign along the incident ray at each leaf. The slope formula of \Cref{lem:orddifftree} and the specialization criterion of \Cref{lem:Orddiff} show that $(\Gamma_y,\sigma_y)$ is a strata tree. For an internal vertex $v$, viewed as its corresponding point of $T_S$, put
    \[ Z_y(v):=Z_v(x_K). \]
    The vertex orbit closures have distinct marked-point partitions. By \Cref{prop:stabilizermultiplicity}, their contributions give
    \[ y \geq \sum_{v\in V(\Gamma_y)\setminus L(\Gamma_y)}Z_y(v). \]
    For every internal edge $e=\{v,v'\}$ of $\Gamma_y$, specialization at an ordinary point of the corresponding analytic segment gives a two-dimensional orbit $O_e$. Its marked-point partition is $A_e=\fib{p_e}$, and it is contained in both $\supp Z_y(v)$ and $\supp Z_y(v')$. Thus \cref{cond:edgeorbit} holds, while the construction gives \cref{cond:eachcycle}; hence $(\Gamma_y,\sigma_y,Z_y)$ is a Chow datum. By \Cref{lem:Chowdatumclass}, the cycle on the right has class $\delta(G,X)$, which is also the class of $y$. Their effective difference therefore has degree zero with respect to an ample divisor and must vanish, so the displayed inequality is an equality.
    Since an effective cycle has a unique decomposition into reduced irreducible components with multiplicities, and the fiber partitions and signs on each component recover the strata tree, this construction defines an injective map $\fct{ChowData}$.
\end{proof}

\subsection{Constant multiplication by twisting} \label{sec:branchtwist}

For surjectivity of the map, a key part is to adjust the aspect constant, the ratio of constants $[c_+ : c_-]$ of the point $[c_+ f_+ : c_- f_-] \in \crat_d$.

For the diagonal torus $T := \left\{ \begin{bmatrix} p & 0 \\ 0 & q\end{bmatrix} : pq \neq 0 \right\}$ of $\glt$ and its $k$-point $\gamma = \begin{bmatrix} p & 0 \\ 0 & q\end{bmatrix} \in T(k)$, we define \emph{the branch twist} on $K^2$ by 
\[ \tau_\gamma : K^2 \to K^2 : \tau_\gamma ( \beta_0, \beta_1 ) := \begin{cases}
    (q\beta_0, p\beta_1) & (v_K(\beta_0)<v_K(\beta_1)), \\
    (   \beta_0 , \beta_1) & (\text{otherwise}).
\end{cases} \]

The branch twist induces self-maps on $V_{n,K} = K[x_0,x_1]_n,\ \crat_d(K),$ and $\pmcrat_d (K)$ respectively by 
\eqns{ & \tau_{\gamma ,n} :  V_{n,K} \to V_{n,K} & &: c \prod_{i=1}^n [ \beta_i, x] \mapsto c \prod_{i=1}^n [ \tau_\gamma (\beta_i ), x]  \\
 & \tau_{\gamma, \ol{R}} :  \crat_d(K) \to \crat_d(K) & & :[f_+, f_-] \mapsto [\tau_{\gamma, d+1}(f_+), \tau_{\gamma , d-1} (f_-) ]\\
 & \ol{\tau_{\gamma, \pm}} : \pmcrat_d(K) \to \pmcrat_d(K) & & :  (P_1,\ldots , P_{2d}, [f_+,f_-]) \\
 & & & \ \ \mapsto (\tau_\gamma (P_1),\ldots , \tau_{\gamma}(P_{2d}), \tau_{\gamma , \ol{R}} ([f_+,f_-])). }

Put $s:=p/q$ and let
\[ \mu_s:\pone_K\longrightarrow\pone_K,
\qquad [u:v]\longmapsto[u:sv]. \]
The map $\tau_\gamma$ induces a map on $\pone_K$, denoted by $\ol{\tau_\gamma}$, which equals $\mu_s$ on the tangent direction $0$ at the Gauss point $\zeta$ and is the identity on every other tangent direction. Its extension to Berkovich points is defined by the three mutually exclusive cases
\[ \ol{\tau_\gamma}^{an}(\xi)=
\begin{cases}
\zeta & (\xi=\zeta),\\
\mu_s^{an}(\xi) & (\xi\neq\zeta\text{ and }\red_\zeta(\xi)=0),\\
\xi & (\xi\neq\zeta\text{ and }\red_\zeta(\xi)\neq0).
\end{cases} \]
For each type~II point $\xi=D(z_\xi,|a_\xi|)$, choose $z_\xi\in\ol K$ and $a_\xi\in\ol K^\times$, and put
\[ \iota_\xi:=\begin{bmatrix}a_\xi&z_\xi\\0&1\end{bmatrix}
\in\pglt(\ol K),\qquad \iota_\xi(\zeta)=\xi, \]
and take $\iota_\zeta=\id$. These choices can be made over a finite extension of $K$ for any finite collection of type~II points. If $\eta=\mu_s^{an}(\xi)$, we make the compatible choices $z_\eta=sz_\xi$ and $a_\eta=a_\xi$. For $x\in\pone(\ol K)$, write
\[ \red_\xi(x):=\red_\zeta(\iota_\xi^{-1}(x))\in\pone(k). \]
Thus $\red_\xi=\red_\gamma$ for $\gamma=\iota_\xi^{-1}$.

\begin{lemma} \label{lem:ShiftPoint}
For any $\gamma\in T(k)$, $x\in\pone(K)$, and type~II point $\xi\in\ponean_K$, let $y:=\ol{\tau_\gamma}(x)$ and $\eta:=\ol{\tau_\gamma}^{an}(\xi)$. Then
\[ \red_\eta(y)=
\begin{cases}
\mu_s(\red_\xi(x)) & (\xi\neq\zeta\text{ and }\red_\zeta(\xi)=0),\\
\red_\zeta(x) & (\xi=\zeta),\\
\red_\xi(x) & (\xi\neq\zeta\text{ and }\red_\zeta(\xi)\neq0).
\end{cases} \]
\end{lemma}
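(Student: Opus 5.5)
Since every object involved is defined through explicit Möbius maps, I will reduce the statement to a computation with $2\times2$ matrices and the Gauss reduction $\red_\zeta$. Recall that $\red_\xi(x)=\red_\zeta(\iota_\xi^{-1}x)$ and $\red_\eta(y)=\red_\zeta(\iota_\eta^{-1}y)$. The plan is to write $\iota_\eta^{-1}\circ\ol{\tau_\gamma}$ near $\xi$ in the form $\beta\circ\iota_\xi^{-1}$, with $\beta\in\pglt(\oring_{\ol K})$ an explicit matrix, and then reduce $\beta$ modulo the maximal ideal. The three cases of the statement match the three cases in the definition of $\ol{\tau_\gamma}^{an}$, and I will treat them separately.

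Throughout I use the coordinate $z=z_1/z_0$. On a point $[\beta_0:\beta_1]$, the branch twist acts as $z\mapsto sz$ exactly when $v_K(\beta_0)<v_K(\beta_1)$, that is, when $|z|<1$, i.e. $\red_\zeta(x)=0$. Otherwise it is the identity. So $\ol{\tau_\gamma}=\mu_s$ on the open disc $D(0,1)^-$ and $\ol{\tau_\gamma}=\id$ on its complement.

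\emph{Case $\xi=\zeta$.} Here $\eta=\zeta$ and $\iota_\zeta=\id$. If $\red_\zeta(x)=0$, then $\red_\zeta(sx)=0$, because $s\in k^\times$ preserves $|z|<1$. If $\red_\zeta(x)\neq0$, then $y=x$. In both cases $\red_\eta(y)=\red_\zeta(x)$.

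\emph{Case $\xi\neq\zeta$ with $\red_\zeta(\xi)\neq0$.} Then $\eta=\xi$. I first claim that, for any classical $x$, $x$ lies in $D(0,1)^-$ only when $\red_\xi(x)$ is the single direction at $\xi$ pointing toward $\zeta$. To see this, note that $D(0,1)^-$ is one connected component of $\ponean\setminus\{\zeta\}$, and $\xi$ lies in a different component. Hence the whole of $D(0,1)^-$ lies in one tangent direction at $\xi$. On every other direction $\ol{\tau_\gamma}$ is the identity, so $\red_\xi(y)=\red_\xi(x)$ there. On the direction toward $\zeta$, both $x$ and $y$ lie in that same direction, since $y$ is either $x$ or $sx$ with $sx\in D(0,1)^-$. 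So $\red_\xi(y)=\red_\xi(x)$ in all cases.

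\emph{Case $\xi\neq\zeta$ with $\red_\zeta(\xi)=0$.} Then $\xi=D(z_\xi,|a_\xi|)$ with $|z_\xi|<1$ and $|a_\xi|<1$, and by the compatible choices $\eta=D(sz_\xi,|a_\xi|)$. For $x\in D(0,1)^-$ I compute
\[ \iota_\eta^{-1}(sx)=\frac{sx-sz_\xi}{a_\xi}=s\,\iota_\xi^{-1}(x), \]
so $\red_\eta(y)=\mu_s(\red_\xi(x))$. For $x\notin D(0,1)^-$ we have $y=x$. Both $\red_\xi(x)$ and $\red_\eta(x)$ then equal $\infty$, since $|x-z_\xi|\geq1>|a_\xi|$, and $\mu_s(\infty)=\infty$. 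So the formula holds here as well.

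The main obstacle I expect is bookkeeping rather than mathematics. The paper's convention for matrices acting on $[z_0:z_1]$ and the quotient convention make it easy to confuse $\mu_s$ with $\mu_{s^{-1}}$ when reading off how $\tau_\gamma$ acts on $\beta=(\beta_0,\beta_1)$. I would fix this once by checking that $\tau_\gamma(\beta_0,\beta_1)=(q\beta_0,p\beta_1)$ gives $z=\beta_1/\beta_0\mapsto(p/q)z=sz$. I would also check that the independence of $\red_\xi$ from the choice of $(z_\xi,a_\xi)$ up to $\pglt(\oring)$-reduction is consistent with the chosen compatible representatives.
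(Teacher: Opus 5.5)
Your proof is correct and follows the paper's argument: it uses the same three-case split dictated by the definition of $\ol{\tau_\gamma}^{an}$. In the case $\red_\zeta(\xi)=0$ you use the identity $\iota_\eta^{-1}\circ\mu_s\circ\iota_\xi=\mu_s$ (your computation $\iota_\eta^{-1}(sx)=s\,\iota_\xi^{-1}(x)$), with both reductions equal to $\infty$ off the disc. In the remaining case you observe that the twisted disc lies in the single direction at $\xi$ toward $\zeta$. Your explicit checks of the ultrametric bound $|x-z_\xi|\geq1>|a_\xi|$ and of the convention $z\mapsto sz$ spell out what the paper states more briefly.
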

\begin{proof}
Suppose first that $\xi\neq\zeta$ and $\red_\zeta(\xi)=0$. If $x$ is in the tangent direction $0$ at $\zeta$, then $y=\mu_s(x)$ and the compatible coordinate choices give
\[ \iota_\eta^{-1}\circ\mu_s\circ\iota_\xi=\mu_s. \]
Hence $\red_\eta(y)=\mu_s(\red_\xi(x))$. If $x$ is outside that direction, both $\red_\xi(x)$ and $\red_\eta(y)$ are the direction toward $\zeta$, namely $\infty$, which is fixed by $\mu_s$.

If $\xi=\zeta$, write $x=[u:v]$. When $\red_\zeta(x)=0$, one has $v_K(u)<v_K(v)$ and
\[ y=[qu:pv],\qquad \red_\zeta(y)=[\ol u:0]=\red_\zeta(x). \]
Outside the direction $0$, the branch twist fixes $x$. Thus $\red_\zeta(y)=\red_\zeta(x)$ in all cases.

Finally, suppose that $\xi\neq\zeta$ and $\red_\zeta(\xi)\neq0$. Points modified by the branch twist lie in the direction from $\xi$ toward $\zeta$, and their images remain in that same direction; all other points are fixed. Therefore $\red_\xi(y)=\red_\xi(x)$.
\end{proof}
This lemma implies that under the Chow quotient morphism $\pi_{\ol{M}}  : \conf_n \pone \to  (\pone )^n \chquo G = \ol{M}_{0,n}$, we have
\[ \spe \pi_{\ol{M}} (\ol{\tau_\gamma} (P_1) ,\ldots , \ol{\tau_\gamma}(P_n)) = \spe \pi_{\ol{M}} (P_1,\ldots,P_n). \]

\begin{lemma} \label{lem:ShiftConstMult}
For a point $x = (P_1,\ldots , P_{2d}, [f_+,f_-]) \in \pmcrat_d (K)$, put $a := a_{0}(\spe x)$ and $b := b_{0}(\spe x)$, the numbers of $+$ and $-$ markings specializing to $0$, respectively. Moreover, for any $\gamma = \begin{bmatrix}
    p & 0 \\ 0 & q
\end{bmatrix}\in T(k)$ and any type II point $\xi \in \ponean_K$, put
\[ [\ol{f}_+,\ol{f}_-] := \spe_\zeta\!\left(\iota_\xi^{-1}[f_+,f_-]\right),
\qquad \eta := \ol{\tau_\gamma}^{an}(\xi). \]
For a binary form $f=c\prod_{i=1}^n[\beta_i,x]$, write
\[ \mu_{s,n}(f):=c\prod_{i=1}^n[\mu_s(\beta_i),x]. \]
Put $\ol f_{+,s}:=\mu_{s,d+1}(\ol f_+)$ and $\ol f_{-,s}:=\mu_{s,d-1}(\ol f_-)$. 
Then
\[ \spe_\zeta\!\left(\iota_\eta^{-1}\tau_{\gamma,\ol R}([f_+,f_-])\right)
=\begin{cases}
[q^2p^{a-b-2}\ol f_{+,s}:\ol f_{-,s}]
& (\xi\neq\zeta\text{ and }\red_\zeta(\xi)=0),\\
[q^{a-b}\ol f_+:\ol f_-] & (\xi=\zeta),\\
[q^{a-b}\ol f_+:\ol f_-]
& (\xi\neq\zeta\text{ and }\red_\zeta(\xi)\neq0).
\end{cases} \]
\end{lemma}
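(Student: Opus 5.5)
The plan is to work directly with the factored forms. Write
\[ f_+=c_+\prod_{i=1}^{d+1}[P_i,x],\qquad f_-=c_-\prod_{i=d+2}^{2d}[P_i,x], \]
with representatives $P_i=(\beta_{i,0},\beta_{i,1})$. The twist $\tau_{\gamma,\ol R}$ replaces each root by $\tau_\gamma(P_i)$ and leaves $c_\pm$ unchanged. The specialization of $\iota_\eta^{-1}\tau_{\gamma,\ol R}[f]$ is then computed factor by factor. Each linear factor $[\beta,x]$, after the coordinate change $\iota^{-1}$ and normalization by its minimal-valuation coefficient, specializes to $[\red(\beta),x]$ times a scalar. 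Hence
\[ \spe_\zeta(\iota^{-1}f)=\bigl(\text{leading unit}\bigr)\cdot\prod_i[\red(P_i),x]. \]
The claim therefore reduces to tracking how the twist changes two things: the reductions, which are controlled by \Cref{lem:ShiftPoint}, and the leading scalar of each factor, which is either $1$, $p$ or $q$.

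The key steps are as follows. First, fix normalized lifts. For a root with $v_K(\beta_0)<v_K(\beta_1)$, i.e.\ reducing to $0$ at $\zeta$ (these are exactly the $a$ plus-markings and $b$ minus-markings counted in the statement), $\tau_\gamma$ replaces $(\beta_0,\beta_1)$ by $(q\beta_0,p\beta_1)$. The dominant coefficient $\beta_0$ therefore gets multiplied by $q$, and every other root is unchanged. Second, treat the three cases separately.

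In the cases $\xi=\zeta$ and $\red_\zeta(\xi)\neq 0$, $\iota_\eta=\iota_\xi$, and by \Cref{lem:ShiftPoint} every reduction is unchanged. The only effect is the scalar $q$ on each twisted factor. This multiplies $\ol f_+$ by $q^{a}$ and $\ol f_-$ by $q^{b}$, giving $[q^{a-b}\ol f_+:\ol f_-]$ projectively. In the case $\red_\zeta(\xi)\neq0$, the twisted roots all lie in the direction of $\zeta$ from $\xi$. In the $\iota_\xi$-coordinates they reduce to $\infty$, and their scalar change still propagates as $q$ per factor, since the coordinate change $\iota_\xi^{-1}$ is linear and the dominant coefficient is the one scaled.

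In the case $\red_\zeta(\xi)=0$, $\xi\neq\zeta$, the relation $\iota_\eta^{-1}\circ\mu_s\circ\iota_\xi=\mu_s$ shows that roots inside the direction become $\mu_s$ of the old reductions. The factor $[\mu_s\beta,x]$ differs from the twisted factor by an explicit scalar: the twisted vector is $(q\beta_0,p\beta_1)=q(\beta_0,s\beta_1)$. Roots outside the direction reduce to $\infty$ and are fixed by $\mu_s$.

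Third, collect the scalars, taking care that $\mu_{s,n}$ on forms is defined through roots and the affine lift $\iota_\eta$ is not in $\slt$. The exponent $q^2p^{a-b-2}$ should emerge from these contributions. The $a$ versus $b$ twisted factors give $q^{a-b}$. The weight difference between $V_{d+1}$ and $V_{d-1}$ under the implicit renormalization $\mu_s$, i.e.\ the action of $\mathrm{diag}(1,s)$ on forms of degrees $d+1$ and $d-1$ relative to the $\slt$-normalization, contributes $s^{\pm(\ldots)}$. Combining these, and rewriting $q^{a-b}s^{\,a-b-2}\cdot(\ldots)$ in terms of $p$ and $q$, should give $q^2p^{a-b-2}$.

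The main obstacle is exactly this bookkeeping of scalars in the third case. One has to reconcile three normalizations: the $\slt$-lift convention for the action on forms, the root-product definition of $\mu_{s,n}$, and the non-unimodular matrices $\iota_\xi,\iota_\eta$. I would first verify the exponent on the simplest example, $d=2$ with all roots at $0$ or $\infty$ and $\xi=D(0,|t|)$, to fix signs. I would then argue that the general exponent depends only on the counts $a,b$, because every other factor contributes the same scalar to $f_+$ and $f_-$ up to the degree shift.
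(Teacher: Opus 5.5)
\textbf{Verdict: there is a genuine gap.} Your treatment of the cases $\xi=\zeta$ and $\xi\neq\zeta$, $\red_\zeta(\xi)\neq0$ agrees with the paper: twisted factors acquire $q$, the rest are unchanged, and $\iota_\eta=\iota_\xi$. The gap is the case $\xi\neq\zeta$, $\red_\zeta(\xi)=0$, which is the only case with real content. There you stop at ``should emerge'' and ``should give'', and propose to fix the exponent by testing an example.

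The one concrete statement you make about the remaining factors is also misleading. Roots outside the direction $0$ do reduce to $\infty$ at $\xi$, and $\infty$ is fixed by $\mu_s$ as a point. But $\mu_{s,n}$ acts on forms by $x_0\mapsto sx_0$, so it multiplies a specialized factor $-\ol{\beta_1}\,x_0$ by $s$. If these factors contributed no scalar, as your sketch suggests, your bookkeeping would give $[q^{a-b}\ol f_{+,s}:\ol f_{-,s}]$. You attribute the missing $s$-power to an $\slt$/weight normalization, but that is not its source. Since $a_\eta=a_\xi$, you can use the same $a_\xi^{1/2}$ in the $\slt$-lifts of $\iota_\xi^{-1}$ and $\iota_\eta^{-1}$, and the common factor $a_\xi^{n/2}$ then cancels.

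The missing computation is a direct substitution. For $L_\beta=\beta_0x_1-\beta_1x_0$,
\[ \iota_\xi^{-1}L_\beta=a_\xi^{1/2}\bigl(\beta_0x_1-a_\xi^{-1}(\beta_1-z_\xi\beta_0)x_0\bigr). \]

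\emph{Twisted roots.} Here $(q\beta_0,p\beta_1)=q(\beta_0,s\beta_1)$, and together with $z_\eta=sz_\xi$ this gives the exact identity $\iota_\eta^{-1}\tau_\gamma L_\beta=q\,\mu_{s,1}(\iota_\xi^{-1}L_\beta)$ over $K$.

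\emph{Untwisted roots.} Normalize so that $|\beta_1|=1$. Both $\iota_\eta^{-1}L_\beta$ and $\iota_\xi^{-1}L_\beta$ are dominated by their $x_0$-coefficients, namely $-a_\xi^{-1}(\beta_1-sz_\xi\beta_0)$ and $-a_\xi^{-1}(\beta_1-z_\xi\beta_0)$. After multiplying by $a_\xi$, both reduce to $-\ol{\beta_1}$, because $|a_\xi|,|z_\xi|<1$. Hence
\[ \spe_\zeta(\iota_\eta^{-1}L_\beta)=s^{-1}\mu_{s,1}\bigl(\spe_\zeta(\iota_\xi^{-1}L_\beta)\bigr)=\frac qp\,\mu_{s,1}\bigl(\spe_\zeta(\iota_\xi^{-1}L_\beta)\bigr). \]

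\emph{Collecting scalars.} For a form of degree $n$ with $c$ twisted roots, the product over factors gives the scalar $q^c(q/p)^{n-c}=q^np^{c-n}$. Taking the ratio of the values for $(n,c)=(d+1,a)$ and $(n,c)=(d-1,b)$ gives $q^2p^{a-b-2}$.

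So the $p$-dependence comes entirely from the untwisted factors, whose numbers in $f_+$ and $f_-$ differ by $2-a+b$. This is the ``degree shift'' you anticipated, but it has to be derived factor by factor rather than fitted to the expected answer.
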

\begin{proof}
The formula is immediate for a zero summand. For a nonzero $f\in V_{n,K}$, factor $f=c_0\prod_{i=1}^n[\beta_i,x]$, with each $\beta_i$ normalized so that its two coordinates have minimum valuation zero, and put
\[ c(f):=\#\{i\in[n]\mid v_K(\beta_{i,0})<v_K(\beta_{i,1})\}
=\#\{i\in[n]\mid\red_\zeta(\beta_i)=0\}. \]
Thus $c(f_+)=a$ and $c(f_-)=b$ whenever the corresponding summand is nonzero.

Suppose that $\xi\neq\zeta$ and $\red_\zeta(\xi)=0$. Write $\xi=D(z_\xi,|a_\xi|)$ and $\eta=D(sz_\xi,|a_\xi|)$. For a linear factor $L_\beta=\beta_0x_1-\beta_1x_0$, use the same choice of $a_\xi^{1/2}$ in the $\slt$-lifts of $\iota_\xi^{-1}$ and $\iota_\eta^{-1}$. Direct substitution gives
\[ \iota_\xi^{-1}(L_\beta)
=a_\xi^{1/2}\bigl(\beta_0x_1-a_\xi^{-1}(\beta_1-z_\xi\beta_0)x_0\bigr). \]
The common factor $a_\xi^{n/2}$ on $V_n$ cancels when comparing the pre-twist and post-twist specializations.
After applying the branch twist and using $z_\eta=sz_\xi$, its specialization is
\[ \spe_\zeta\!\left(\iota_\eta^{-1}\tau_\gamma(L_\beta)\right)
=\begin{cases}
q\,\mu_{s,1}\!\left(\spe_\zeta(\iota_\xi^{-1}L_\beta)\right)
& (\red_\zeta(\beta)=0),\\
\dfrac{q}{p}\,\mu_{s,1}\!\left(\spe_\zeta(\iota_\xi^{-1}L_\beta)\right)
& (\red_\zeta(\beta)\neq0).
\end{cases} \]
Multiplication over the factors therefore yields
\eqnl{
\begin{aligned}
\spe_\zeta\!\left(\iota_\eta^{-1}\tau_{\gamma,n}(f)\right)
&=q^{c(f)}\left(\frac qp\right)^{n-c(f)}
\mu_{s,n}\!\left(\spe_\zeta(\iota_\xi^{-1}f)\right)\\
&=q^np^{c(f)-n}\mu_{s,n}\!\left(\spe_\zeta(\iota_\xi^{-1}f)\right).
\end{aligned}
\label{eqn:speconst}}

At the Gauss point, a factor in the direction $0$ acquires the scalar $q$, while every other factor is unchanged. The same scalar calculation holds at a point $\xi\neq\zeta$ in any other tangent direction: every factor in the direction $0$ reduces at $\xi$ to the direction toward $\zeta$ and acquires the scalar $q$, while the remaining factors are fixed. Hence in each of these two cases
\[ \spe_\zeta\!\left(\iota_\xi^{-1}\tau_{\gamma,n}(f)\right)
=q^{c(f)}\spe_\zeta\!\left(\iota_\xi^{-1}f\right). \]

Applying these formulas first with $(n,c(f))=(d+1,a)$ and then with $(n,c(f))=(d-1,b)$, and only then passing to the projective pair, gives the three asserted formulas.
\end{proof}

\begin{corollary} \label{cor:ShiftConst}
        Let $Z \in \pmcrat_d \chquo G (k)$ and $\fct{ChowData}(Z) = (\Gamma , \sigma , \{ Z_v \} )$ be its Chow data. Let $[v,v'] \in E(\Gamma )$ be an edge such that $\sigma (v) \neq 0, \sigma (v') = 0$ and $\val (v) \geq 3$, let $V(\Gamma ) = \Gamma_v \sqcup \Gamma_{v'}$ be the edge decomposition and put
        \[ c_e := \sum_{\substack{i\in[2d]\\p(i)\in\Gamma_v}}c(i)
        - \sum_{\substack{i\in[2d]\\p(i)\in\Gamma_{v'}}}c(i) \neq 0. \]
        For every internal vertex $w$, choose a representative
        \[ x_w=(P_{1,w},\ldots,P_{2d,w},[\ol f_{+,w}:\ol f_{-,w}])
        \quad\text{such that}\quad Z_w=\ol{G\cdot x_w}. \]
        For any $\rho\in k^\times$, there exists a point $Z' \in \pmcrat_d \chquo G (k)$ such that
    \eqns{ \fct{ChowData}(Z' ) &= (\Gamma , \sigma,   \{ Z'_w \}_{w} ), \\ Z'_w &=
    \begin{cases}
    \ol{G\cdot(P_{1,w},\ldots,P_{2d,w},[\rho^{c_e}\ol f_{+,w}:\ol f_{-,w}])}
    & ( w \in \Gamma_{v'}), \\
    Z_w & (w \in \Gamma_v ).
    \end{cases} }
    The cycle in the first case is independent of the chosen representative $x_w$, because scaling the $V_{d+1}$-summand by $\rho^{c_e}$ defines a $G$-equivariant automorphism of $\pmcrat_d$.
\end{corollary}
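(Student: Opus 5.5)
The plan is to realize $Z'$ as the specialization of a twisted generic lift, using the branch twist of \Cref{sec:branchtwist}. First I choose a lift. By \Cref{prop:valucriterion} and the construction in the proof of \Cref{prop:YdatumInj}, there is, after a finite valued extension, a point $x_K=(P_1,\ldots,P_{2d},[f_+:f_-])\in U_X(K)$ with $f_+f_-\neq0$ whose orbit cycle specializes to $Z$. The vertices of $\Gamma$ are the points of $T_S\subset T_{x}$, and $Z_w=Z_w(x_K)$.

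Next I normalize coordinates. Conjugating $x_K$ by an element of $\pglt(\ol K)$, I take the edge $[v,v']$ so that $v$ is the Gauss point $\zeta$, and the segment toward $v'$ is the tangent direction $0$ at $\zeta$. This does not change the orbit cycles, by \Cref{lem:Xmaxdimorbit}. Then $\Gamma_{v'}$ consists exactly of the vertices $\xi\neq\zeta$ with $\red_\zeta(\xi)=0$, and $\Gamma_v$ consists of $\zeta$ together with the other directions. The hypothesis $\val(v)\geq3$ ensures that $v$ really is a branch vertex at $\zeta$.

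Then I apply a twist $\gamma=\mathrm{diag}(p,q)\in T(k)$ and set $x'_K:=\ol{\tau_{\gamma,\pm}}(x_K)$. By \Cref{lem:ShiftPoint}, the map $\ol{\tau_\gamma}^{an}$ fixes every vertex of $\Gamma_v$ and sends each vertex $\xi$ of $\Gamma_{v'}$ to $\mu_s^{an}(\xi)$. The reductions of the marked points change only by $\mu_s$, which is a Möbius map of $\pone_k$. So the marked tree is unchanged, and the fiber partitions at each vertex are preserved up to $G(k)$.

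By \Cref{lem:ShiftConstMult}, the specialized pair at a vertex of $\Gamma_{v'}$ becomes $[q^2p^{a-b-2}\ol f_{+,s}:\ol f_{-,s}]$. Up to the $G$-action by $\mu_s$, this is $[q^2p^{a-b-2}\mu\,\ol f_+:\ol f_-]$ for an explicit power $\mu$ of $s$ coming from the normalization of $\mu_{s,n}$. At the vertices of $\Gamma_v$ the pair becomes $[q^{a-b}\ol f_+:\ol f_-]$. Here $a,b$ are the numbers of $\pm$ markings in direction $0$, so $a-b$ equals the weight sum on the $v'$ side.

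I then use the freedom in $\gamma$. The global $G$-equivariant automorphism that scales $V_{d+1}$ by a constant, noted at the end of the statement, lets me divide all vertices by the common factor $q^{a-b}$. After this, only the ratio between the two sides remains, and it is a monomial in $p,q$ whose exponent I expect to be $\pm c_e$ times a unit power. Choosing $p,q$, after adjoining roots if needed, makes this ratio equal to $\rho^{c_e}$. The signs $\sigma$ are unchanged: $\sigma(v')=0$ means $f_+f_-\neq0$ is preserved by scaling, and on the side $\Gamma_v$ nothing changes after the normalization.

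Finally, the orbit cycles of $x'_K$ lie in $U_X$, and their specialization $Z'\in X_d$ has the stated Chow data by \Cref{prop:YdatumInj}.

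The main obstacle is the exponent bookkeeping: checking that the ratio between the constants on the two sides is exactly $\rho^{c_e}$ rather than some other power. Equivalently, I must show the relative scalar is a character of $(p,q)$ with exponent $c_e$. I would first handle this with the one-parameter choice $p=1$, $q=\rho$, or $p=\rho$, $q=1$, and compute directly with \cref{eqn:speconst}.

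One further point needs checking. The untouched vertices of $\Gamma_{v'}$ whose sign is $\pm$ absorb the scalar harmlessly, because a zero summand is unaffected. Their cycles are therefore independent of the scaling, consistent with the displayed formula, since scaling $\ol f_+$ when $\ol f_-=0$ or $\ol f_+=0$ does not change the orbit.
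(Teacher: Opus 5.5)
Your route is the paper's own. You lift to $x_K\in U_X(K)$, conjugate so that $v=\zeta$ and $\Gamma_{v'}$ lies in the tangent direction $0$ at $\zeta$, apply $\ol{\tau_{\gamma,\pm}}$, and read off the specializations from \Cref{lem:ShiftPoint,lem:ShiftConstMult}. However, you stop exactly at the one step that carries the content of the corollary. You never identify the power $\mu$ of $s$, and you only ``expect'' the relative factor between $\Gamma_{v'}$ and $\Gamma_v$ to be a character of $(p,q)$ governed by $c_e$. From \Cref{lem:ShiftConstMult} alone, that ratio is $(p/q)^{a-b-2+m}$ if $\mu=s^m$. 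So until $m$ is determined you have not shown that the twist changes the relative aspect constant at all, let alone by $\rho^{c_e}$. As written, the proposal is incomplete.

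The missing computation is short. Write $s=p/q=r^{-2}$ with $r\in k^\times$. Then $g_r=\fct{diag}(r^{-1},r)\in\slt(k)$ induces $\mu_s$, and on a product of $n$ linear factors $\mu_{s,n}(f)=r^{-n}(g_r\cdot f)$. Since $(d+1)-(d-1)=2$, this gives $\mu=r^{-2}=s$:
\[ [q^2p^{a-b-2}\ol f_{+,s}:\ol f_{-,s}]=g_r\cdot[q^2p^{a-b-2}s\,\ol f_+:\ol f_-]=g_r\cdot[qp^{a-b-1}\ol f_+:\ol f_-]. \]
Dividing by the factor $q^{a-b}$ found on $\Gamma_v$ leaves $(p/q)^{a-b-1}=s^{a-b-1}$. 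The leaves in direction $0$ are exactly those of $\Gamma_{v'}$, so $a-b=W_e(v')$. The total leaf-weight is $(d+1)-(d-1)=2$, so $c_e=2-2W_e(v')$ and $a-b-1=-c_e/2$. Hence the relative factor is $s^{-c_e/2}=r^{c_e}$. This character is nontrivial precisely because $c_e\neq0$, and you must take $r=\rho$, i.e.\ $p/q=\rho^{-2}$. Your trial choices $p=1,q=\rho$ and $p=\rho,q=1$ would instead give $\rho^{c_e/2}$ and $\rho^{-c_e/2}$. Taking $q=1$ and $p=\rho^{-2}$, as the paper does, also makes $q^{a-b}=1$. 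Then the global rescaling of $V_{d+1}$ you invoke is unnecessary, though it is legitimate, since it commutes with $G$ and preserves $U_X$.
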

\begin{proof}
Pick a point $x=(P_1,\ldots,P_{2d},[f_+,f_-]) \in U_X(K)$ such that $\spe \pi_U(x) = Z$, where $U_X$ is the generic degree locus of $\pmcrat_d$. By replacing $x$ by $\gamma_K \cdot x$ for some $\gamma_K \in \pglt (K)$, we assume that $Z_\zeta(x)=Z_v$ and $Z_\xi(x)=Z_{v'}$ for some point $\xi\neq\zeta$ such that $\red_\zeta(\xi)=0$. Set
\[ p=\rho^{-2},\qquad q=1,\qquad s=p/q=\rho^{-2},\qquad
\gamma=\begin{bmatrix}p&0\\0&q\end{bmatrix}, \]
and put $\tau(x):=\ol{\tau_{\gamma,\pm}}(x)$.

The leaves in the selected tangent direction form $\Gamma_{v'}$, so the integers in \Cref{lem:ShiftConstMult} satisfy
\[ a-b=W_e(v'). \]
Since the total leaf-weight is $(d+1)-(d-1)=2$, we have
\[ c_e=W_e(v)-W_e(v')=2-2W_e(v'),
\qquad a-b-1=-\frac{c_e}{2}. \]
Let
\[ g_\rho=\begin{bmatrix}\rho^{-1}&0\\0&\rho\end{bmatrix}\in\slt(k). \]
This element induces $\mu_s$ on $\pone$, and direct calculation on a product of $n$ linear factors gives
\[ \mu_{s,n}(f)=\rho^{-n}(g_\rho\cdot f). \]
Consequently, the marked specialization on the selected branch provided by \Cref{lem:ShiftPoint,lem:ShiftConstMult} is $G$-equivalent to
\eqns{
&(P_{1,w},\ldots,P_{2d,w},
[q p^{a-b-1}\ol f_{+,w}:\ol f_{-,w}])\\
&\qquad=(P_{1,w},\ldots,P_{2d,w},
[\rho^{c_e}\ol f_{+,w}:\ol f_{-,w}]).
}
Here the first equality uses the additional relative factor $s=p/q$ arising from the degree difference between $V_{d+1}$ and $V_{d-1}$, and the second uses $p=\rho^{-2}$ and $a-b-1=-c_e/2$. At the Gauss point and on every other tangent direction, the scalar in \Cref{lem:ShiftConstMult} is $q^{a-b}=1$, and \Cref{lem:ShiftPoint} leaves the marked configurations unchanged.

Therefore, for
\[ Z':=\spe\pi_U(\tau(x)), \]
the strata tree and all cycles on $\Gamma_v$ are unchanged, whereas every cycle on $\Gamma_{v'}$ is modified by the asserted relative factor $\rho^{c_e}$. This proves the result.
\end{proof}

\subsection{Surjectivity and Algebraicity}
\begin{proof}[Proof of \Cref{thm:Ydatum}]
    The construction and injectivity are established in \Cref{prop:YdatumInj}.
    Let $(\Gamma,\sigma,Z)$ be an arbitrary Chow datum. Choose positive integer internal edge lengths and a piecewise-linear function $h$ realizing its contour with the slopes of \cref{eq:weightorder}. Give each leaf edge infinite length. We now construct the marked points as in \Cref{ex:ConfigBerkovich}, read in the reverse direction. Root $\Gamma$ at the leaf $p(1)$, put $P_1=\infty$, and place the adjacent vertex $v_1$ at the Gauss point. For $i\neq1$, put $P_i := \sum_j \alpha_{j,i}t^{d(v_1,v_j)}$, where $j$ runs over the internal vertices on the path from $p(1)$ to $p(i)$. Here $\alpha_{j,i}$ is the coordinate of the direction toward $p(i)$ in the configuration at $v_j$, normalized so that the direction toward the root is $\infty$, and $d$ is the distance for the chosen edge lengths. The common two-dimensional edge orbits in \cref{cond:edgeorbit} ensure that the reductions chosen at the two ends of every internal edge have the same boundary specialization. Choose
    \[ f_+=c_+\prod_{i=1}^{d+1}[P_i,z],\qquad
       f_-=c_-\prod_{i=d+2}^{2d}[P_i,z] \]
    so that the order-difference function is $h$. By \Cref{lem:Orddiff,lem:Xmaxdimorbit}, the resulting specializations have the prescribed signs and reduced orbit supports. It remains only to match the nonzero aspect constants on the zero-contour components, where the aspect constants are shared through edges of $\sigma (v) = \sigma (v') = 0$ by \Cref{cor:boundaryorbit}. 
    Since $k$ is algebraically closed, successive applications of \Cref{cor:ShiftConst} along the rooted tree match the remaining aspect constants while preserving the marked stable curve and the previously matched branches. The resulting generic orbit cycle specializes to $\sum_vZ(v)\in X_d$, proving surjectivity.
\end{proof}
\begin{proof}[Proof of \Cref{cor:stratavariety}]
    Fix a strata tree $(\Gamma , \sigma )$. Then we can construct an injective morphism
    \eqnl{ I : \prod_{v \in V_{\mathrm{st}}(\Gamma)}M_{0,\val (v)} \times \gmult^{\sigma^{-1}(0) \cap V_{\fct{st}}(\Gamma)} \to \fct{Chow}(X, \delta (G,X)) \label{eq:fromstratavariety} }
    by taking the sum of the vertex orbit-cycle families through the addition morphism of the Chow scheme; each cycle corresponding to a vertex satisfies \Cref{def:Chowdatum} \Cref{cond:eachcycle}. The vertex components and their multiplicities are recovered from the irreducible factors of the Chow form, and their marking partitions distinguish the vertices. Since distinct vertex cycles have no common irreducible component, \Cref{lem:ChowFactorization} gives regular recovery of the summands on this decomposition locus. Fixing three distinct marked points on each stable vertex then recovers its configuration and, when present, its aspect parameter by the degree-one slice of \Cref{lem:degreeoneChowSlice}. Consequently $I$ is an embedding onto its locally closed image.
    By \Cref{prop:orbdecomp,cor:boundaryorbit}, the edge-orbit condition \Cref{def:Chowdatum} \Cref{cond:edgeorbit} is automatic unless $e = \{ v, v' \}$ satisfies $\sigma (v) = \sigma (v') = 0$. For such an edge, equality of the two-dimensional boundary orbits has the form $x_v=a_e x_{v'}$, where $a_e$ is an invertible regular function of the configurations. Since the zero-contour subgraph is a forest, these relations leave one free aspect parameter per connected component. By \Cref{thm:Ydatum}, the resulting locus is the tree stratum $(X_d)_{\Gamma,\sigma}$, and hence
    \eqn{ (X_d)_{\Gamma,\sigma}=\fct{Im} I \cap X_d
    \simeq \prod_{v \in V_{\mathrm{st}}(\Gamma)}M_{0,\val (v)} \times \gmult^{\alpha (\Gamma , \sigma)}.}
    Its decomposition stratum is determined only by the multiset of rational-equivalence classes of its irreducible components, so different tree strata may lie in the same decomposition stratum.
\end{proof}

\subsection{Well-Definedness of Multiplier Map}
Put $X=\pmcrat_d$. In either chart of \cref{eq:lFz}, the numerator and denominator vanish simultaneously exactly when $f_-(P)=0$ and the derivative of $f_+$ in a local coordinate at $P$ vanishes. We represent a quotient $A/B$ in that formula by $[B:A]\in\pone$, consistently with the affine coordinate $z_1/z_0$. Since $P_i$ is a root of $f_+$, this gives the following indeterminacy locus of
\eqnl{ \lambda_i : \pmcrat_d  \ratmap  \pone :\  \lambda_i(P_1,\ldots , P_{2d}, F) := \lambda_{F}(P_i) \ ( 1 \leq i \leq d+1) \label{eq:defli} }
:
\eqnl{ I(\lambda_i ) = \{ (P_1,\ldots , P_{2d}, [f_+,f_-]) \mid  \min ( \ord_{P_i}f_+ , \ord_{P_i}f_- + 1) \geq 2 \}. \label{eq:indetli} }

By \Cref{cor:gendegOrbit} and $G$-invariance, $\ol{G\cdot x}\mapsto\lambda_i(x)$ for $x\in U_X$ induces a rational map
\[ \ol{\lambda}_i : X_d  \ratmap \pone . \]

\begin{corollary} \label{cor:markChowQuotMultiplier}
    We have $I(\ol{\lambda}_i ) = \emptyset$, so that $\ol{\lambda_i} : X_d \to \pone$ is a morphism.
\end{corollary}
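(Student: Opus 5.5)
We want to show that $\ol{\lambda}_i:X_d\ratmap\pone$ is regular at every point $y\in X_d$. The tool is the valuative description of the graph in \Cref{prop:indetlocus}, combined with the explicit Chow data of \Cref{thm:Ydatum}. If $X_d$ is not known to be normal, the converse part of \Cref{prop:indetlocus} does not apply directly. In that case I would argue via a degree-one slice: exhibit $\ol{\lambda}_i$ locally as a regular composite, namely the map $y\mapsto Z(v)$ recovered by \Cref{lem:ChowFactorization} followed by $\lambda_i$ on a suitable orbit. So the plan has two parts. First, find for each $y$ a distinguished vertex cycle $Z(v_i)$ of $\fct{ChowData}(y)$ on which $\lambda_i$ is defined and constant. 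Second, show that every valued lift $x_K\in U_X(K)$ of $y$ has $\spe\lambda_i(x_K)$ equal to that constant.

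\textbf{Step 1 (the distinguished vertex).} Let $v_i$ be the internal vertex adjacent to the leaf $p(i)$, where $i\le d+1$ has weight $+1$. In the representative $x_{v_i}$, the $i$-th point $P_{i,v_i}$ is isolated among the marked points, since the leaf edge separates only $\{i\}$. Hence $a_{P_{i,v_i}}=1$ and $b_{P_{i,v_i}}=0$, so
\[ c_{P_{i,v_i}}(x_{v_i})=\min(1,1)=1 \text{ or } 1 \]
in each sign case of \cref{eq:abcdef}. Note that for $\sigma(v_i)=+$ we have $f_+=0$, giving $c=b+1=1$. By \cref{eq:indetli}, $x_{v_i}\notin I(\lambda_i)$. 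Since $\lambda_i$ is $G$-invariant, it is constant on $G\cdot x_{v_i}$; call this value $\lambda_i(y)$. In the sign cases the value is explicit from \cref{eq:lFz}: it is $\infty$ when $\sigma=+$ and $1$-type degenerate values when $f_-=0$ (specifically $-d$). These should be recorded but not needed further.

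\textbf{Step 2 (independence of the lift).} Take a lift $x_K$ with orbit cycle specializing to $y$. By the proof of \Cref{prop:YdatumInj}, there is a $\gamma\in\pglt(\ol K)$ with $\gamma^{-1}\zeta$ equal to the point of $T_S$ corresponding to $v_i$, and with $\spe(\gamma\cdot x_K)\in G\cdot x_{v_i}$. By \Cref{prop:ChowPushRat}, applied to the $G$-invariant map $\lambda_i$ (here $\pone$ is a trivial $G$-variety, and the Chow quotient of the target is just $\pone$), we get $\spe\lambda_i(\gamma\cdot x_K)=\lambda_i(\spe(\gamma x_K))$ because $\spe(\gamma x_K)\notin I(\lambda_i)$. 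Invariance then gives $\lambda_i(\gamma x_K)=\lambda_i(x_K)$. Thus every graph point over $y$ has second coordinate $\lambda_i(y)$, and the fibre of $\Gamma_{\ol\lambda_i}\to X_d$ over $y$ is a single point.

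\textbf{Step 3 (regularity).} This is the main obstacle, because $X_d$ need not be normal, so a singleton fibre does not by itself give regularity. I would instead build the map directly on a neighbourhood of each tree stratum. The construction is $y\mapsto Z(v_i)$ via \Cref{lem:ChowFactorization}, near the locus where the decomposition is unique, followed by a slice of \Cref{lem:degreeoneChowSlice} that fixes three marked points. This produces a regular point of the fibre $\pi_B^{-1}$, to which $\lambda_i$ is applied. The difficulty is that the decomposition type jumps between strata.

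To handle the jumps, I would use the universal cycle instead. Intersect it with $H_i$ and with hyperplanes fixing $P_j,P_k$ for two other indices chosen along the branch at $v_i$. This yields a family of points of $X$ over an open subset of $X_d$ that avoids $I(\lambda_i)$ by Step 1. Composing with $\lambda_i$ gives a regular map agreeing with $\ol\lambda_i$ generically, and this is the desired extension.
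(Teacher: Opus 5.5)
Your decisive construction, the second half of Step~3, is the paper's proof. The paper takes the degree-one slice $Z_y\cdot H_i(Q_0)\cdot H_j(Q_\infty)\cdot H_k(Q_1)$ of \Cref{lem:degreeoneChowSlice}, obtaining a morphism $\tau_{i;jk}:W_{i;jk}\to X$ on a neighbourhood of $y$ that avoids $I(\lambda_i)$. It then glues the composites $\lambda_i\circ\tau_{i;jk}$ by separatedness of $\pone$. The isolation computation of Step~1 ($c_{P_{i,v_i}}=1$, so $x_{v_i}\notin I(\lambda_i)$) is the input the paper also uses. Step~2 is unnecessary, because once a regular local extension exists, the singleton graph fibre follows rather than being needed. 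Also, \Cref{prop:ChowPushRat} does not literally apply with $\pone$ as a trivial $G$-variety, since the Chow quotient presupposes a generically free action; the direct PTS argument you sketch is the right justification.

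The genuinely under-specified step is the choice of $j,k$, which is exactly where Step~1 connects to the slice. Choosing the indices ``along the branch at $v_i$'' does not ensure that the slice point at $y$ lies in $G\cdot x_{v_i}$. You need $p(j),p(k)$ in two \emph{distinct} branches at $v_i$, both different from the branch of $p(i)$, so that $v_i$ is the median of $p(i),p(j),p(k)$. Then \Cref{lem:IntersecNum} gives intersection $1$ on $Z(v_i)$ and $0$ on every other vertex cycle. Hence the slice at $y$ is the unique point of $G\cdot x_{v_i}$ with $(P_i,P_j,P_k)=(Q_0,Q_\infty,Q_1)$. If $j,k$ lie in one branch, the median is another vertex $w$, where $P_{i,w}$ collides with other marked points and $x_w$ may lie in $I(\lambda_i)$.

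The existence of two such branches requires $\val(v_i)\geq3$, which you do not prove. If $\val(v_i)=2$, then $\fib{p_{v_i}}=\{\{i\},[2d]\setminus\{i\}\}$ has $d(A)=|1-0-1|=0$. By \Cref{prop:orbdecomp} every orbit over $B_A$ is then two-dimensional, contradicting \Cref{def:Chowdatum}~\cref{cond:eachcycle}.

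Finally, your aside on the multiplier values is wrong in one case. For $\sigma(v_i)=+$ one has $f_+=0$, and \cref{eq:lFz} gives $\lambda=f_-/f_-=1$, not $\infty$. The value $-d$ for $f_-=0$ is correct.
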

\begin{proof}
Fix a geometric point $y\in X_d$ with Chow data $\fct{ChowData}(y)=(\Gamma,\sigma,Z)$, and let $v=v_i$ be the internal vertex adjacent to $p(i)$. Then $\val(v)\geq3$: otherwise the split separating $p(i)$ from the remaining leaves has $d(A)=|1-0-1|=0$, giving $\dim G\cdot x_v=2$ by \Cref{prop:orbdecomp}, contrary to \cref{cond:eachcycle}.

Choose $j,k\in[2d]\setminus\{i\}$ in two distinct branches at $v$, different from the branch containing $p(i)$. Fix three distinct points $Q_0,Q_\infty,Q_1\in\pone$, and write
\[ H_r(Q):=\{((P_1,\ldots,P_{2d}),F)\in X\mid P_r=Q\}. \]
For $Z_y:=\sum_w Z(w)$, the unique median $v$ of the three leaves and \Cref{lem:IntersecNum} give that
\[ Z_y\cdot H_i(Q_0)\cdot H_j(Q_\infty)\cdot H_k(Q_1) \]
is an effective zero-cycle of degree one, contributed entirely by $Z(v)$, and represents the unique point of $G\cdot x_v$ with
\[ P_i=Q_0,\qquad P_j=Q_\infty,\qquad P_k=Q_1. \]

By \Cref{lem:degreeoneChowSlice}, after restricting to an open neighborhood $W_{i;jk}$ of $y$, this intersection defines a morphism
\[ \tau_{i;jk}:W_{i;jk}\longrightarrow X. \]
The branch of $p(i)$ at $v$ contains no other marked leaf. Hence, if $f_+\neq0$, then $P_{i,v}$ is a simple root of $f_+$; if $f_+=0$, then $f_-(P_{i,v})\neq0$. In either case $x_v\notin I(\lambda_i)$ by \cref{eq:indetli}. Shrinking $W_{i;jk}$ if necessary, we may assume that $\tau_{i;jk}(W_{i;jk})\cap I(\lambda_i)=\emptyset$.

The compositions
\[ \lambda_i\circ\tau_{i;jk}:W_{i;jk}\longrightarrow\pone \]
are morphisms agreeing with $\ol{\lambda}_i$ on the generic orbit-cycle locus. They therefore glue to $\ol{\lambda}_i:X_d\to\pone$ by irreducibility of $X_d$ and separatedness of $\pone$.
\end{proof}
\begin{remark}
The valuative description gives the same value: for a representative $x_v=((P_{1,v},\ldots,P_{2d,v}),F_v)$ of $Z(v)$,
\[ \ol{\lambda}_i(y)=\lambda_{F_v}(P_{i,v}). \]
\end{remark}

\section{Remarks on Unmarked Chow Quotient}
\label{sec:unmarked}
\subsection{Multiplier Maps on Unmarked Chow Quotient}
For the $\pglt$-equivariant surjective map 
\[ \Pi : \pmcrat_d \to \crat_d : \Pi (P_1,\ldots , P_{2d}; [f_+ : f_-] ) \mapsto [f_+ : f_-], \]
the pushforward of Chow cycles induces the morphism between Chow quotients
\[ \Pi_* : X_d :=  \pmcrat_d \sslash_{Ch} \pglt \to Y_d := \crat_d \sslash_{Ch} \pglt . \]

\begin{proposition} \label{prop:vancycle}
    A three-dimensional reduced orbit closure $Z = \ol{G\cdot x}_{\mathrm{red}}$, where $x = (P_1,\ldots , P_{2d}, [f_+, f_-])\in\pmcrat_d$, satisfies $\Pi_* Z = 0$ if and only if
    \begin{enumerate}
        \item $f_+=0$ and $\# \{ P_{d+2}, \ldots , P_{2d} \} < 3$, or
        \item $f_-=0$ and $\# \{ P_{1}, \ldots , P_{d+1} \} < 3$.
    \end{enumerate}
\end{proposition}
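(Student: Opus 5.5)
By the pushforward formula \cref{eq:chowpo}, $\Pi_*Z=0$ holds exactly when $\dim \Pi(Z)<3$. Since $\Pi(Z)=\ol{G\cdot\Pi(x)}$, this is the condition
\[ \dim G\cdot[f_+:f_-]<3 \]
in $\crat_d$. The plan is therefore to characterize the points $[f_+:f_-]\in\crat_d$ whose $\pglt$-stabilizer is positive dimensional, and then to translate this back into conditions on the marked roots $P_i$.

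First case: $f_+f_-\neq0$. The factors $\prod_{i\le d+1}[P_i,z]$ and $\prod_{i\ge d+2}[P_i,z]$ divide $f_+$ and $f_-$, and the degrees match, so $f_+$ and $f_-$ are determined up to scalars by the $P_i$. A positive-dimensional stabilizer contains a one-parameter subgroup $\gmult$ or $\mathbb{G}_a$. Suppose first that it contains $\gmult$. After conjugation it is the diagonal torus, so all roots lie in $\{0,\infty\}$, and $\gmult$ must act on $f_+$ and $f_-$ with the same weight. This is exactly the condition $a_0-b_0-1=0$. In that case $b=\pi_B(x)\in B_A$ with $d(A)=0$, or $b\in B_1$.

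Now $Z$ is a three-dimensional orbit closure. By \Cref{prop:orbdecomp}, such $x$ has $\dim G\cdot x\le 2$ in the case $B_A$ with $d(A)=0$. In the case $B_1$ the orbit is one-dimensional. Both contradict $\dim Z=3$. If the stabilizer contains $\mathbb{G}_a$, all roots coincide, $b\in B_1$, and again $\dim G\cdot x\le 1$. So in this case $\Pi_*Z\neq0$, which matches the statement: neither (1) nor (2) applies when $f_+f_-\neq0$.

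Second case: $f_+=0$. Then $\Pi(x)=[0:f_-]$, and $G$ acts on $\proj(V_{d-1})$. The zero divisor of $f_-$ is $\sum_{i\ge d+2}P_i$, and its orbit is three-dimensional iff the divisor has finite stabilizer iff it has at least three distinct points. The last equivalence holds because a divisor supported on at most two points is fixed by a torus or a $\mathbb{G}_a$, while three distinct points have finite stabilizer. This gives condition (1). The case $f_-=0$ is symmetric, using $f_+$ with roots $P_1,\dots,P_{d+1}$, and gives condition (2).

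The step I expect to need the most care is the $f_+f_-\neq0$ case. There one must check that any positive-dimensional stabilizer of $[f_+:f_-]$ forces positive-dimensional stabilizer of $x$ itself. This holds because $\Pi$ restricted to $\{f_+f_-\neq0\}\cap\pi_B^{-1}(b)$ is determined by $b$ and the aspect ratio, and stabilizing $[f_+:f_-]$ stabilizes the root multisets. Consequently, after passing to a finite-index subgroup, the connected stabilizer fixes each $P_i$, and therefore fixes $x$.
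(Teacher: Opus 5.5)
Your proof is correct and follows essentially the paper's route: reduce to $\dim G\cdot[f_+:f_-]<3$, rule out $f_+f_-\neq0$ by positive-dimensional stabilizers and \Cref{prop:orbdecomp}, and reduce $f_+=0$ or $f_-=0$ to binary forms with at least three distinct roots. Your closing observation settles the case $f_+f_-\neq0$ uniformly: the identity component of $\stab_G[f_+:f_-]$ fixes every marked root, hence fixes $x$ via $X\hookrightarrow B\times\crat_d$. Your version is in fact slightly more complete than the paper's, whose list of orbits of dimension at most two omits the $\mathbb{G}_a$-stabilized orbit $G\cdot[x_0^{d+1}:x_0^{d-1}]$. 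The only slip is that in your $\mathbb{G}_a$ case the correct bound is $\dim G\cdot x\leq 2$, not $\leq 1$ (take all $P_i=\infty$ and $[f_+:f_-]=[x_0^{d+1}:x_0^{d-1}]$), and this weaker bound still gives the contradiction you need.
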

\begin{proof}
    The orbits of dimension at most two in $\crat_d$ are of the form \[  G \cdot [x_0^{d+1} : 0] \text{ or }G \cdot [c_+x_0^{d+1-i}x_1^i : c_-x_0^{d-i}x_1^{i-1}]\  (1 \leq i \leq d),\ (c_+,c_-)\in k^2\setminus\{(0,0)\}. \]
    If $f_+f_- \neq 0$, the inverse image of the orbit is two-dimensional by \Cref{prop:orbdecomp}. Thus we only need to consider the case $f_+ = 0$ or $f_- = 0$, then the assertion is immediate.
    \end{proof}
\begin{lemma} \label{lem:simpleroot}
    For a point $x \in X_d$ and its Chow data $(\Gamma , \sigma , Z)$, let $v_i$ be the internal vertex adjacent to the leaf $p(i)$. We have a bijection between the sets
    \eqns{ & \{ i \in [d+1] \mid \sigma (v_i) \neq + \} \leftrightarrow \\    
    & \left\{ (v, j) \in V(\Gamma ) \times [d+1] : \begin{aligned}
        & \sigma (v) \neq + ,\ \supp Z(v) = \ol{G \cdot (P_{1,v},\ldots , P_{2d,v}; F_v)}_{\mathrm{red}},\\
        & F_v=[f_{+,v}:f_{-,v}],\ \Pi_* Z(v) \neq 0,\ \ord_{P_{j,v}}f_{+,v}<2
    \end{aligned} \right\} \\
    & = \left\{ (v,j) \in V(\Gamma) \times [d+1] :\begin{aligned} &
     \sigma (v) \neq +,\ \Pi_*Z(v) \neq 0,\\ &  \supp Z(v) \setminus I(\lambda_j) \neq \emptyset,\ \lambda_j (Z(v) \setminus I(\lambda_j)) \neq 1 \end{aligned} \right\}. }
    In particular, for the pair $(v,i)$ corresponding to $i$, we have $\ol{\lambda}_i(x) = \lambda_{F_v}(P_{i,v})$.
\end{lemma}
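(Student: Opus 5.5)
The map I would use is $i\mapsto(v_i,i)$. The plan is to show that this lands in the middle set, that every element of the middle set arises this way, and that the middle and right sets coincide. The last identity, $\ol{\lambda}_i(x)=\lambda_{F_v}(P_{i,v})$, then follows from the remark after \Cref{cor:markChowQuotMultiplier}.

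\emph{Forward direction.} Let $i\in[d+1]$ with $\sigma(v_i)\neq+$. As in the proof of \Cref{cor:markChowQuotMultiplier}, $\val(v_i)\geq3$, and the branch of $p(i)$ at $v_i$ contains only the leaf $p(i)$. Since $\sigma(v_i)\neq+$, we have $f_{+,v_i}\neq0$ by \cref{cond:eachcycle}. Hence $P_{i,v_i}$ is a simple root of $f_{+,v_i}$, so $\ord_{P_{i,v_i}}f_{+,v_i}<2$. For $\Pi_*Z(v_i)\neq0$ I apply \Cref{prop:vancycle}.
\begin{itemize}
\item If $\sigma(v_i)=0$, then $f_+f_-\neq0$ and $\Pi_*Z(v_i)\neq0$ directly.
\item If $\sigma(v_i)=-$, then $f_-=0$, and I must show that $\#\{P_{1,v},\ldots,P_{d+1,v}\}\geq3$.
\end{itemize}
For the second case I use the contour slope condition \cref{eq:weightorder}: $h<0$ at $v_i$, and the slope toward the leaf $p(i)$ is positive. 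So at least two further branches at $v_i$ must have positive weight $W$, and in particular each contains a $+$-leaf. Together with $P_{i,v}$ this gives three distinct points among $P_{1,v},\ldots,P_{d+1,v}$. I expect this weight count to be the main obstacle. The plan is to sum the branch weights at $v_i$ (they total $2$) and use the monotonicity constraints from (b) and \cref{eq:weightorder} to rule out two or fewer positive branches.

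\emph{Converse.} Take a pair $(v,j)$ in the middle set. I claim $v=v_j$. If $v\neq v_j$, then $P_{j,v}$ is the direction toward $v_j$, and that direction carries total weight $W\geq$ (the $+$-leaves in it). The condition $\ord_{P_{j,v}}f_{+,v}<2$ forces this direction to contain exactly one $+$-leaf, namely $p(j)$, and possibly some $-$-leaves. The contour constraint along the path from $v$ to $v_j$, using (c) and the fact that $-$-leaves have sign $-$, should then force either $\sigma(v)=+$ or an interior vertex of valence $2$ violating (c). Either way this is a contradiction, so $v=v_j$ and the map is a bijection.

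\emph{Identification of the two descriptions.} For the equality of the middle and right sets, compare \cref{eq:indetli} with the multiplier formula \cref{eq:lFz}. When $f_+\neq0$, the condition $\ord_P f_+\geq2$ with $f_-(P)\neq0$ gives $\lambda=1$. It also gives $\lambda\equiv1$ on the orbit closure, which is $G$-invariant. When $f_+$ vanishes to order at most $1$, the multiplier is non-constant or different from $1$. The case split is on whether $f_-(P)=0$, checking directly in the chart $P=0$.
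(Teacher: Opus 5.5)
\paragraph{The map $i\mapsto(v_i,i)$ fails.}
Your map $i\mapsto(v_i,i)$ is not the right bijection, and both directions of your argument rest on claims that fail.

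The slope of $h$ along a $+$-leaf edge is zero, not positive. For $e=\{v_i,p(i)\}$ with $i\le d+1$, \cref{eq:weightorder} gives $W_e(p(i))=1$ and $W_e(v_i)=2-1=1$; this is the split with $d(A)=0$ in \Cref{prop:orbdecomp}. So $h(v_i)<0$ does not force two further branches of positive weight, and the case $\sigma(v_i)=-$, $\Pi_*Z(v_i)=0$ genuinely occurs.

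Take $d=2$ and the tree $T_2$ of \Cref{sec:examples}, with $P_1^+,P_2^+$ at $v_1$ and $P_3^+,P_4^-$ at $v_2$. Use the constant contour $\sigma\equiv-$, realized by $h(v_2)=-2$, $h(v_1)=-1$; this is the contour $(z_1)$, i.e.\ the row $(r,b)=(0,0)$, there.
\begin{itemize}
\item At $v_3=v_2$ the $+$-leaves occupy only two branches. So $F_{v_2}=[f_+:0]$ with $f_+$ having one simple and one double root, and its orbit is two-dimensional. Hence $\Pi_*Z(v_2)=0$ by \Cref{prop:vancycle}, and $(v_3,3)$ is not in the middle set although $3$ is in the left set.
\item Your converse claim $v=v_j$ fails too. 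The pair $(v_1,3)$ lies in the middle set: $\sigma(v_1)=-$, and $P_{3,v_1}$ (the direction toward $v_2$) is a simple root of the cubic $f_{+,v_1}$, which has three distinct roots. Yet $v_1\neq v_3$, and no contradiction with condition (c) appears because the tree has no valence-two vertices.
\end{itemize}

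\paragraph{The missing idea.}
The bijection is the projection $(v,j)\mapsto j$, as in the paper. Its inverse sends $i$ to $(v_i,i)$ when $\Pi_*Z(v_i)\neq0$. Otherwise it sends $i$ to $(v,i)$, where $v$ is the first vertex with $\Pi_*Z(v)\neq0$ on the path from $v_i$ toward the remaining $+$-leaves, which then all lie in one branch at $v_i$.

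To make this rigorous, show that as long as the $+$-leaves other than $p(i)$ lie in a single forward branch, the slope of $h$ into that branch equals the number of $-$-leaves outside it. This number is at least $1$ because $\val(v_i)\geq3$. Hence $h$ strictly increases along the walk, which gives three things:
\begin{itemize}
\item The walk stops, before reaching any vertex with $\sigma=+$, at a vertex with $\sigma=0$ or at the vertex where the remaining $+$-leaves split.
\item The projection lands in the left set.
\item No other vertex in the region where $p(i)$ is the only $+$-leaf in its direction satisfies the middle-set conditions, which gives injectivity.
\end{itemize}

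Finally, the remark after \Cref{cor:markChowQuotMultiplier} only yields $\ol{\lambda}_i(x)=\lambda_{F_{v_i}}(P_{i,v_i})$, which equals $-d$ here by \cref{eq:lFz}. For the last assertion you must also check $\lambda_{F_v}(P_{i,v})=-d$. This holds because $P_{i,v}$ is a simple root of $f_{+,v}$ and the direction toward $v_i$ contains a branch of $v_i$ with only $-$-leaves, so $f_{-,v}$ vanishes at $P_{i,v}$ (or identically).

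Your identification of the middle and right sets via \cref{eq:indetli} and \cref{eq:lFz} is fine.
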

\begin{proof}
    The map from the right to the left is the projection $(v,j)\mapsto j$. To construct its inverse, fix $i$ and start at the internal vertex $v_i$ adjacent to $p(i)$. By \Cref{prop:vancycle}, $\Pi_*Z(v_i)\neq0$ if $\sigma(v_i)=0$. If $\sigma(v_i)=-$ and $\Pi_*Z(v_i)=0$, then there is a unique edge $e=\{v_i,v'_i\}$ pointing toward the other $+$-marked leaves. Move from $v_i$ to $v'_i$ and continue along this path until reaching the first vertex $v$ such that
    \[ \Pi_*Z(v)\neq0\qquad\text{and}\qquad \supp Z(v)\setminus I(\lambda_i)\neq\emptyset. \]
    This construction gives the unique pair $(v,i)$ corresponding to $i$.
\end{proof}
The fixed point multiplier map
\[ \Lambda :  \crat_{d} \ratmap \fct{Sym}_{d+1} \pone \simeq \proj^{d+1} \]
is defined as
\eqns{ \Lambda :  \Pi(U) & \to \fct{Chow}(\pone , (d+1) \cdot \fct{pt}) && \simeq \proj (k [s_0,s_1]_{d+1}) \\ \Lambda ([f_+,f_-]) & \mapsto \sum_{i = 1}^{d+1}[\lambda_i],\quad \lambda_i=[\lambda_{i,0}:\lambda_{i,1}] && \leftrightarrow \prod_{i = 1}^{d+1} (\lambda_{i,0}s_1-\lambda_{i,1}s_0),}
where $U \subset \pmcrat_d$ is the generic degree orbit locus \cref{eq:gendeglocU}, and the affine value of $\lambda_i$ is $\lambda_{i,1}/\lambda_{i,0}$.
This rational map is $\pglt$-invariant and induces the rational map 
\[ \ol{\Lambda} : \crat_d \chquo \pglt \ratmap \proj^{d+1}. \]
Let
\[ \nu_d:Y_d^\nu\longrightarrow Y_d:=\crat_d\sslash_{Ch}G \]
be the normalization. Pullback by $\nu_d$ gives a rational map
\[ \ol{\Lambda}_d^\nu:=\ol{\Lambda}\circ\nu_d:Y_d^\nu\ratmap\proj^{d+1}. \]
\begin{theorem} \label{thm:multwelldef}
    We have $I(\ol{\Lambda}_d^\nu)=\emptyset$. Hence the fixed point multiplier map extends to a morphism
    \[ \ol{\Lambda}_d^\nu:Y_d^\nu\longrightarrow\proj^{d+1}. \]
\end{theorem}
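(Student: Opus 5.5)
We reduce the statement to the marked Chow quotient, where \Cref{cor:markChowQuotMultiplier} already gives everywhere-regular multiplier morphisms $\ol{\lambda}_i:X_d\to\pone$ for $1\le i\le d+1$. Their product defines a morphism
\[ \ol{\Lambda}_X:=\sum_{i=1}^{d+1}[\ol{\lambda}_i]:X_d\longrightarrow\fct{Sym}_{d+1}\pone\simeq\proj^{d+1}, \]
and on the generic orbit-cycle locus it coincides with $\ol{\Lambda}\circ\Pi_*$. The map $\Pi_*:X_d\to Y_d$ is surjective, being proper with dense image because $\Pi$ is surjective. It is also generically finite, with generic fibre the $S_{d+1}\times S_{d-1}$-orbit of orderings of the roots of $f_\pm$. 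So the plan is to show that $\ol{\Lambda}_X$ is constant on the fibres of $\Pi_*$. Once this holds, \Cref{prop:indetlocus} applied on the normal variety $Y_d^\nu$ gives regularity.

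Fix $y\in Y_d^\nu$ and two valued lifts $y_L,y'_L$ of $y$ with $L$-points in the image of the generic locus. By \Cref{prop:indetlocus}, it suffices to show that $\spe\ol{\Lambda}(y_L)=\spe\ol{\Lambda}(y'_L)$. First lift each of them along $\Pi$ by \Cref{lem:curvefib} to generic points $x_L,x'_L\in U_X(L)$; choosing markings costs only a finite extension. By properness, $\spe(x_L)$ and $\spe(x'_L)$ are points $z,z'\in X_d$ with $\Pi_*z$ and $\Pi_*z'$ both equal to $\nu_d(y)$. It then suffices to show the following: \emph{if $z,z'\in X_d$ satisfy $\Pi_*z=\Pi_*z'$, then $\ol{\Lambda}_X(z)=\ol{\Lambda}_X(z')$.} One caveat is that two branches of $Y_d$ through $\nu_d(y)$ might give different limits. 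Working over $Y_d^\nu$ handles this, since the valued lifts of a point of the normalization determine the branch and make the fibre argument local on the branch.

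To prove the italicized claim, we use \Cref{lem:simpleroot}. The value $\ol{\lambda}_i(z)$ equals $\lambda_{F_v}(P_{i,v})$ for the unique vertex $v$ with $\sigma(v)\neq+$, $\Pi_*Z(v)\neq0$, and $P_{i,v}$ a simple root of $f_{+,v}$. Summing over $i$, the multiset $\ol{\Lambda}_X(z)$ has two parts. The first is, for each component $C=\Pi_*Z(v)\neq0$ of the cycle $\Pi_*z$ with $\sigma(v)\neq+$, the multiset of multipliers of $F_v$ at its simple fixed points. The second is the value $1$, with multiplicity, for all remaining indices: those with $\sigma(v_i)=+$, or with the relevant root non-simple. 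The key point is to check that this remaining count, and the values themselves, depend only on the unmarked cycle $\Pi_*z$. The first part visibly does, since the multipliers at simple fixed points are $G$-invariant functions of a generic point of the reduced component of $\Pi_*z$. For the second part, every non-simple or $+$-contour situation contributes multiplier $1$: at a multiple root of $f_+$ with $f_-(P)\neq0$, the formula \cref{eq:lFz} gives $\lambda=1$, and when $f_+=0$ all multipliers are $1$. The number of such indices is then forced to be $d+1$ minus the number of simple fixed points counted in the first part.

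The main obstacle is this second step: making sure that components which die under $\Pi_*$, and those with $\sigma(v)=+$, contribute exactly the value $1$ with the right multiplicity. In particular, we must check that no limiting multiplier $\infty$, coming from $f_-(P)=0$ at a simple root, gets lost when components are collapsed. I would first try to resolve this using the bijection in \Cref{lem:simpleroot}, together with its third description, in which $\lambda_j\neq1$ on $Z(v)\setminus I(\lambda_j)$. That description shows that every index not matched to a surviving component has multiplier value $1$. Combining it with the total count $d+1$ should yield the claim.
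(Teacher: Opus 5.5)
Your reduction is the paper's own: regularity of the $\ol{\lambda}_i$ on $X_d$ from \Cref{cor:markChowQuotMultiplier}, marked valued lifts specialized in $X_d$, constancy of $\sum_i[\ol{\lambda}_i]$ on the fibres of $\Pi_*$, and then \Cref{prop:indetlocus} on the normal $Y_d^\nu$. Your branch caveat is unnecessary: once constancy holds on the whole fibre of $\Pi_*$, the specialization is independent of the lift, and normality is used only for the converse in \Cref{prop:indetlocus}.

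The gap is in the step you call visible. The coefficient of a reduced orbit closure $C$ in $\Pi_*z$ is not the number of vertices $v$ lying over $C$. Since $Z(v)=\#\stab_G(x_v)\,\ol{G\cdot x_v}_{\mathrm{red}}$, and $\Pi$ restricted to $\ol{G\cdot x_v}_{\mathrm{red}}$ has degree $[\stab_G(F_v):\stab_G(x_v)]$, each vertex contributes $\Pi_*Z(v)=\#\stab_G(F_v)\,C_{F_v}$. Moreover, nothing prevents two vertices with different marked partitions from having conjugate $F_v$. So knowing that the multiplier multiset of a reduced component is an invariant of that component does not tell you how many times to count it. Read literally, ``for each component $C$'' undercounts repeated images.

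The paper supplies exactly this bookkeeping. It sets $\Theta(C_F):=\frac{1}{\#\stab_G[F]}\sum_P([\lambda_F(P)]-[1])$, summed over roots $P$ of $f_+$ with $\min(\ord_Pf_+,\ord_Pf_-+1)<2$, with $\Theta(C_F)=0$ when $f_+=0$. It extends $\Theta$ linearly and uses $\Pi_*Z(v)=\#\stab_G(F_v)\,C_{F_v}$ to obtain $\sum_i[\ol{\lambda}_i(x)]=(d+1)[1]+\Theta(\Pi_*x)$, which depends only on $\Pi_*x$.

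This normalization also disposes of what you took to be the main obstacle. Writing the value as $(d+1)[1]$ plus differences $[\lambda]-[1]$ makes every index with value $1$ contribute zero. This covers all $i$ with $\sigma(v_i)=+$, multiple roots of $f_+$ with $f_-(P)\neq0$, and components with $f_+=0$. So no separate count of the $1$'s is needed. Indices at vertices with $\Pi_*Z(v)=0$ are already reassigned to surviving vertices by \Cref{lem:simpleroot}. Finally, your worry about a multiplier $\infty$ at a simple root with $f_-(P)=0$ is misplaced: there \cref{eq:lFz} gives $\lambda=-d$, which $\Theta$ records like any other value.
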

\begin{proof}

    We first attach a point of $\fct{Sym}_{d+1}\pone$ to every geometric Chow point $y\in Y_d$. Choose a geometric lift $x\in\Pi_*^{-1}(y)$ and put
    \[ \Lambda_{\mathrm{cyc}}(y;x):=\sum_{i=1}^{d+1}[\ol{\lambda}_i(x)]. \]
    Let us take the Chow data $\fct{ChowData}(x) = ((\Gamma ,p , c) , \sigma , Z)$ of $x$.
    We remark that if $\sigma (v_i) = +$ for the vertex $v_i$ adjacent to $p(i)$, then we have $\ol{\lambda}_i(x) = 1$ by $\spe f_+ = 0$ and \cref{eq:lFz}.
    By \Cref{lem:simpleroot}, if we put  
    \eqns{ S := \left\{ (v,j) \in V(\Gamma) \times [d+1] :\begin{aligned} &
     \sigma (v) \neq +,\ \Pi_*Z(v) \neq 0,\\ &  \supp Z(v) \setminus I(\lambda_j) \neq \emptyset,\ \lambda_j (Z(v) \setminus I(\lambda_j)) \neq 1 \end{aligned} \right\}, }
    then  we have 
    \eqns{ \Lambda_{\mathrm{cyc}}(y;x) &= \sum_{ (v,j) \in S }[ \lambda_{F_v} (P_{j,v}) ] + \sum_{ \substack{i\in[d+1] :\\  \sigma (v_i) = +}} [1]  \\
    & = (d+1)\cdot [1] + \sum_{(v,j) \in S}  ( [\lambda_{F_v}(P_{j,v}) ] - [1]) }
    as a cycle of $\fct{Chow} (\pone , (d+1)\cdot \fct{pt})$. For a marked representative $(P_1,\ldots,P_{2d};F)$ with $F=[f_+:f_-]$ and $f_+\neq0$, put
    \[ J_F:=\{j\in[d+1]\mid\lambda_F(P_j)\text{ is defined and }\lambda_F(P_j)\neq1\}. \]
    Then
    \[ \sum_{j\in J_F}\bigl([\lambda_F(P_j)]-[1]\bigr)
    =\sum_{\substack{P:\ f_+(P)=0,\\
    \min(\ord_Pf_+,\ord_Pf_-+1)<2}}
    \bigl([\lambda_F(P)]-[1]\bigr). \]
    Each nonzero term comes from a simple root with a unique $+$-marking; admissible multiple roots have multiplier $1$ and contribute zero. The sum therefore depends only on the conjugacy class of $F$.
    For a three-dimensional reduced orbit closure
    \[ C_F:=\ol{G\cdot[F]}_{\mathrm{red}}\subset\crat_d,\qquad F=[f_+:f_-], \]
    define
    \[ \Theta(C_F):=\frac{1}{\#\stab_G[F]}
    \begin{cases}
    \displaystyle\sum_{\substack{P:\ f_+(P)=0,\\
    \min(\ord_Pf_+,\ord_Pf_-+1)<2}}
    \bigl([\lambda_F(P)]-[1]\bigr)&(f_+\neq0),\\[8pt]
    0&(f_+=0),
    \end{cases} \]
    and extend $\Theta$ linearly to rational cycles generated by such orbit closures, with values in rational zero-cycles on $\pone$. This is independent of the representative $F$ of the orbit. For an internal vertex with $\Pi_*Z(v)\neq0$, write $Z(v)=b_v\ol{G\cdot x_v}_{\mathrm{red}}$, where $b_v=\#\stab_G(x_v)$, and put $a_v=\#\stab_G(F_v)$. The restriction of $\Pi$ to the dense orbits has degree $a_v/b_v$, so
    \[ \Pi_*Z(v)=a_v C_{F_v},\qquad
    y=\sum_{\substack{v\in V(\Gamma)\setminus L(\Gamma)\\\Pi_*Z(v)\neq0}}a_v C_{F_v}. \]
    Counting repeated image orbit closures with multiplicity, the preceding calculation gives
    \[ \Lambda_{\mathrm{cyc}}(y;x)=(d+1)[1]+\Theta(y). \]
    This depends only on $y$ and is an effective integral cycle of degree $d+1$ by the marked expression; denote it by $\Lambda_{\mathrm{cyc}}(y)$.

    Choose a dense open subset $V\subset Y_d^\nu$ in the domain of $\ol{\Lambda}_d^\nu$ lying over the generic orbit-cycle locus and covered by the marked generic orbit-cycle locus. Fix a geometric point $\widetilde y\in Y_d^\nu$, put $y=\nu_d(\widetilde y)$, and let $\widetilde y_L$ be any valued lift with generic point in $V$. After a finite extension, choose a marked lift of its generic point and extend it by properness to a valued point $x_L$ of $X_d$. Put $x=\spe x_L$, so $\Pi_*(x)=y$. By \Cref{cor:markChowQuotMultiplier},
    \[ \spe\ol{\Lambda}_d^\nu(\widetilde y_L)
    =\sum_{i=1}^{d+1}[\ol{\lambda}_i(x)]
    =\Lambda_{\mathrm{cyc}}(y). \]
    The specialization is independent of the lift, so normality of $Y_d^\nu$ and \Cref{prop:indetlocus} give the required morphism.
\end{proof}
\begin{remark}
    The morphism $\ol{\Lambda}_d^\nu$ has constant value $\Lambda_{\mathrm{cyc}}(y)$ on each geometric fiber over $y$. Descent to the possibly nonnormal $Y_d$ still requires scheme-theoretic compatibility along the conductor.
\end{remark}

\subsection{Remark: Isomers}

Although the multiplier map is regular on the normalization by \Cref{thm:multwelldef}, the Chow quotient
\[ \crat_d \sslash_{Ch} G \]
does not completely reflect the structure of trees of spheres considered in (\cite{kiwi2015rescaling}, \cite{fujimura-taniguchi2013rational}, \cite{Rumely17}) on the limits of orbits.
\begin{definition}
    Two cycles $Z , Z' \in \pmcrat_d \sslash_{Ch} G$ are \emph{isomers} if $\Pi_* (Z) = \Pi_* (Z')$ and $(\pi_B)_*(Z) = (\pi_B)_*(Z')$.
\end{definition}
\begin{proposition}
    If $d \geq 3$, then there are distinct isomers.
\end{proposition}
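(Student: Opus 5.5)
The plan is to build two explicit Chow data on $\pmcrat_d$ with the same strata tree (or at least the same boundary configuration in $\ol{M}_{0,2d}$), whose vertex cycles differ but become equal after pushforward. Surjectivity in \Cref{thm:Ydatum} will then realize both data as points of $X_d$, and injectivity of $\fct{ChowData}$ (\Cref{prop:YdatumInj}) will show the two points are distinct. The natural source of such pairs is the relabeling freedom of the marking. Permuting the labels in $[d+1]$ among themselves, or in $[d+2,2d]$ among themselves, commutes with $\Pi$ but changes the leaf-marking. We therefore look for a vertex whose orbit cycle is invariant under $(\pi_B)_*$ and $\Pi_*$ after relabeling, in the sense that the relabeled tree gives the same stable curve. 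The alternative is to let the cycles with $\sigma(v)=\pm$ (which vanish under $\Pi_*$ by \Cref{prop:vancycle}) carry the difference.

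Concretely, take $d\ge3$. I would use a tree with two internal vertices $v,v'$ joined by one edge. The leaves at $v'$ are three $-$-labels $d+2,d+3,d+4$ when $d\ge 5$; otherwise we use a suitable mixed set chosen so that $\sigma(v')=-$ with $f_-=0$ and $\#\{P_1,\dots,P_{d+1}\}$ at $v'$ less than $3$. Then $\Pi_*Z(v')=0$ by \Cref{prop:vancycle}. The configuration of the points at $v'$ (a point of $M_{0,\val(v')}$) is then invisible to $\Pi_*$. It is, however, visible to $(\pi_B)_*$, so varying it alone will not produce isomers.

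The better choice is to exploit the aspect parameter. Take a zero-contour vertex pair: a component of $\sigma^{-1}(0)$ with two stable vertices joined by an edge with $d(A)=0$. By \Cref{cor:stratavariety} this gives one free $\gmult$ factor. Alternatively, take two zero-contour vertices in different components, giving $\alpha(\Gamma,\sigma)\ge 2$ aspect parameters $\rho_1,\rho_2$. Under $(\pi_B)_*$ the aspect constants are forgotten. Under $\Pi_*$ the image depends only on the unordered images $\ol{G\cdot[F_v]}$. Using the $\slt$-equivariant scaling of the $V_{d+1}$ summand (as in \Cref{cor:ShiftConst}), I would choose the data so that the two zero vertices have $F_{v_1}$ and $F_{v_2}$ of the same shape. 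Swapping the aspect constants $(\rho_1,\rho_2)\mapsto(\rho_2,\rho_1)$ then preserves $\Pi_*$, since it permutes the summands $a_vC_{F_v}$, and it preserves $(\pi_B)_*$. It changes the marked Chow datum, however, because the two vertices carry different label sets. The condition $d\ge3$ should be exactly what is needed to fit two stable zero-contour vertices, each with the same multiset of local root orders, into a tree with $d+1$ plus-leaves and $d-1$ minus-leaves. For $d=2$ there are too few leaves.

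The main obstacle is the bookkeeping step. It requires an explicit strata tree satisfying conditions (a)–(c) and the slope rule \cref{eq:weightorder}, with two zero vertices whose unmarked maps $F_{v_1},F_{v_2}$ are conjugate up to the aspect scaling. I would first try $d=3$ with a symmetric tree: a central vertex with $\sigma=-$ or $+$, and two zero vertices each carrying two plus-leaves and one minus-leaf. I would check the weights against the slope formula and verify with \Cref{cor:boundaryorbit} that the edge orbits exist. For larger $d$, I would attach the extra leaves to the central vertex. Distinctness then follows because the swap changes which labels sit at which aspect value, so the resulting Chow data differ.
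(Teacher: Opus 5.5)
\textbf{Gap: the proposal never constructs the required tree, and the tree it suggests fails.} Your mechanism is sound in principle. By \Cref{thm:Ydatum} every Chow datum is a point of $X_d$. Suppose a strata tree has two zero-contour stable vertices $w_1,w_2$ lying in \emph{different} components of the zero subgraph, with the same unmarked shape. Then exchanging their independent aspect parameters preserves $(\pi_B)_*$ and $\Pi_*$ but changes the Chow datum.

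The whole content of the proposition is the existence of such a tree, and you do not produce one. Your symmetric $d=3$ candidate cannot work:
\begin{itemize}
\item Each edge $\{w_i,c\}$ separates the labels into two sets of weight $2-1=1$ and $1$. So $w_i\sim_e c$, and the contour is constant along $w_1$--$c$--$w_2$.
\item Hence $\sigma(c)=0$ is forced. Then $c$ is a valence-two vertex violating condition~(c) of \Cref{def:stratatree}, and after suppressing it $w_1,w_2$ are adjacent zero vertices in one component.
\item The split $\{\{1,2,5\},\{3,4,6\}\}$ has $d(A)=0$. So the common edge orbit ties the two aspect constants together (\Cref{prop:orbdecomp}), giving $\alpha(\Gamma,\sigma)=1$: there is nothing to swap.
\item Attaching balanced extra $\pm$ pairs at $c$ for larger $d$ keeps both weights equal to $1$, so this design fails for every $d$.
\end{itemize}
In general, orient the path from $w_1$ to $w_2$. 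The slope of $h$ on an edge is $1-W$, where $W$ is the leaf-weight on the $w_1$ side. To leave the zero set and return, you need $W<1$ near $w_1$ and $W>1$ near $w_2$. So each zero vertex should carry a balanced $(+,-)$ pair, not a $(+,+,-)$ triple.

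\textbf{The missing example and how it compares with the paper.} Use the paper's tree: an internal path $v_1$--$v_2$--$v_3$--$v_4$ with leaves $\{1,5\}$ at $v_1$, $2$ at $v_2$, $3$ at $v_3$ and $\{4,6\}$ at $v_4$. For $d>3$, attach the remaining $d-3$ pairs of $+$ and $-$ leaves at $v_2$, inserting the required valence-two zero vertices on the new $-$ leaf edges.
\begin{itemize}
\item Then $v_1<v_2\sim v_3>v_4$. With equal lengths on $\{v_1,v_2\}$ and $\{v_3,v_4\}$, the contour $(0,+,+,0)$ is admissible and has $\alpha=2$.
\item Both $v_1$ and $v_4$ carry three-point configurations of the same shape $f_+=\rho(z-P)(z-Q)^d$, $f_-=(z-R)(z-Q)^{d-2}$, with trivial stabilizer.
\item Hence your swap $(\rho_1,\rho_4)\mapsto(\rho_4,\rho_1)$ with $\rho_1\neq\rho_4$ does produce distinct isomers.
\end{itemize}
The paper uses the same tree but compares two different one-parameter strata, with contours $(+,+,+,0)$ and $(0,+,+,+)$. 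For $d=3$ it uses \Cref{prop:vancycle} to kill the $+$ vertices under $\Pi_*$. It also has to match the two inserted valence-two zero vertices on the edges to $P_5$ and $P_6$. Your swap within a single stratum avoids that bookkeeping, since every vertex other than $v_1,v_4$ is untouched. But it only works once the tree above is in hand.
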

\begin{proof}
    Let us take a graph $(\Gamma , p)$ with the following leaf-marking,

\begin{tikzpicture}[
  x=1.15cm,
  y=1.15cm,
  edge/.style={line width=.85pt},
  vertex/.style={circle,fill=black,inner sep=0pt,minimum size=4.5pt},
  every node/.style={font=\small}
]

\coordinate (v1) at (-1, 1);
\coordinate (v2) at ( 1, 1);
\coordinate (v3) at (-1,-1);
\coordinate (v4) at ( 1,-1);

\draw[edge] (-3,1) -- (v1) -- (v2) -- (3,1);
\draw[edge] (v1) -- (v3) -- (-1,-3);
\draw[edge] (v2) -- (v4) -- ( 1,-3);
\draw[edge] (-3,-1) -- (v3);
\draw[edge] (v4) -- (3,-1);

\node[vertex] at (v1) {};
\node[vertex] at (v2) {};
\node[vertex] at (v3) {};
\node[vertex] at (v4) {};

\node[above left=2pt]  at (v1) {$v_2$};
\node[above right=2pt] at (v2) {$v_3$};
\node[above left=2pt]  at (v3) {$v_1$};
\node[above right=2pt] at (v4) {$v_4$};

\node[left=5pt]  at (-3, 1) {$P_2$};
\node[left=5pt]  at (-3,-1) {$P_1$};
\node[right=5pt] at ( 3, 1) {$P_3$};
\node[right=5pt] at ( 3,-1) {$P_4$};
\node[below=5pt] at (-1,-3) {$P_5$};
\node[below=5pt] at ( 1,-3) {$P_6$};

\end{tikzpicture}

so that \eqns{
    & V(\Gamma ) = \{ l_1 ,\ldots , l_6 , v_1 ,\ldots , v_4 \} , \\
    & E(\Gamma ) = \{ \{ l_i , v_i \}\ (i = 1,2,3,4), \{ l_5, v_1 \}, \{ l_6, v_4 \}, \{ v_j,v_{j+1} \}\ (j = 1,2,3)  \}.
}
Then we have the edge-wise relation $v_1 < v_2 \sim v_3 > v_4$. The contours $ (\sigma (v_1) ,\ldots , \sigma (v_4)) = (+,+,+,0)$ and $(0,+,+,+)$ each give a strata tree for $d = 3$ after inserting zero-sign valence-two vertices wherever the contour crosses zero; these vertices and their orbit cycles are understood to be included. Each corresponding moduli space is $\gmult$, and corresponding points give isomers. For $d > 3$, after renumbering, we may add the remaining $(d-3)$ pairs of $+$ and $-$ leaves at $v_2$, again inserting the required zero-sign valence-two vertices.
\end{proof}
\subsection{Degrees of Orbits}
We can compute both the numerical degree of the orbit cycle $\ol{G \cdot x}$ and that of its reduced support $\ol{G\cdot x}_{\mathrm{red}}$ for a given three-dimensional orbit. A special case of the calculation is the following.
\begin{proposition}
    Let $[f] \in \proj (V_n)$ be a point whose stabilizer is finite. Then the orbit cycle $Z:=\ol{G\cdot[f]}$ satisfies
    \eqnl{ \deg Z=( \# \stab_G([f]) )\deg Z_{\mathrm{red}} = n(n-1)(n-2) - \sum_{P} \delta_n ( \ord_P f ), \label{eq:VnOrbdeg}}
    where $\delta_n (m ) = m(m-1)(3n-2-2m)$.
\end{proposition}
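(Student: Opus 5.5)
The plan is to copy the marking strategy of \Cref{lem:IntersecNum}: lift $[f]$ to an ordered configuration of its roots, compute the degree on $B_n:=(\pone)^n$, and push forward.

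\textbf{Setup.} Consider the $G$-equivariant multiplication morphism
\[ m:B_n\longrightarrow\proj(V_n),\qquad ([\alpha_{i,0}:\alpha_{i,1}])_i\longmapsto\Bigl[\prod_{i=1}^n(\alpha_{i,1}x_0-\alpha_{i,0}x_1)\Bigr]. \]
It is surjective and finite, and $m^*\oshf(1)=H_1+\cdots+H_n$. Choose $b=(P_1,\ldots,P_n)$ listing the roots of $f$ with multiplicity, so that $m(b)=[f]$. Since $\stab_G[f]$ is finite, $f$ has at least three distinct roots. Hence $\dim G\cdot b=3$ and $\stab_G(b)$ is finite; it is in fact trivial, as it fixes three distinct points.

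\textbf{Step 1: pushforward of the orbit cycle.} I will show
\[ m_*\ol{G\cdot b}=\ol{G\cdot[f]}=Z. \]
The morphism $m$ restricts to $G\cdot b\to G\cdot[f]$. Its fiber over $[f]$ is $\stab_G[f]\cdot b$, which has $\#\stab_G[f]/\#\stab_G(b)$ points. So $m|_{\ol{G\cdot b}_{\mathrm{red}}}$ has degree $\#\stab_G[f]/\#\stab_G(b)$ onto $Z_{\mathrm{red}}$. Multiplying by the coefficient $\#\stab_G(b)$ of the orbit cycle \cref{eq:orbitcycle} and applying \cref{eq:chowpo} gives $m_*\ol{G\cdot b}=\#\stab_G[f]\cdot Z_{\mathrm{red}}=Z$. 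This stabilizer bookkeeping is the step I expect to need the most care; everything after it is formal.

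\textbf{Step 2: projection formula.} By Step 1,
\[ \deg Z=\bigl(H_1+\cdots+H_n\bigr)^3\cdot\ol{G\cdot b}. \]
Because $H_i^2=0$, the cube expands as $6\sum_{i<j<k}H_iH_jH_k$. By the $3$-transitivity argument of \cref{eq:hihjhk}, each term $H_iH_jH_k$ meets $\ol{G\cdot b}$ in degree $1$ if $P_i,P_j,P_k$ are distinct and $0$ otherwise; the orbit-cycle multiplicity is already built in, as in \Cref{lem:IntersecNum}. Therefore $\deg Z$ equals the number of ordered triples of distinct labels $(i,j,k)$ whose points $P_i,P_j,P_k$ are pairwise distinct.

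\textbf{Step 3: counting.} Write $m_P=\ord_Pf$. Start from all $n(n-1)(n-2)$ ordered triples of distinct labels and subtract the bad ones, grouped by the point $P$ where a collision occurs.
\begin{itemize}
\item Triples with exactly two labels at $P$ and the third label elsewhere number $3m_P(m_P-1)(n-m_P)$.
\item Triples with all three labels at $P$ number $m_P(m_P-1)(m_P-2)$.
\end{itemize}
Each bad triple is counted once, at the unique point where the collision happens. For each $P$ these two contributions sum to
\[ m_P(m_P-1)(3n-2-2m_P)=\delta_n(m_P). \]
This gives \cref{eq:VnOrbdeg}, and the first equality there is the definition \cref{eq:orbitcycle} of the orbit cycle.
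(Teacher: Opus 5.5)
Your proposal is correct and follows the paper's proof essentially step for step. Both use the multiplication map $(\pone)^n\to\proj(V_n)$, the pushforward identity $m_*\ol{G\cdot b}=Z$, the projection formula with $m^*\oshf(1)=\sum_i H_i$, and the $H_iH_jH_k$ intersection numbers. There are two minor differences. The paper evaluates the triple count as $6e_3(\ol a)=p_1^3-3p_1p_2+2p_3$, whereas you count the bad triples by their collision point, which is an equivalent bookkeeping. You also justify the stabilizer count behind $m_*\ol{G\cdot b}=Z$, which the paper only asserts.
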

\begin{proof}
    Let $\pi : ( \pone )^n \to \proj (V_n)$ be the morphism such that 
    \[ \pi ([\alpha_{i,0} : \alpha_{i,1}]) = \prod_i ( \alpha_{i,0}x_1 - \alpha_{i,1}x_0) . \]
    Then the morphism $\pi$ is $\slt$-equivariant under the component-wise action on $\pone$. Fix a point $P \in \pi^{-1}([f])$. Then for the orbit cycle $Z' := \ol{G\cdot P}$, $\dim Z' = \dim Z$ and $\pi_* Z' = Z$.
    Write $\{Q_1,\ldots,Q_m\}=\{P_1,\ldots,P_n\}$ for the distinct coordinates of $P$, and put
    \[ a_\ell:=\#\{j\in[n]\mid P_j=Q_\ell\}=\ord_{Q_\ell}f,
    \qquad \ol{a}:=(a_1,\ldots,a_m). \]
    
    Let $H_i$ be the $i$-th coordinate hyperplane divisor of $(\pone )^n$. 
    Then $A^* ( (\pone )^n ) \simeq \zahl [H_1, \ldots , H_n ] / (H_1^2,\ldots , H_n^2 )$ and 
    \[ \eta = \pi^* \oshf_{\proj (V_n)}(1) = \sum_{i = 1}^n H_i. \]
    For the coordinates $ P = (P_1,\ldots , P_n )$, the intersection is computed as 
    \[ H_iH_jH_k \cap Z' = \begin{cases} 1 & ( \# \{ P_i , P_j , P_k \} = 3 ), \\ 0 & ( \# \{ P_i , P_j , P_k \} < 3 ). \end{cases} \]
    For any vector $v = (v_1 ,\ldots , v_m)$, we introduce the notation of power sums and elementary symmetric polynomials as 
\eqnl{ p_k(v) := \sum_{i} v_i^k \text{ and } e_k(v) := \sum_{\substack{ I \subset \{ 1,\ldots , m\}, \\
\# I = k }} \prod_{i \in I} v_i. \label{eq:pses}} Here we remark that 
\[ e_3(v) = \frac{1}{6}(p_1^3(v) - 3 p_1p_2(v) + 2p_3(v)) \]
for any vector $v$.
    \eqns{
     \deg Z & = (c_1(\oshf (1) ) )^3 \cap \pi_* Z' = \pi_* ( \eta^3 \cap Z') \\
     & = \sum_{\# \{ i,j,k \} = 3} a_ia_ja_k \\
     & = 6 e_3( \ol{a}) = p_1(\ol{a})^3 - 3p_1(\ol{a}) p_2(\ol{a}) + 2p_3(\ol{a}) \\
     & = n^3 - \sum_{i} (3na_i^2 - 2a_i^3) \\
     & = n(n-1)(n-2) - \sum_{i} a_i(a_i-1)(3n-2-2a_i).
    }
    This shows the assertion.
\end{proof}
We recall that for $X = \pmcrat_d$, the Chow ring can be written as
\eqns{
A^*(X) 
& = \zahl [H_1,\ldots , H_{2d}, \xi] / (H_i^2 , (\xi - D_+) (\xi - D_-)).
}
So we have 
\eqnsl{ \xi^3 & = \xi \left( \xi (D_+ + D_-) - D_+D_- \right) \nonumber \\ 
& = (D_+ + D_-) ( (D_+ + D_- ) \xi - D_+D_-) - D_+D_- \xi \nonumber \\
& = (D_+^2 + D_+D_- + D_-^2 )\xi - ( D_+^2D_- + D_+ D_-^2 ). \label{eq:xitoD}}

\begin{theorem} \label{thm:degorbit}
    Let $\fct{CG}(F) = (f_+ , f_-)$ be a Clebsch--Gordan form of $F$ and assume $\dim G\cdot[F]=3$, or equivalently that $\stab_G([F])$ is finite. Under the convention $\ord_P0=+\infty$, we have
    \eqnl{ \deg \ol{G\cdot[F]} = \# \stab_G([F]) \cdot \deg \ol{G\cdot[F]}_{\mathrm{red}} = (d+1)d(d-1) - \sum_{P \in \pone} \delta_{d+1} ( c_P), \label{eq:RatdOrbDeg} }
    where $c_P = \min ( \ord_P f_+ , \ord_P f_- + 1)$ and $\delta_{d+1} (c) = c(c-1)(3d+1-2c)$.
\end{theorem}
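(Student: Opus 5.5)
The plan is to reduce to the marked space $X=\pmcrat_d$ and compute $\xi^3$ on a marked orbit cycle, in the same way \cref{eq:VnOrbdeg} was obtained from $(\pone)^n$. Given $[F]$ with $\fct{CG}(F)=(f_+,f_-)$, I would choose a lift $x=(P_1,\ldots,P_{2d},[f_+:f_-])\in X$ with $\Pi(x)=[F]$:
\begin{itemize}
\item if $f_+\neq0$, take $P_1,\ldots,P_{d+1}$ to be the roots of $f_+$ with multiplicity;
\item if $f_+=0$, take $P_1,\ldots,P_{d+1}$ to be distinct general points;
\item treat $P_{d+2},\ldots,P_{2d}$ and $f_-$ in the same way.
\end{itemize}
Since $\Pi$ is $G$-equivariant with finite fibres, $\dim G\cdot x=3$. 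The restriction $\ol{G\cdot x}_{\mathrm{red}}\to\ol{G\cdot[F]}_{\mathrm{red}}$ is generically finite of degree $\#\stab_G[F]/\#\stab_G(x)$. Hence, by \cref{eq:chowpo}, $\Pi_*\ol{G\cdot x}=\ol{G\cdot[F]}$ as orbit cycles. Because $\xi=\Pi^*\oshf(1)$, the projection formula gives
\[ \deg\ol{G\cdot[F]}=\xi^3\cap\ol{G\cdot x}. \]

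Next, rewrite $\xi^3$ using \cref{eq:xitoD} and expand in monomials with $H_i^2=0$. Then $D_+^2+D_+D_-+D_-^2$ gives coefficient $2$ to $H_iH_j$ when $i,j$ have the same sign and coefficient $1$ when they have mixed signs. Likewise $D_+^2D_-+D_+D_-^2$ gives coefficient $2$ to each triple $H_iH_jH_k$ of mixed sign. Each monomial is then evaluated by \Cref{lem:IntersecNum}:
\[ \xi H_iH_j\mapsto d+1-c_{P_i}-c_{P_j}\ (P_i\neq P_j),\qquad H_iH_jH_k\mapsto 1 \text{ exactly when } P_i,P_j,P_k \text{ are distinct.} \]
By construction $c_Q(x)=\min(\ord_Q f_+,\ord_Q f_-+1)=c_Q$, with $\ord_Q0=+\infty$; this is where the convention in the statement enters. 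As a sanity check, in the generic case ($a_Q,b_Q\le1$, $c_Q=a_Q$) the $\xi$-part is $(d^2-1)(3d-2)$ and the cubic part is $2(d+1)(d-1)^2$, so the difference is $d^3-d$, as it should be.

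For the general case I would group the indices by the distinct points $Q$, with multiplicities $a_Q,b_Q$ and the value $c_Q$. Counting pairs and triples of indices lying over distinct points turns both sums into polynomial expressions in the power sums
\[ \sum_Q a_Q^r,\qquad \sum_Q b_Q^r,\qquad \sum_Q a_Q^{r}b_Q^{s},\qquad \sum_Q c_Q(a_Q+b_Q),\ \ldots \]
The strategy is to write each count as the total over all indices (giving the generic value $d^3-d$) minus correction terms supported at single points $Q$. Using $\sum_Q a_Q=d+1$ and $\sum_Q b_Q=d-1$, one then shows that the correction at $Q$ equals $c_Q(c_Q-1)(3d+1-2c_Q)$.

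The main obstacle is this last local identity. The correction at $Q$ is a priori a function of $(a_Q,b_Q)$, and it must be shown to depend only on $c_Q=\min(a_Q,b_Q+1)$, which is a genuinely piecewise quantity. I would handle it by cases: $a_Q\le b_Q+1$, $a_Q> b_Q+1$, and the degenerate cases $f_+=0$ or $f_-=0$. The cross terms such as $\xi H_iH_j$ with $P_i=Q$ feed $c_Q$ back into the sum, and I expect the dependence on $\max(a_Q,b_Q+1)$ to cancel between the $\xi$-term and the $H^3$-term. I would check this first in small cases, e.g. a single multiple $+$-root or a single hole of depth one, before doing the general bookkeeping.
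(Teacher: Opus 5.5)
Your plan is essentially the paper's proof: lift to $\pmcrat_d$, expand $\xi^3$ by \cref{eq:xitoD}, evaluate with \Cref{lem:IntersecNum}, and reduce to a local identity. That identity is in fact not genuinely piecewise: writing the paper's local term as $q(a,b,c)=2ab(a+b)+2a^2-(3d-1)ab+c\bigl((3d+1)a+(3d-1)b-2(a^2+ab+b^2)\bigr)$, one has $q(a,b,c)-(3d+1)a=c(c-1)(3d+1-2c)$ identically in $b$ when $c=a$, and identically in $a$ when $c=b+1$. The paper handles $f_+=0$ and $f_-=0$ via \cref{eq:VnOrbdeg} instead of your general-point lift, but your lift also works by the identity just stated; and ``finite fibres'' is false when a summand vanishes, although $\dim G\cdot x=3$ still follows from $\stab_G(x)\subseteq\stab_G([F])$.
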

\begin{proof}
If $f_-=0$, then $c_P=\ord_Pf_+$, and the assertion is exactly \cref{eq:VnOrbdeg} with $n=d+1$. If $f_+=0$, write $b_P=\ord_Pf_-$. Then $c_P=b_P+1$, and \cref{eq:VnOrbdeg} with $n=d-1$ gives the assertion because
\[ \delta_{d+1}(b+1)-\delta_{d-1}(b)=6(d-1)b,
   \qquad \sum_Pb_P=d-1, \]
while
\[ (d^3-d)-(d-1)(d-2)(d-3)=6(d-1)^2. \]
We may therefore assume $f_+f_-\neq0$ for the remainder of the proof.

Let $Z_F:=\ol{G\cdot[F]}$ be the orbit cycle and let $x = (P_1,\ldots , P_{2d}, [f_+,f_-])$ be a point in a fiber of the morphism $\Pi : X \to \ol{\mrat}_d$. Its orbit cycle is
\[ \ol{G\cdot x}=(\#\stab_G(x))\ol{G\cdot x}_{\mathrm{red}}. \]
The equivariant map $\Pi$ has degree $[\stab_G([F]):\stab_G(x)]$ on the orbit. Therefore
\[ \Pi_*\ol{G\cdot x}=Z_F,\qquad \xi^3|_{\ol{G\cdot x}}=\deg Z_F. \]
For the set $S := \{ P_1 ,\ldots , P_{2d} \}$, we write $S$ as $\{ Q_1, \ldots , Q_m \}\ \ ( m := \# S)$ and put
\eqns{ a_i & := \# \{ j \mid P_j = Q_i ,\ 1 \leq j \leq d+1\},\\
b_i & := \# \{ j \mid P_j = Q_i ,\ d+2 \leq j \leq 2d\},\\
c_i & := \min (a_i, b_i+1) }
for $1\leq i\leq m$.
By \Cref{eq:xitoD} and \Cref{lem:IntersecNum}, we have
\eqns{
\xi^3|_{\ol{G\cdot x}}
= & \sum_{i \neq j} ((d+1)-c_i-c_j)(a_ia_j + a_ib_j + b_ib_j)  - \sum_{\# \{ i,j,k \} = 3}(a_ia_jb_k + a_ib_jb_k).
}
The first term on the RHS is given by
\eqns{& \sum_{i \neq j} ((d+1)-c_i-c_j)(a_ia_j + a_ib_j + b_ib_j) \\ 
& = \sum_{i,j} ((d+1)-c_i-c_j)(a_ia_j + \frac{1}{2}a_ib_j + \frac{1}{2}a_jb_i + b_ib_j) \\
& \quad - \sum_i ((d+1)-2c_i)(a_i^2 + a_ib_i + b_i^2) \\
& = (d+1)( (d+1)^2 + (d+1)(d-1) + (d-1)^2) \\
& \ \ - 2 \sum_{i} c_i(a_i(d+1) + a_i\frac{1}{2}(d-1) + b_i\frac{1}{2}(d+1) + b_i(d-1)) \\
& \ \ - \sum_i ((d+1)-2c_i)(a_i^2 + a_ib_i + b_i^2) \\
& = (d+1)(3d^2+1) -  \sum_{ i} c_i((3d+1)a_i + (3d-1) b_i) \\
& \quad - \sum_i ((d+1)-2c_i)(a_i^2 + a_ib_i + b_i^2).
}
In the following computation of the remaining term, we use the notation in \cref{eq:pses} and $\oplus$ as just the concatenation for numerical vectors.
\eqns{
& \sum_{\# \{ i,j,k \} = 3} (a_ia_jb_k + a_ib_jb_k) \\ 
& = 2\left( e_3(\ol{a}\oplus \ol{b}) - e_3(\ol{a}) - e_3(\ol{b}) - \sum_{i \neq j} (a_ia_jb_i + a_ib_ib_j) \right)  \\
& = \frac{1}{3} \Bigl( (p_1^3(\ol{a} \oplus \ol{b}) - 3p_1p_2(\ol{a}\oplus \ol{b}) + 2p_3(\ol{a} \oplus \ol{b})) \\
& \qquad - (p_1^3(\ol{a}) - 3p_1p_2(\ol{a}) + 2p_3(\ol{a})) \\
& \qquad - (p_1^3(\ol{b}) - 3p_1p_2(\ol{b}) + 2p_3(\ol{b})) \Bigr) \\
& \quad - 2\sum_{i,j} (a_ia_jb_i + a_ib_ib_j) + 2\sum_i ( a_i^2b_i + a_ib_i^2) \\
& = \frac{1}{3}\left( (2d)^3 - 6dp_2(\ol{a} \oplus \ol{b}) - ((d+1)^3 - 3(d+1)p_2(\ol{a})) - ((d-1)^3 - 3(d-1)p_2(\ol{b})) \right) \\ 
& \ \ \ \ - 4d \sum_{i} a_ib_i + 2 \sum_i ( a_i^2b_i + a_ib_i^2) \\
& = \frac{1}{3}\left( 6d^3-6d - (3d-3)p_2(\ol{a}) - (3d+3)p_2(\ol{b}) \right) \\
& \quad - 4d \sum_{i} a_ib_i + 2 \sum_i ( a_i^2b_i + a_ib_i^2) \\
& = 2(d^3-d) - (d-1)p_2(\ol{a}) - (d+1)p_2(\ol{b}) - 4d \sum_{i} a_ib_i + 2 \sum_i ( a_i^2b_i + a_ib_i^2).
}
Combining these results, we obtain
\eqns{\xi^3|_{\ol{G\cdot x}} = & (d+1)(3d^2+1) - 2(d+1)(d^2-d) - \sum_{i} c_i((3d+1)a_i + (3d-1) b_i)  \\
& - \sum_i ((d+1)-2c_i)(a_i^2 + a_ib_i + b_i^2) + (d-1)p_2(\ol{a}) + (d+1)p_2(\ol{b}) \\
& + 4d \sum_{i} a_ib_i - 2 \sum_i ( a_i^2b_i + a_ib_i^2) \\
= & (d+1)^3 - \sum_{i} c_i((3d+1)a_i + (3d-1) b_i) + 2\sum_i c_i(a_i^2 + a_ib_i + b_i^2) \\
& + \sum_i \Bigl(-2a_i^2 + (3d-1)a_ib_i - 2(a_i^2b_i + a_ib_i^2)\Bigr). }
Therefore, we have
\[ \deg Z_F = (d+1)^3 - \sum_i q_d(a_i,b_i), \]
where 
    \eqns{ q_d(a,b) & = 2ab(a+b) + 2a^2 - (3d-1)ab \\ 
    & \ \ + \min (a, b+1) \left( (3d+1) a + (3d-1)b  - 2(a^2 + ab + b^2)\right).}
Here we have 
    \eqns{ 
    q_d(a,b) - (3d+1)a  & = \begin{cases}
        a(a-1)(3d+1-2a) & (a \leq b+1) \\
        b(b+1)(3d-1-2b) & (a \geq b+1)
    \end{cases} \\
    & = \delta_{d+1}(\min(a,b+1)).
    }
Then the assertion follows from $\sum_i a_i = d+1$.
\end{proof}
\begin{corollary} \label{cor:autfreeChow}
    The orbit-cycle map extends to an injective morphism
    \[ \iota_d:\fct{rat}_d=\mrat_d/G\longrightarrow Y_d,
    \qquad [F]\longmapsto\ol{G\cdot[F]}
    =\bigl(\#\fct{Aut}(F)\bigr)\ol{G\cdot[F]}_{\mathrm{red}}. \]
    For every $F\in\mrat_d$, the orbit cycle has total degree
    \[ \deg\ol{G\cdot[F]}=d^3-d, \]
    whereas its reduced support has degree $(d^3-d)/\#\fct{Aut}(F)$.
\end{corollary}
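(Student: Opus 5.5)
The plan is to read the corollary off \Cref{thm:degorbit} for points of $\mrat_d$, and then get the morphism from the Chow scheme by a flat-family argument.

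\emph{Degree and well-definedness.} For $F\in\mrat_d$ the divisor $F$ is irreducible, so $F^{\mathrm{main}}=F$ and $\fct{depth}_P[F]=0$ for every $P$. Hence by \Cref{cor:OrdDepth} $c_P\in\{0,1\}$. In either case $\delta_{d+1}(c_P)=0$, since the factor $c(c-1)$ vanishes. The stabilizer of $[F]$ is $\fct{Aut}(F)$, which is finite for $d\geq2$, so $\dim G\cdot[F]=3$ and \Cref{thm:degorbit} applies. It gives $\deg\ol{G\cdot[F]}=d^3-d$ and $\deg\ol{G\cdot[F]}_{\mathrm{red}}=(d^3-d)/\#\fct{Aut}(F)$. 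This also fixes which Chow component the orbit cycle lies in.

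\emph{Constancy of the class.} The class of $\ol{G\cdot[F]}$ in $A^{2d-2}(\proj^{2d+1})$ is just its degree times a linear class. So every orbit cycle of a point of $\mrat_d$, multiplicity included, has the generic class $\delta(G,\crat_d)$.

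\emph{Morphism and image.} Let $\Gamma\subset G\times\mrat_d\times\crat_d$ be the graph $\{(g,F,g\cdot F)\}$, and let $\ol\Gamma\subset\ol G\times\mrat_d\times\crat_d$ be its closure for a projective compactification $\ol G$. Push it forward along $\ol G\times\mrat_d\times\crat_d\to\mrat_d\times\crat_d$. This gives a family of $3$-cycles over $\mrat_d$. Its fiber over $F$ is the image of $\ol G$ under the orbit map; that map has degree $\#\stab$, so the fiber is the orbit cycle. Since all fibers have the same degree and $\mrat_d$ is smooth, hence normal, Kollár's criterion for well-defined families of cycles over a normal base gives a morphism $\mrat_d\to\fct{Chow}(\crat_d,\delta)$. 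It extends the orbit-cycle morphism defined on the automorphism-free locus. That locus is dense, so the image lies in its closure $Y_d$. The morphism is $G$-invariant, and $\mrat_d\to\fct{rat}_d$ is a categorical (GIT) quotient, so it descends to $\iota_d$.

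\emph{Injectivity.} The support of $\iota_d([F])$ is $\ol{G\cdot[F]}_{\mathrm{red}}$. Its intersection with the open subset $\mrat_d$ is exactly the orbit $G\cdot[F]$, because $\mrat_d$ is $G$-stable and every boundary point of the orbit closure has a hole. Therefore $\iota_d([F])$ recovers $[F]$.

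\emph{Main obstacle.} The delicate step is that the family is well defined at points with nontrivial automorphisms. The multiplicity $\#\stab$ is exactly what degree-constancy requires, and normality of $\mrat_d$ is what lets one invoke Kollár's theorem.
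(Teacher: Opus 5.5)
Your degree computation, the descent through the geometric quotient, and the injectivity argument are fine. The step that carries all the content, however, is only asserted: that the fiber of your family over $[F]$ is the image of $\ol{G}$ under $g\mapsto g\cdot[F]$.

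The fiber of $\ol{\Gamma}$ over $[F]$ contains the closure of the graph of $g\mapsto g\cdot[F]$, but it can be strictly larger. Along a valuation curve $F_t\to F$, the flat limit of the graphs $\{(g,g\cdot F_t)\}\subset\ol{G}\times\crat_d$ consists of two parts:
\begin{itemize}
\item that graph closure, with multiplicity one, because over $G$ the family is the graph of a morphism;
\item possibly further components lying over $\ol{G}\setminus G$, which may push forward to nonzero $3$-cycles supported in $\crat_d\setminus\mrat_d$.
\end{itemize}
Nothing in your argument excludes the second kind. In fact the same reasoning gives a false conclusion at a point of $\crat_d$ with finite stabilizer and a hole at a fixed point of its main component. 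There $\sum_P\delta_{d+1}(c_P)>0$, so the orbit cycle has degree less than $d^3-d$. The limit of generic orbit cycles still has degree $d^3-d$, so it must contain extra components.

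Without excluding these components you have neither of the two things you need. You lack the equidimensionality of the pushed-forward family, which is what Kollár's criterion actually requires; constancy of degree is a consequence of having a well-defined family, not a hypothesis you can supply. You also lack the identification of its cycle-theoretic fiber with $\ol{G\cdot[F]}$.

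The repair uses ingredients you already have, in a different order. The multiplicity-one main component gives, for every limit cycle $Z$ at $[F]$, the inequality $Z\geq(\#\stab_G[F])\,\ol{G\cdot[F]}_{\mathrm{red}}$; this is \Cref{prop:stabilizermultiplicity}. Specialization preserves degree, so $\deg Z=d^3-d$. By \Cref{thm:degorbit} the right-hand side already has degree $d^3-d$. Hence $Z=\ol{G\cdot[F]}$, every set-theoretic fiber is the orbit closure, and your Kollár argument then goes through.

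The paper runs exactly this inequality-plus-degree argument, but on the closure of the graph of the orbit-cycle map in $\mrat_d\times Y_d$. Its projection to $\mrat_d$ then has singleton fibers. It is therefore finite and birational onto the normal variety $\mrat_d$, hence an isomorphism. This avoids the theory of cycle families over normal bases altogether.
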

\begin{proof}
    A degree-$d\geq2$ rational map has finite automorphism group. By \Cref{cor:OrdDepth}, each $c_P$ is $0$ or $1$ on $\mrat_d$, and hence $\delta_{d+1}(c_P)=0$. The degree assertions follow from \Cref{thm:degorbit}.

    Let $U\subset\mrat_d$ be the locus of maps with trivial automorphism group. The orbit-cycle morphism defining the Chow quotient restricts to a morphism
    \[ \chi_U:U\longrightarrow Y_d. \]
    Let $\Gamma_\chi\subset\mrat_d\times Y_d$ be the closure of its graph and let
    \[ p:\Gamma_\chi\longrightarrow\mrat_d \]
    be the first projection. Closedness of the universal Chow incidence gives $[F]\in\supp Z$ for $([F],Z)\in\Gamma_\chi$, so \Cref{prop:stabilizermultiplicity} yields
    \[ \ol{G\cdot[F]}\leq Z. \]
    Equality of degrees, $\deg \ol{G \cdot [F]}=d^3-d=\deg Z$ by \Cref{thm:degorbit}, gives
    \[ Z=\ol{G\cdot[F]}. \]

    Thus $p$ is finite and birational, hence an isomorphism since $\mrat_d$ is open in $\proj^{2d+1}$ and therefore normal. This extends $\chi_U$ to a $G$-invariant morphism
    \[ \chi:\mrat_d\longrightarrow Y_d,
    \qquad [F]\longmapsto\ol{G\cdot[F]}. \]
    By the universal property of the geometric quotient $\mrat_d\to\fct{rat}_d$ constructed in \cite{silverman1996p1moduli}, this morphism descends to $\iota_d$.

    Injectivity follows because each orbit closure has a unique dense orbit.
\end{proof}
\begin{remark}
    The degrees of generic orbits are given by Kazarnovski\u{i}'s theorem (\cite{Kazar1987formula}; see also \cite{KaverKhovanskii2010KazarThmbyOkounkovBody}). A more unified method for computing orbit degrees in $\proj(V)$ for an arbitrary $\slt$-representation is given in \cite{Deopurkar2026EquivClassOrbCloGL2}.
\end{remark}
\begin{example}
    For the power map $P_d(z)=z^d$, one has $\#\fct{Aut}(P_d)=2(d-1)$. Since there are no holes, all correction terms in \cref{eq:RatdOrbDeg} vanish, and
    \[ \deg\ol{G\cdot[P_d]}_{\mathrm{red}}=\frac{d^3-d}{2(d-1)}=\frac{d(d+1)}2,
    \qquad \deg\ol{G\cdot[P_d]}=d^3-d. \]
    Thus the reduced orbit-closure degrees are $3$ for $d=2$ and $6$ for $d=3$, while the orbit-cycle degrees are $6$ and $24$, respectively.
    
    As a collision check for $d=3$, take the multiplicity pattern $(a_1,b_1)=(2,1)$ and let all remaining roots be distinct and disjoint. Then $c_1=2$, all other $c_i$ are at most $1$, and
    \[ (d^3-d)-\delta_4(2)=24-12=12. \]
    For a generic configuration with this pattern the stabilizer is trivial, so both the reduced orbit-closure degree and the orbit-cycle degree are $12$, in agreement with the expanded formula for $q_3(a_i,b_i)$.
\end{example}

\subsection{Well-Definedness of the Iteration Rational Map}

By \Cref{thm:degorbit}, iteration induces rational maps between the Chow quotient moduli spaces of dynamical systems on the projective line.
\begin{corollary} \label{cor:iterdeg}
    For $n\geq2$, we have
    \[ \delta (G , \Phi_{n} (\ol{\mrat}_d) ) = \delta (G, \ol{\mrat}_{d^n} ). \]
    In particular, the rational map $\Psi_n : Y_d \ratmap Y_{d^n}$ is well-defined.
\end{corollary}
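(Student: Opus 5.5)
Since $\Phi_n$ is $G$-equivariant and $\delta(G,\Phi_n(\ol{\mrat}_d))$ is by definition the algebraic-equivalence class of the orbit cycle of a generic point of the image, the plan is to compare degrees. Both $\ol{\mrat}_d$ and $\ol{\mrat}_{d^n}$ are projective spaces, and on $\proj^N$ an effective $3$-cycle class is determined by its degree. So it suffices to find a dense subset of $\Phi_n(\ol{\mrat}_d)$, namely $\Phi_n(\mrat_d)$, on which every point $[F]=\Phi_n(\varphi)$ has finite stabilizer and orbit-cycle degree equal to the generic orbit-cycle degree of $\ol{\mrat}_{d^n}$.

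\textbf{Step 1: degrees.} For $\varphi\in\mrat_d$ the iterate $\varphi^n$ lies in $\mrat_{d^n}$, since it has exact degree $d^n$ and no common factor. By \Cref{cor:autfreeChow} applied with $d^n$ in place of $d$, its orbit cycle has degree $(d^n)^3-d^n$, and $\stab_G(\varphi^n)=\fct{Aut}(\varphi^n)$ is finite because $d^n\geq2$. A generic point of $\ol{\mrat}_{d^n}$ also lies in $\mrat_{d^{n}}$, so its orbit cycle has the same degree $(d^n)^3-d^n$, again by \Cref{cor:autfreeChow} or \Cref{thm:degorbit} with no hole corrections.

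\textbf{Step 2: identifying classes.} Because $A_3(\proj^{2d^n+1})\simeq\zahl$ is generated by the class of a linear $3$-space and algebraic and rational equivalence coincide there, equality of degrees gives
\[ [\ol{G\cdot\Phi_n(\varphi)}]_{\mathrm{alg}}=\delta(G,\ol{\mrat}_{d^n}). \]
Here the orbit cycle includes the stabilizer multiplicity, consistent with \cref{eq:orbitcycle}. Taking $\varphi$ generic in $\ol{\mrat}_d$, the point $\Phi_n(\varphi)$ is generic in the irreducible image $\pi_2(\Gamma_{\Phi_n})=\ol{\Phi_n(\mrat_d)}$. Hence $\delta(G,\Phi_n(\ol{\mrat}_d))=\delta(G,\ol{\mrat}_{d^n})$, and \Cref{prop:morphChquo} yields the rational map $\Psi_n$.

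\textbf{Main obstacle.} The delicate point is that the image is a proper subvariety, and its generic point can have a larger stabilizer than a generic point of $\ol{\mrat}_{d^n}$. For example, $\fct{Aut}(\varphi^n)\supseteq\fct{Aut}(\varphi)$ can be strictly larger. The reduced orbit closures therefore have different degrees, and one must be sure the convention of \cref{eq:orbitcycle}, which weights by $\#\stab$, is the one used in defining $\delta(G,\cdot)$ and the pushforward. This is exactly why \Cref{cor:autfreeChow}, with its stabilizer-independent total degree $d^3-d$, is the key input, rather than the reduced degree. I would also check that the orbit-cycle map on the image is defined on a dense open set, which holds because the degree is constant there and finiteness of stabilizers holds on all of $\mrat_{d^n}$.
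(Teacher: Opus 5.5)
Your proposal is correct and follows essentially the same route as the paper. Iterates of maps in $\mrat_d$ lie in $\mrat_{d^n}$ and have finite stabilizer, so the hole-free degree formula gives orbit-cycle degree $d^{3n}-d^n$; this degree determines the class in $A_3$ of projective space, and \Cref{prop:morphChquo} then yields $\Psi_n$. The only cosmetic difference is that you invoke \Cref{cor:autfreeChow} where the paper applies \Cref{thm:degorbit} directly.
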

\begin{proof}
For generic $F\in\mrat_d$, the map $F^n\in\mrat_{d^n}$ has finite stabilizer, so \Cref{thm:degorbit} gives orbit-cycle degree
\[ (d^n)^3-d^n=d^{3n}-d^n, \]
equal to that of a generic orbit cycle in $\crat_{d^n}$. As degree determines three-dimensional cycle classes in projective space,
\[ \delta(G,\Phi_n(\ol{\mrat}_d))=\delta(G,\ol{\mrat}_{d^n}). \]
The well-definedness of $\Psi_n$ now follows from \Cref{prop:morphChquo}.
\end{proof}
Combining \cref{cor:iterdeg} with \cref{prop:ChowPushRat}, we obtain the following.
\begin{corollary} \label{cor:iterorbit} 
    Let $n\geq2$. If $(Z,Z')$ is a point of the graph of $\Psi_n$ and $[F] \in \crat_d \setminus I(d)$ satisfies $[F] \in \supp Z$, then we have $\Phi_n ([F]) \in \supp Z'$.
\end{corollary}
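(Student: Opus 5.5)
This follows directly from \Cref{prop:ChowPushRat} applied to $\Phi=\Phi_n:\crat_d\ratmap\crat_{d^n}$, $X_1=\crat_d$, $X_2=\crat_{d^n}$, with $\Psi=\Psi_n$ the rational map provided by \Cref{cor:iterdeg}. That proposition needs three inputs: $\Phi_n$ is a $G$-equivariant rational map between irreducible projective varieties; it induces $\Psi_n$ on Chow quotients; and the point $[F]$ lies in $\supp Z\setminus I(\Phi_n)$ with finite stabilizer. Equivariance is immediate, since $(\gamma\varphi\gamma^{-1})^n=\gamma\varphi^n\gamma^{-1}$. The induced map is exactly \Cref{cor:iterdeg}. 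For the indeterminacy, I would invoke the cited result of DeMarco recalled in the introduction, which identifies $I(\Phi_n)$ with $I(d)$. The hypothesis $[F]\notin I(d)$ therefore places $[F]$ in the regular locus of $\Phi_n$.

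The remaining input is the finite-stabilizer hypothesis on $[F]$. Here I would argue that any point of $\supp Z$, for $Z\in Y_d$, lies on some irreducible component of $Z$. Each component is a three-dimensional $G$-stable irreducible closed subvariety, because $Z$ is a limit of orbit cycles and the universal cycle is $G$-stable. If $[F]$ has a three-dimensional orbit, its stabilizer is finite and \Cref{prop:ChowPushRat} applies verbatim.

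If $\dim G\cdot[F]<3$, I would avoid the finite-stabilizer hypothesis, since the conclusion only asks for $\Phi_n([F])\in\supp Z'$. The first half of the proof of \Cref{prop:ChowPushRat} gives this directly. Choose a valuation curve in $\Gamma_{\Psi_n}$ with generic point $u_K$ in the generic orbit locus. By \Cref{prop:PTS}, applied to the component $C$ of $\supp Z$ through $[F]$ with $U$ a dense open subset of $C$, and after a translation, we get $\gamma_K$ with $\spe(\gamma_K u_K)\in U$.

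To reach $[F]$ itself, I would instead apply \Cref{lem:curvefib}-type lifting to the Chow fiber $\cat Z$ over the curve. This gives a point $w_K$ of $\supp$ of the generic cycle, i.e. some $\gamma_K u_K$ in the orbit closure, with $\spe w_K=[F]$. This is possible because $\cat Z\to\chow$ is proper and surjective onto the curve, and $([F],Z)$ lies in the special fiber. Then regularity of $\Phi_n$ at $[F]$ and \Cref{prop:indetlocus} give $\spe\Phi_n(w_K)=\Phi_n([F])$. Since $\Phi_n(w_K)$ lies in the support of the generic cycle of $\Psi_n$, closedness of the universal incidence yields $\Phi_n([F])\in\supp Z'$.

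The main obstacle is the point just handled: $w_K$ should be a generic-orbit point, or at least in the domain where $\Phi_n$ pushes orbit supports to orbit supports. If $w_K$ lands in the boundary of the generic orbit closure, I would use that $\Phi_n(\ol{G u_K})\subset\ol{G\Phi_n(u_K)}$ wherever defined, which holds by continuity and equivariance, so the argument survives.
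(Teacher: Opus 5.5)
Your proof is correct in substance, but for orbits of dimension less than three it takes a different route from the paper. The key lifting step in that route also needs a different justification from the one you give.

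\textbf{The paper's route.} The paper does not split into cases. It takes the component $C\subset\supp Z$ through $[F]$ and uses \Cref{thm:Ydatum} together with surjectivity of $\Pi_*$ to write $C=\ol{G\cdot[F_C]}_{\mathrm{red}}$ with a three-dimensional orbit. Since $I(d)$ is closed and $G$-invariant while $[F]\in C\setminus I(d)$, it follows that $[F_C]\notin I(d)$. The paper then applies \Cref{prop:ChowPushRat} to $[F_C]$ and reaches $[F]$ by equivariance and continuity of $\Phi_n$ on $C\setminus I(d)$.

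\textbf{Your lifting step.} You instead lift $[F]$ itself into the support of the generic cycle. Such a lift exists, but not for the reason you state. Properness and surjectivity of the incidence over the curve do not rule out special-fiber points lying outside the closure of the generic fiber. Moreover, \Cref{lem:curvefib} explicitly does not prescribe the special point. The correct reason is the specialization compatibility in the proof of \Cref{prop:PTS}. Write $\mathcal V_i$ for the flat closures of the generic components. Each $(\mathcal V_i)_s$ is pure of dimension three, so $\supp Z=\bigcup_i(\mathcal V_i)_s$ set-theoretically. Cutting the $\mathcal V_i$ containing $[F]$ by general hyperplanes through $[F]$ itself, rather than through a general point of $C$, leaves $[F]$ isolated in the special slice. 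Hence $[F]$ lies on a horizontal curve, which gives $w_K$ after a finite extension.

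\textbf{With this repair.} The rest of your argument then goes through:
\begin{enumerate}
\item regularity of $\Phi_n$ on a neighbourhood of $[F]$ gives $w_K\notin I(d)$ and $\spe\Phi_n(w_K)=\Phi_n([F])$;
\item continuity and equivariance put $\Phi_n(w_K)$ in $\ol{G\cdot\Phi_n(u_K)}$;
\item closedness of the universal incidence gives $\Phi_n([F])\in\supp Z'$.
\end{enumerate}
This handles every $[F]$ at once. So your case split, and your preliminary application of \Cref{prop:PTS} to a dense open $U\subset C$, are superfluous.

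\textbf{Comparison.} Your route never needs to know that components of $\supp Z$ have dense three-dimensional orbits, so it bypasses the structure theorem. The price is a pointwise strengthening of \Cref{prop:PTS}. The paper's route needs only the general-point form of \Cref{prop:PTS}, packaged in \Cref{prop:ChowPushRat}. It relies instead on \Cref{thm:Ydatum} to reduce to a point with finite stabilizer.
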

\begin{proof}
Choose an irreducible component $C\subset\supp Z$ containing $[F]$. By \Cref{thm:Ydatum} and surjectivity of $\Pi_*$, it has a dense three-dimensional orbit $G\cdot[F_C]$. Since $I(d)$ is closed and $G$-invariant, $[F_C]\notin I(d)$. Hence \Cref{prop:ChowPushRat} gives
\[ \Phi_n([F_C])\in\supp Z'. \]
By equivariance, density of $G\cdot[F_C]$ in $C$, and regularity on $C\setminus I(d)$, the closed set $\supp Z'$ contains $\Phi_n(C\setminus I(d))$, and in particular $\Phi_n([F])$.
\end{proof}

\section{Examples}
\label{sec:examples}
\subsection{The case $d = 2$}
There are two combinatorial types of weighted trees, up to relabeling and suppression of valence-two vertices. Each tree has four leaves $P_1,\ldots,P_4$, colored $+,+,+,-$, respectively.
\[ \begin{tikzpicture}
     \node(P1) at (0,0) {$P_1^+$};
     \node(P2) at (1,0) {$P_2^+$};
     \node(P3) at (2,0) {$P_3^+$};
     \node(P4) at (3,0) {$P_4^-$};
     \node(v) at (1.5,-1) {$v$};
     \draw(P1) -- (v);
     \draw(P2) -- (v);
     \draw(P3) -- (v);
     \draw(P4) -- (v);
\end{tikzpicture}
\begin{tikzpicture}
     \node(P1) at (0,0) {$P_1^+$};
     \node(P2) at (1,0) {$P_2^+$};
     \node(P3) at (2,0) {$P_3^+$};
     \node(P4) at (3,0) {$P_4^-$};
     \node(v1) at (1,-1) {$v_1$};
     \node(v2) at (2,-1) {$v_2$};
     \draw(P1) -- (v1);
     \draw(P2) -- (v1);
     \draw(P3) -- (v2);
     \draw(P4) -- (v2);
     \draw(v1) -- (v2);
\end{tikzpicture}
\]
The orderings are given on each edge by counting colorings of leaves on both sides.
\[
\begin{tikzpicture}
    \node(P) at (0,3) {$+$};
    \node(N) at (0,-3) {$-$};
    \draw[->](N) -- (P);
\end{tikzpicture}
\begin{tikzpicture}
     \node(P1) at (-2,0.4) {$P_1^+$};
     \node(P2) at (-2,-0.4) {$P_2^+$};
     \node(P3) at (2,0) {$P_3^+$};
     \node(P4) at (0,-3) {$P_4^-$};
     \node(v) at (0,0) {$v$};
     \edgecap{P1}{v}{$+$}{$=\ \ $}{$++-$}{above}
     \edgecap{P2}{v}{}{$=$}{}{below}
     \edgecap{P3}{v}{}{$=$}{}{above}
     \edgecap{v}{P4}{$+++$}{$>$}{$-$}{above}
     \node(T1) at (0,-4) {$T_1$};
\end{tikzpicture}
\begin{tikzpicture}
     \node(P1) at (-2,2) {$P_1^+$};
     \node(P2) at (2,2) {$P_2^+$};
     \node(P3) at (2,0) {$P_3^+$};
     \node(P4) at (0,-2) {$P_4^-$};
     \node(v1) at (0,2) {$v_1$};
     \node(v2) at (0,0) {$v_2$};
     \edgecap{P1}{v1}{$+$}{$=\ \ $}{$++-$}{above}
     \edgecap{P2}{v1}{}{$=$}{}{above}
     \edgecap{v1}{v2}{$++$}{$>$}{$+-$}{above}
     \edgecap{P3}{v2}{}{$=$}{}{above}
     \edgecap{v2}{P4}{}{$>$}{}{above}
     \node(T2) at (0,-3) {$T_2$};
\end{tikzpicture}
\]
The zero set of the contour may occur at the indicated points $(z_i)$ and vertices $v,v_i$. This gives three possibilities for $T_1$ and five for $T_2$.
\[
\begin{tikzpicture}
    \node(P) at (0,3) {$+$};
    \node(N) at (0,-3) {$-$};
    \draw[->](N) -- (P);
\end{tikzpicture}
\begin{tikzpicture}
     \node(P1) at (-2,0.4) {$P_1^+$};
     \node(P2) at (-2,-0.4) {$P_2^+$};
     \node(P3) at (2,0) {$P_3^+$};
     \node(P4) at (0,-3) {$P_4^-$};
     \node(v) at (0,0) {$v$};
     \node(z2) at (0,-1.5) {$(z_2)$};
     \node(z1) at (0,1.5) {$(z_1)$};
     \edgecap{P1}{v}{}{$=$}{}{above}
     \edgecap{P2}{v}{}{$=$}{}{below}
     \draw(P3) -- (v);
     \draw(P4) -- (z2);
     \draw(z2) -- (v);
     \node(T1) at (0,-4) {$T_1$};
\end{tikzpicture}
\begin{tikzpicture}
     \node(P1) at (-2,2) {$P_1^+$};
     \node(P2) at (2,2) {$P_2^+$};
     \node(P3) at (2,0) {$P_3^+$};
     \node(P4) at (0,-2) {$P_4^-$};
     \node(v1) at (0,2) {$v_1$};
     \node(v2) at (0,0) {$v_2$};
     \node(z1) at (0,3) {$(z_1)$};
     \node(z2) at (0,1) {$(z_2)$};
     \node(z3) at (0,-1) {$(z_3)$};     
     \draw(P1) -- (v1);
     \draw(P2) -- (v1);
     \draw(P3) -- (v2);
     \draw(P4) -- (z3); \draw(z3) -- (v2);
     \draw(v1) -- (z2);
     \draw(z2) -- (v2);
     \node(T2) at (0,-3) {$T_2$};
\end{tikzpicture}
\]
For each weighted tree with a contour, the associated varieties are
\begin{align*}
\begin{array}{ccc|cc}
    \text{tree} & \text{configuration variety} & \text{\# markings }& \text{contour} & \text{aspect variety} \\
    &  & & (z_1) & \fct{pt} \\
    T_1 & M_{0,4} \simeq \pone \setminus \{ 0, 1,\infty \} & 1 & v & \gmult \simeq \pone \setminus \{ 0,\infty  \} \\
   & & & z_2 & \fct{pt} \\
   \hline 
   & & & (z_1) & \fct{pt} \\
     & & & v_1 & \gmult \\
    T_2 & (M_{0,3} \times M_{0,3}) \simeq \fct{pt} & 3 & z_2 & \fct{pt} \\
     & & & v_2 & \gmult \\
     & & & z_3 & \fct{pt} 
\end{array}
\end{align*}
Put $\rho:=(\pi_B)_*:X_2\to\ol M_{0,4}$ and
\[ X_{2,T_1}:=\rho^{-1}(M_{0,4}). \]
The table parametrizes $X_{2,T_1}$ by $M_{0,4}\times\pone$ and describes two projective lines meeting at one point over each of the three boundary points. We now compute their global gluing.

Normalize the three $+$ markings to $(P_1,P_2,P_3)=(0,1,\infty)$ and write $P_4=t$. On the dense open locus where both summands are nonzero, a representative is
\[ [f_+:f_-]=[x_0x_1(x_1-x_0):u(x_1-tx_0)],
\qquad t\notin\{0,1,\infty\},\quad u\in\gmult. \]
This identifies $\ol M_{0,4}$ with $\pone_t$.
The multiplier formulas \cref{eq:lFz} give
\[ \lambda_1=\frac{tu-2}{tu+1},\qquad
\lambda_2=\frac{(t-1)u+2}{(t-1)u-1},\qquad
\lambda_3=\frac{u+2}{u-1}. \]
By \Cref{cor:markChowQuotMultiplier}, the following are morphisms from $X_2$ to $\pone$:
\[ u=\frac{\ol\lambda_3+2}{\ol\lambda_3-1},\qquad
v=\frac{\ol\lambda_1+2}{1-\ol\lambda_1},\qquad
w=\frac{\ol\lambda_2+2}{\ol\lambda_2-1}. \]
Here the fractions denote M\"obius transformations of the multiplier projective lines, so they are defined also at infinite values. On the dense open locus, $v=tu$ and $w=(t-1)u$.

Let $S$ be the closure of the graph
\[ (t,u)\longmapsto(t,u,tu,(t-1)u)
\quad\text{in }\pone_t\times\pone_u\times\pone_v\times\pone_w. \]
The two product maps have base points
\[ (t,u)=(0,\infty),\ (1,\infty),\ (\infty,0). \]
Near $(0,\infty)$, put $a=1/u$; then $v=t/a$, while $w=(t-1)/a$ is regular as a $\pone$-valued map. Thus the graph is the blowup of $(t,a)$. Near $(1,\infty)$ the same calculation uses $h=t-1$ and $w=h/a$. Near $(\infty,0)$, put $s=1/t$; then
\[ v=u/s,\qquad w=(1-s)v, \]
so the single blowup of $(s,u)$ resolves both products. These centers are disjoint, and away from them both products are morphisms. Consequently
\[ S\simeq\operatorname{Bl}_{\{(0,\infty),(1,\infty),(\infty,0)\}}
       (\pone_t\times\pone_u). \]
For completeness, charts at the nodes of the three boundary fibers are
\[
\begin{array}{c|ccc}
\text{boundary}&h&\alpha&\beta\\ \hline
t=0&t&1/u&v\\
t=1&t-1&1/u&w\\
t=\infty&1/t&1/v&u
\end{array}
\qquad h=\alpha\beta.
\]
Thus each boundary fiber is locally $\alpha\beta=0$. The complementary charts have coordinates $(h,\beta^{-1})$ or $(h,\alpha^{-1})$, with transition relations
\[ \alpha=h\beta^{-1},\qquad \beta=h\alpha^{-1}, \]
and complete the two components to projective lines.

The morphism $(\rho,u,v,w)$ has image $S$, since it is proper and its image contains the dense graph. To check that it identifies the boundary points correctly, write $r=1/\alpha$ and $b=\beta$. The two stable vertices over a boundary point carry the aspect parameters $r$ and $b$, up to fixed nonzero factors. This can be seen directly by changing the dynamical coordinate $z=x_1/x_0$. For $t=h\to0$ and $z=h\zeta$, the transformed pair is
\[ [x_0x_1(hx_1-x_0):hu(x_1-x_0)], \]
so the parameters are $(r,b)=(u,v)$. For $t=1+h\to1$ and $z=1+h\zeta$, it is
\[ [x_0x_1(x_0+hx_1):hu(x_1-x_0)], \]
giving $(r,b)=(u,w)$. Finally, for $t=1/h\to\infty$, inversion $z=1/\zeta$ changes the pair to
\[ [x_0x_1(x_1-x_0):-v(x_1-hx_0)], \]
and then $\zeta=h\eta$ gives
\[ [x_0x_1(hx_1-x_0):-u(x_1-x_0)]. \]
Here $(r,b)=(v,u)$, with a minus sign in each aspect parameter. All displayed pairs are understood up to a common nonzero scalar.

Label the two-plus vertex by $v_1$ and the plus-minus vertex by $v_2$. At $h=0$, the relation $b=hr$ has the following five possibilities, corresponding exactly to the five contours for $T_2$ above:
\[
\begin{array}{c|ccccc}
\text{contour}&(z_1)&v_1&z_2&v_2&z_3\\ \hline
(r,b)&(0,0)&(\gmult,0)&(\infty,0)&(\infty,\gmult)&(\infty,\infty).
\end{array}
\]
The nonzero aspect parameter determines the Chow datum on each of the two open components. At their intersection and at the two outer endpoints there is no remaining aspect parameter. This also accounts for the valence-two zero-contour vertex when the contour lies in an edge or on the ray toward the negative marking. Over $M_{0,4}$, the three possibilities for $T_1$ are likewise distinguished by $u=0$, $u\in\gmult$, and $u=\infty$.

It follows from \Cref{thm:Ydatum} that $(\rho,u,v,w):X_2\to S$ is bijective on geometric points. Being proper, it gives the claimed homeomorphism with the three-point blowup. For $k=\cpx$, it is also a homeomorphism for the complex analytic topology, since the source is compact and the target is Hausdorff.

\bibliographystyle{amsalpha}
\bibliography{refs}

@misc{OpenAI2026ChatGPT,
  author       = {{OpenAI}},
  title        = {{ChatGPT} ({GPT-5.6})},
  year         = {2026},
  howpublished = {\url{https://chatgpt.com/}},
  note         = {AI assistant; accessed September 6, 2026}
}

@misc{OpenAI2026Prism,
  author       = {{OpenAI}},
  title        = {{Prism}},
  year         = {2026},
  howpublished = {\url{https://openai.com/prism/}},
  note         = {AI-assisted research and {\LaTeX} writing workspace; accessed September 6, 2026}
}

@misc{stacks-project,
  author       = {The {Stacks project authors}},
  title        = {{The Stacks Project}},
  howpublished = {\url{https://stacks.math.columbia.edu}},
  year         = {2026}
}

@incollection{DalbecSturmfels1995ChowForms,
  author    = {Dalbec, John and Sturmfels, Bernd},
  title     = {Introduction to {Chow} Forms},
  booktitle = {Invariant Methods in Discrete and Computational Geometry},
  editor    = {White, Neil L.},
  publisher = {Springer},
  address   = {Dordrecht},
  year      = {1995},
  pages     = {37--58},
  doi       = {10.1007/978-94-015-8402-9_2}
}

@phdthesis{west2015moduli,
  author = {West, Lloyd William},
  title = {The moduli space of rational maps},
  school = {City University of New York},
  year = {2015},
  pages = 88,
  isbn = {978-1339-02142-3}
}

@article{silverman1996p1moduli,
  title={The space of rational maps on {$\mathbb{P}^1$}},
  author={Silverman, Joseph H},
  journal={Duke Mathematical Journal},
  volume={94},
  number={1},
  year={1998},
  pages={41--77},
  publisher={Duke University Press}
}

@article{schmitt2017cptf_stmaps,
  title={A compactification of the moduli space of self-maps of $\mathbb{CP}^1$ via stable maps},
  author={Schmitt, Johannes},
  journal={Conformal Geometry and Dynamics of the American Mathematical Society},
  volume={21},
  number={11},
  pages={273--318},
  year={2017}
}

@book{gkz1994disc-res-mult,
    author = {Gel'fand, I. M. and Kapranov, M. M. and Zelevinsky, A. V.},
    title = {Discriminants, resultants, and multidimensional determinants},
    series={Mathematics: Theory \& Applications},
    year={1994},
    publisher={Birkhäuser Boston, Inc., Boston, MA},
    pages={x+523},
    isbn={0-8176-3660-9}
}

@article {kapranov1993VeroneseAndM0n,
    AUTHOR = {Kapranov, M. M.},
     TITLE = {Veronese curves and {G}rothendieck-{K}nudsen moduli space
              {$\overline M_{0,n}$}},
   JOURNAL = {J. Algebraic Geom.},
  FJOURNAL = {Journal of Algebraic Geometry},
    VOLUME = {2},
      YEAR = {1993},
    NUMBER = {2},
     PAGES = {239--262},
      ISSN = {1056-3911,1534-7486},
   MRCLASS = {14H10 (14C05 14D99)},
  MRNUMBER = {1203685},
MRREVIEWER = {R.\ F.\ Lax},
}

@InCollection{kapranov1993ChowQuotGrassI,
 Author = {Kapranov, M. M.},
 Title = {Chow quotients of {Grassmannians}. {I}},
 BookTitle = {I. M. Gelfand seminar. Part 2: Papers of the Gelfand seminar in functional analysis held at Moscow University, Russia, September 1993},
 ISBN = {0-8218-4119-X},
 Pages = {29--110},
 Year = {1993},
 Publisher = {Providence, RI: American Mathematical Society},
 Language = {English},
 zbMATH = {475119},
 Zbl = {0811.14043}
}

@article{fujimura-taniguchi2013rational,
  title={Rational functions with nodes},
  author={Fujimura, Masayo and Taniguchi, Masahiko},
  year={2013},
  journal={Journal of Analysis},
  volume = {21},
  pages = {85--100},
}

@article {kiwi2015rescaling,
    AUTHOR = {Kiwi, Jan},
     TITLE = {Rescaling limits of complex rational maps},
   JOURNAL = {Duke Math. J.},
  FJOURNAL = {Duke Mathematical Journal},
    VOLUME = {164},
      YEAR = {2015},
    NUMBER = {7},
     PAGES = {1437--1470},
      ISSN = {0012-7094,1547-7398},
   MRCLASS = {37F45 (12J25 26E30 32G15 32H50 37P20 37P50)},
  MRNUMBER = {3347319},
MRREVIEWER = {Romain\ Dujardin},
       DOI = {10.1215/00127094-2916431},
       URL = {https://doi.org/10.1215/00127094-2916431},
}

@ARTICLE{kiwi-nie2023indet-loci,
    author = "Jan Kiwi and Hongming Nie",
     title = "Indeterminacy loci of iterate maps in moduli space",
   journal = "Indiana Univ. Math. J.",
  fjournal = "Indiana University Mathematics Journal",
    volume = 72,
      year = 2023,
     number = 3,
     pages = "969--1026",
      issn = "0022-2518",
     coden = "IUMJAB",
   mrclass = "",
}

@Article{YiHu2005ChowQuotTop,
 Author = {Hu, Yi},
 Title = {Topological aspects of {Chow} quotients},
 FJournal = {Journal of Differential Geometry},
 Journal = {J. Differ. Geom.},
 ISSN = {0022-040X},
 Volume = {69},
 Number = {3},
 Pages = {399--440},
 Year = {2005},
 Language = {English},
 DOI = {10.4310/jdg/1122493996},
 zbMATH = {5004287},
 Zbl = {1087.14032}
}

@article {Kazar1987formula,
    AUTHOR = {Kazarnovski\u{i}, B. Ya.},
     TITLE = {Newton polyhedra and {B}ezout's formula for matrix functions
              of finite-dimensional representations},
   JOURNAL = {Funktsional. Anal. i Prilozhen.},
  FJOURNAL = {Akademiya Nauk SSSR. Funktsional\cprime ny\u i\ Analiz i ego
              Prilozheniya},
    VOLUME = {21},
      YEAR = {1987},
    NUMBER = {4},
     PAGES = {73--74},
      ISSN = {0374-1990},
   MRCLASS = {22E45 (14L32)},
  MRNUMBER = {925078},
MRREVIEWER = {V.\ L.\ Popov},
}

@article {KaverKhovanskii2010KazarThmbyOkounkovBody,
    AUTHOR = {Kaveh, Kiumars and Khovanskii, A. G.},
     TITLE = {Moment polytopes, semigroup of representations and
              {K}azarnovskii's theorem},
   JOURNAL = {J. Fixed Point Theory Appl.},
  FJOURNAL = {Journal of Fixed Point Theory and Applications},
    VOLUME = {7},
      YEAR = {2010},
    NUMBER = {2},
     PAGES = {401--417},
      ISSN = {1661-7738,1661-7746},
   MRCLASS = {20G05 (05E10)},
  MRNUMBER = {2729398},
MRREVIEWER = {Anthony\ Henderson},
       DOI = {10.1007/s11784-010-0027-7},
       URL = {https://doi.org/10.1007/s11784-010-0027-7},
}

@book {Kollar1996RatCurvesOnAlgVar,
    AUTHOR = {Koll\'ar, J\'anos},
     TITLE = {Rational curves on algebraic varieties},
    SERIES = {Ergebnisse der Mathematik und ihrer Grenzgebiete. 3. Folge. A
              Series of Modern Surveys in Mathematics [Results in
              Mathematics and Related Areas. 3rd Series. A Series of Modern
              Surveys in Mathematics]},
    VOLUME = {32},
 PUBLISHER = {Springer-Verlag, Berlin},
      YEAR = {1996},
     PAGES = {viii+320},
      ISBN = {3-540-60168-6},
   MRCLASS = {14-02 (14C05 14E05 14F17 14J45)},
  MRNUMBER = {1440180},
MRREVIEWER = {Yuri\ G.\ Prokhorov},
       DOI = {10.1007/978-3-662-03276-3},
       URL = {https://doi.org/10.1007/978-3-662-03276-3},
}

@article{Rumely17,
	author = {Rumely, Robert},
	doi = {10.2140/ant.2017.11.841},
	fjournal = {Algebra \& Number Theory},
	issn = {1937-0652},
	journal = {Algebra Number Theory},
	mrclass = {37P50 (11S82 37P05)},
	mrnumber = {3665639},
	mrreviewer = {Liang-Chung Hsia},
	number = {4},
	pages = {841--884},
	title = {A new equivariant in nonarchimedean dynamics},
	url = {https://doi.org/10.2140/ant.2017.11.841},
	volume = {11},
	year = {2017}}

@book {BakerRumely2010PotTheoryDynOnBerkoP1,
    AUTHOR = {Baker, Matthew and Rumely, Robert},
     TITLE = {Potential theory and dynamics on the {B}erkovich projective
              line},
    SERIES = {Mathematical Surveys and Monographs},
    VOLUME = {159},
 PUBLISHER = {American Mathematical Society, Providence, RI},
      YEAR = {2010},
     PAGES = {xxxiv+428},
      ISBN = {978-0-8218-4924-8},
   MRCLASS = {37P50 (14G20 14G22 31C15 31C45 37P40)},
  MRNUMBER = {2599526},
MRREVIEWER = {Charles\ Favre},
       DOI = {10.1090/surv/159},
       URL = {https://doi.org/10.1090/surv/159},
}

@article {Arfeux2017DynTreeSphere,
    AUTHOR = {Arfeux, Matthieu},
     TITLE = {Dynamics on trees of spheres},
   JOURNAL = {J. Lond. Math. Soc. (2)},
  FJOURNAL = {Journal of the London Mathematical Society. Second Series},
    VOLUME = {95},
      YEAR = {2017},
    NUMBER = {1},
     PAGES = {177--202},
      ISSN = {0024-6107,1469-7750},
   MRCLASS = {37F20 (30D05 30F60 32G15)},
  MRNUMBER = {3653089},
MRREVIEWER = {Kevin\ M.\ Pilgrim},
       DOI = {10.1112/jlms.12016},
       URL = {https://doi.org/10.1112/jlms.12016},
}

@article {DeMarcoFaber2014DegenCpxDynSys,
    AUTHOR = {DeMarco, Laura and Faber, Xander},
     TITLE = {Degenerations of complex dynamical systems},
   JOURNAL = {Forum Math. Sigma},
  FJOURNAL = {Forum of Mathematics. Sigma},
    VOLUME = {2},
      YEAR = {2014},
     PAGES = {Paper No. e6, 36},
      ISSN = {2050-5094},
   MRCLASS = {37F10 (37F45 37P50)},
  MRNUMBER = {3264250},
MRREVIEWER = {Xavier\ Jarque},
       DOI = {10.1017/fms.2014.8},
       URL = {https://doi.org/10.1017/fms.2014.8},
}

@article {Arfeux2017CptfTreesOfSpheresCovers,
    AUTHOR = {Arfeux, Matthieu},
     TITLE = {Compactification and trees of spheres covers},
   JOURNAL = {Conform. Geom. Dyn.},
  FJOURNAL = {Conformal Geometry and Dynamics. An Electronic Journal of the
              American Mathematical Society},
    VOLUME = {21},
      YEAR = {2017},
     PAGES = {225--246},
      ISSN = {1088-4173},
   MRCLASS = {37F20 (30D05 30F60 32G15)},
  MRNUMBER = {3645509},
MRREVIEWER = {Kevin\ M.\ Pilgrim},
       DOI = {10.1090/ecgd/309},
       URL = {https://doi.org/10.1090/ecgd/309},
}

@article {DoyleSilverman2020ModuliSpDynSysPortraits,
    AUTHOR = {Doyle, John R. and Silverman, Joseph H.},
     TITLE = {Moduli spaces for dynamical systems with portraits},
   JOURNAL = {Illinois J. Math.},
  FJOURNAL = {Illinois Journal of Mathematics},
    VOLUME = {64},
      YEAR = {2020},
    NUMBER = {3},
     PAGES = {375--465},
      ISSN = {0019-2082,1945-6581},
   MRCLASS = {37P45 (37P15)},
  MRNUMBER = {4132597},
MRREVIEWER = {Christian\ Lehn},
       DOI = {10.1215/00192082-8642523},
       URL = {https://doi.org/10.1215/00192082-8642523},
}

@article {DeMarco2005IterAtBoundary,
    AUTHOR = {DeMarco, Laura},
     TITLE = {Iteration at the boundary of the space of rational maps},
   JOURNAL = {Duke Math. J.},
  FJOURNAL = {Duke Mathematical Journal},
    VOLUME = {130},
      YEAR = {2005},
    NUMBER = {1},
     PAGES = {169--197},
      ISSN = {0012-7094,1547-7398},
   MRCLASS = {37F45 (32H50 37F10 37F35 37F40)},
  MRNUMBER = {2176550},
MRREVIEWER = {Kevin\ M.\ Pilgrim},
       DOI = {10.1215/S0012-7094-05-13015-0},
       URL = {https://doi.org/10.1215/S0012-7094-05-13015-0},
}

@article{FavreGong2025non,
  title={Non-Archimedean Techniques and Dynamical Degenerations},
  author={Favre, Charles and Gong, Chen},
  journal={Peking Mathematical Journal},
  pages={1--63},
  year={2025},
  publisher={Springer},
  doi={10.1007/s42543-025-00100-7}
}

@misc{Roy2025ActionGroupeCptfHybarXiv,
      title={Action de groupe alg\'ebrique sur la compactification hybride},
      author={Alexandre Roy},
      year={2025},
      eprint={2512.00201},
      archivePrefix={arXiv},
      primaryClass={math.AG},
      note={arXiv:2512.00201v2 [math.AG], 19 December 2025},
      url={https://arxiv.org/abs/2512.00201},
}

@article {Deopurkar2026EquivClassOrbCloGL2,
    AUTHOR = {Deopurkar, Anand},
     TITLE = {Equivariant classes of orbit closures in {${\rm
              GL}(2)$}-representations},
   JOURNAL = {Selecta Math. (N.S.)},
  FJOURNAL = {Selecta Mathematica. New Series},
    VOLUME = {32},
      YEAR = {2026},
    NUMBER = {3},
     PAGES = {Paper No. 60, 40},
      ISSN = {1022-1824,1420-9020},
   MRCLASS = {14C15 (14N10)},
  MRNUMBER = {5089005},
       DOI = {10.1007/s00029-026-01166-6},
       URL = {https://doi.org/10.1007/s00029-026-01166-6},
}

\end{document}